\newcommand{\R}{{{\mathbb R}}}
\newcommand{\re}{{{\mathfrak{R}}}}
\newcommand{\im}{{{\mathfrak{I}}}}
\newcommand{\T}{{{\mathbb T}}}
\newcommand{\C}{{{\mathbb C}}}
\newcommand{\z}{{{z_1}}}
\newcommand{\s}{{{L^{2}(S)}}}
\newcommand{\zz}{{{z_2}}}
\newcommand{\F}{{{\mathcal{F}(z)}}}
\providecommand{\abs}[1]{\lvert#1\rvert}
\providecommand{\norm}[1]{\lVert#1\rVert}
\providecommand{\de}[1]{\partial^{4}_{\alpha}z#1}
\providecommand{\detil}[1]{\partial^{4}_{\alpha}\tilde{z}#1}
\providecommand{\dee}[1]{\partial^{5}_{\alpha}z#1}
\providecommand{\est}[1]{\exp C(\norm{\F}^{2}_{L^{\infty}(S)}+\norm{z}^{2}_{H^{#1}(S)})}
\providecommand{\esttil}[1]{\exp C(\norm{\mathcal{F}(\tilde{z})}^{2}_{L^{\infty}(S)}+\norm{\tilde{z}}^{2}_{H^{#1}(S)})}
\theoremstyle{plain}
\newtheorem{lem}{Lemma}[section]
\newtheorem{thm}{Theorem}[section]
\newtheorem{nota}{Remark}[section]
\theoremstyle{definition}
\title{Non-splat singularity for the one-phase Muskat problem}
\author{Diego C\'ordoba and Tania Pernas-Casta\~{n}o}
\begin{document}
\maketitle
%\quad \newpage
%\tableofcontents
%\newpage \quad \newpage

\begin{abstract}
For the water waves equations, the existence of splat singularities has been shown in \cite{finite}, i.e., the interface self-intersects along an arc in finite time. The aim of this paper is to show the absence of splat singularities for the incompressible fluid dynamics in porous media.
\end{abstract}

%Introducción
 \section{Introduction}

	The Muskat problem \cite{muskat} models the evolution of the interface between two fluids of different characteristics in porous media, where the velocity of the fluid is given by Darcy's law:
	
	\begin{displaymath}
	\frac{\mu}{\kappa}u=-\nabla p-(0,g\rho)
	\end{displaymath}
	where $(x,t)\in\R^{2}\times\R^{+}$, $u=(u_{1}(x,t),u_{2}(x,t))$ is the incompressible velocity (i.e. $\nabla\cdot u=0$), $p=p(x,t)$ is the pressure, $\mu=\mu(x,t)$ is the dynamic viscosity, $\kappa$ is the permeability of the isotropic medium, $\rho=\rho(x,t)$ is the liquid density and $g$ is the acceleration due to gravity. The free boundary is caused by the discontinuity between the densities and viscosities of the fluids; the quantities $(\mu,\rho)$ are defined by
	\begin{displaymath}
(\mu,\rho)(x_{1},x_{2},t) := \left\{ \begin{array}{ll}
(\mu^{1},\rho^{1}) & \textrm{$x\in\Omega^{1}(t)$}\\
(\mu^{2},\rho^{2}) & \textrm{$x\in\Omega^{2}(t)=\R^{2}-\Omega^{1}(t)$}
\end{array} \right.
\end{displaymath}
where $\mu^{1}$, $\rho^{1}$, $\mu^{2}$ and $\rho^{2}$ are constants.

\begin{figure}[htb]
\centering
\subfigure[Splash singularity]{\includegraphics[width=62mm]{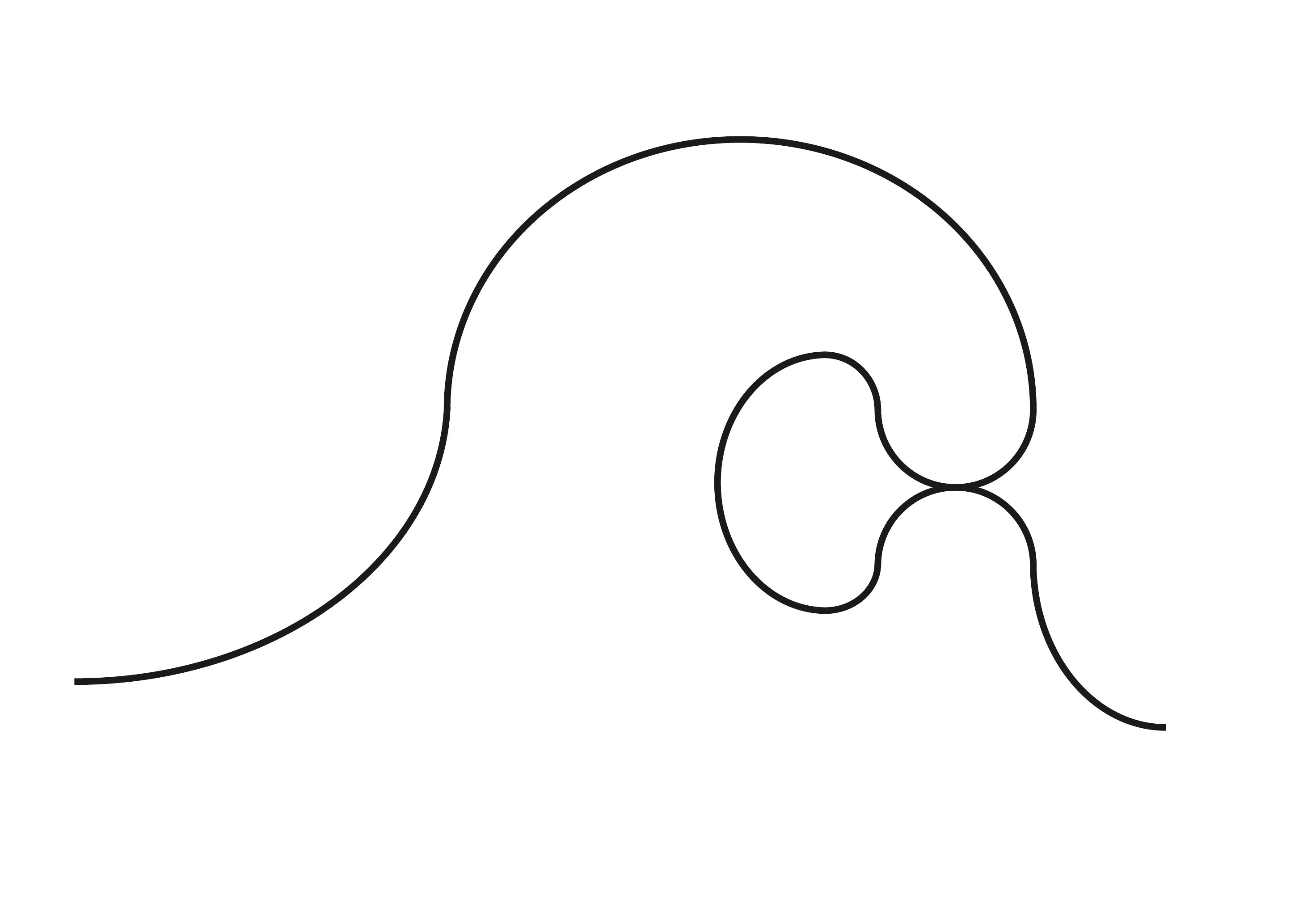}}
\subfigure[Splat singularity]{\includegraphics[width=62mm]{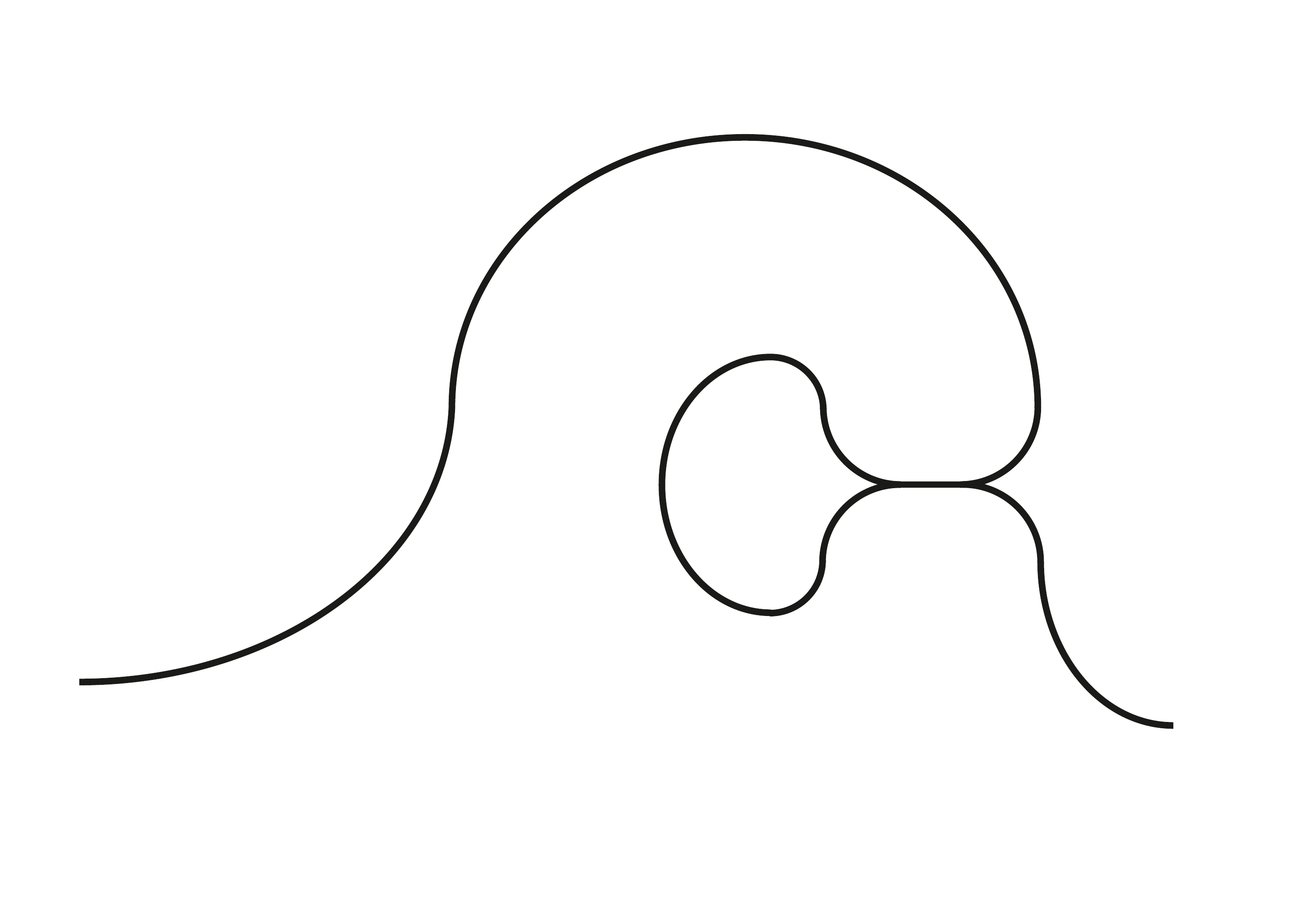}}
\caption{Finite time singularities} \label{fig:singu}
\end{figure}

	We will only study one of the two types of finite time singularities shown for water waves in \cite{finite}, the splash and splat singularities.
	The splash-type singularity (Figure \ref{fig:singu}(a)) corresponds to the case where the fluid interface self-intersects at a single point. This kind of singularity also occurs for the Muskat problem as proved in \cite{splash}.
	
	In this paper, we will focus on the splat-type singularity (Figure \ref{fig:singu}(b)). This singularity is a variation of the former in which the fluid interface self-intersects along an arc. This scenario has been shown to arise for the incompressible Euler equations in the water waves form, see \cite{finite}, which considers the evolution of the free boundary of a water region in vacuum and irrotational velocity. In \cite{3d}, these singularities have also been shown to exist for the case with vorticity.

For the Muskat problem, splash singularity cannot be developed in the case in which $\mu^{1}=\mu^{2}$ and $\rho^{1}\neq\rho^{2}$, for more details see \cite{absence}. For similar results about two-fluids interfaces see \cite{nosplash}, \cite{impossi}. However, the splash can be achieved with $\mu^{1}=\rho^{1}=0$ where $\R^{2}-\Omega(t)$ corresponds to the dry region (see \cite{splash}). 

The aim of this work is to show the absence of splat singularities in the case of an interface between an incompressible irrotational fluid and a dry region in porous media. Thus, $\mu^{1}=\rho^{1}=0$, i.e.,
\begin{displaymath}
(\mu,\rho)(x_{1},x_{2},t) := \left\{ \begin{array}{ll}
(\mu^{2},\rho^{2}) & \textrm{$x\in\Omega(t)$}\\
(0,0) & \textrm{$x\in\R^{2}-\Omega(t)$.}
\end{array} \right.
\end{displaymath}

Let the free boundary be parametrized by
\begin{displaymath}
\partial\Omega=\lbrace z(\alpha,t)=(\z(\alpha,t),\zz(\alpha,t)):\alpha\in\R\rbrace
\end{displaymath}
so that the periodic condition 
\begin{displaymath}
(\z(\alpha+2k\pi,t),\zz(\alpha+2k\pi,t))=(\z(\alpha,t)+2k\pi,\zz(\alpha,t))
\end{displaymath}
holds with initial data $z(\alpha,0)=z_{0}(\alpha)$.

From Darcy's law, we deduce that the fluid is irrotational, i.e. $\omega=\nabla\times u=0$, in the interior of the domain $\Omega$. Therefore, the vorticity is concentrated on the free boundary $z(\alpha,t)$ by a Dirac distribution as follows:
\begin{displaymath}
\omega(x,t)=\nabla^{\bot}\cdot u(x,t)=\varpi(\alpha,t)\delta(x-z(\alpha,t))
\end{displaymath}
where $\varpi(\alpha,t)$ represents the vorticity strength.

The interface $z(\alpha,t)$ evolves with an incompressible velocity field satisfying the Biot-Savart law, which can be explicitly computed and is given by the Birkhoff-Rott integral of the amplitude $\varpi$ along the interface $z(\alpha,t)$:
\begin{equation}
\label{br}
BR(z,\varpi)(\alpha,t)=\frac{1}{2\pi}PV\int_{\R}{\frac{(z(\alpha,t)-z(\beta,t))^{\bot}}{\abs{z(\alpha,t)-z(\beta,t)}^{2}}\varpi(\beta,t)d\beta}.
\end{equation}
We can subtract any term in the tangential direction to the curve in the velocity field without modifying the geometric evolution of the curve
\begin{equation}
\label{zt}
z_{t}(\alpha,t)=BR(z,\varpi)(\alpha,t)+c(\alpha,t)\partial_{\alpha}z(\alpha,t).
\end{equation}

A wise choice of $c(\alpha,t)$, namely
\begin{align}
\label{c}
c(\alpha,t)=&\frac{\alpha+\pi}{2\pi}\int_{\T}{\frac{\partial_{\beta}z(\beta,t)}{\abs{\partial_{\beta}z(\beta,t)}^{2}}\cdot\partial_{\beta}BR(z,\varpi)(\beta,t)d\beta}\nonumber\\
&-\int_{-\pi}^{\alpha}\frac{\partial_{\beta}z(\beta,t)}{\abs{\partial_{\beta}z(\beta,t)}^{2}}\cdot\partial_{\beta}BR(z,\varpi)(\beta,t)d\beta
\end{align}
allows us to remove the dependence on $\alpha$ from the length of the tangent vector to $z(\alpha,t)$ (for more details see \cite{hele}):
\begin{displaymath}
\abs{\partial_{\alpha}z(\alpha,t)}^{2}=A(t).
\end{displaymath}

We can close the system using Darcy's law and taking the dot product with $\partial_{\alpha}z(\alpha,t)$. It is easy to relate $\varpi$ and the free boundary by (see \cite{hele}):
\begin{equation}
\label{vor}
\varpi(\alpha,t)=-2BR(z,\varpi)(\alpha,t)\cdot\partial_{\alpha}z(\alpha,t)-2\kappa g\frac{\rho^{2}}{\mu^{2}}\partial_{\alpha}\zz(\alpha,t).
\end{equation}

For the stability of the problem we consider the Rayleigh-Taylor condition. Rayleigh \cite{ray} and Saffman-Taylor \cite{tay} gave a condition that must be satisfied for the linearized model in order to have a solution locally in time, namely that the normal component of the pressure gradient jump at the interface has to have a distinguished sign. This condition can be written as
\begin{displaymath}
\sigma(\alpha,t)=\frac{\mu^{2}}{\kappa}BR(z,\varpi)(\alpha,t)\cdot\partial^{\bot}_{\alpha}z(\alpha,t)+g\rho^{2}\partial_{\alpha}\z(\alpha,t)>0.
\end{displaymath}
 Using Hopf's lemma, the Rayleigh-Taylor condition is satisfied for $\mu^{1}=\rho^{1}=0$ (see \cite{splash}). For the case of equal viscosities ($\mu^{1}=\mu^{2}$), this condition holds when the more dense fluid lies below the interface \cite{raytay}.
 
 This stability has been used to prove local existence in Sobolev spaces, when $\mu^{1}\neq\mu^{2}$ and $\rho^{1}\neq\rho^{2}$, in \cite{hele}. For improvements for local existence results in the case $\mu^{1}=\rho^{1}=0$, see \cite{graner}. When $\mu^{1}=\mu^{2}$ there is local existence and instant analyticity in the stable case, see \cite{raytay} and \cite{contour}. For small data, the fact that $\sigma>0$ has been used to prove global existence as we can check in \cite{global}, \cite{global2}, \cite{para} and \cite{Granero}. Furthermore, there exists initial data with $\sigma>0$ that in finite time turns to $\sigma<0$ (see \cite{raytay} and \cite{Granero-Gomez}) and later in finite time the interface breaks down \cite{Breakdown}.

 Finally we introduce the function that measures the arc-chord condition
\begin{displaymath}
\F(\alpha,\beta,t)=\frac{\beta^{2}}{\abs{z(\alpha)-z(\alpha-\beta)}^{2}},\quad\alpha,\beta\in\R
\end{displaymath}
with 
\begin{displaymath}
\F(\alpha,0,t)=\frac{1}{\abs{\partial_{\alpha}z(\alpha,t)}^{2}}.
\end{displaymath}
The main theorem of this paper is the following:
\begin{thm}
\label{main}
Let $z_{0}(\alpha)\in H^{k}(\T)$ for $k\ge 4$ and $\mathcal{F}(z_{0})(\alpha,\beta)\in L^{\infty}$ . Then the Muskat problem (\ref{br}-\ref{vor}) will not break down in a splat singularity, i.e., there is no time where there exist disjoint intervals $I_{1},I_{2}\in\R$ such that $z(I_{1},t)=z(I_{2},t)$.  
\end{thm}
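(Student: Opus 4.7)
The plan is to argue by contradiction. Suppose that at some finite first time $T^{\ast}$ the interface develops a splat: there exist disjoint intervals $I_{1},I_{2}\subset\R$ with $z(I_{1},T^{\ast})=z(I_{2},T^{\ast})$. Then, picking $\alpha\in I_{1}$ and the matching $\tilde{\alpha}\in I_{2}$ with $z(\alpha,T^{\ast})=z(\tilde{\alpha},T^{\ast})$, the denominator in the definition of $\mathcal{F}(z)$ vanishes while $\abs{\alpha-\tilde{\alpha}}\ge d(I_{1},I_{2})>0$, so $\norm{\F}_{L^{\infty}(S)}=+\infty$ at $t=T^{\ast}$. The theorem therefore reduces to an \emph{a priori} bound showing that $\norm{\F}_{L^{\infty}(S)}$ and $\norm{z}_{H^{k}(\T)}$ both stay finite on every compact subinterval of the maximal existence interval, in particular up to $T^{\ast}$.

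For the first step I would propagate the Sobolev norm of $z$ via an estimate
\begin{equation*}
\frac{d}{dt}\norm{z(\cdot,t)}^{2}_{H^{k}(\T)}\le \est{k},
\end{equation*}
following the local-existence framework of \cite{hele,graner,splash}. The exponential gain comes from commutator estimates on the Birkhoff--Rott operator \eqref{br}, the implicit amplitude equation \eqref{vor} (invertible thanks to the Rayleigh--Taylor sign $\sigma>0$, which here is automatic by Hopf's lemma since $\mu^{1}=\rho^{1}=0$), and the explicit formula \eqref{c} for $c(\alpha,t)$. All constants depend only on $\norm{\F}_{L^{\infty}(S)}$ and $\norm{z}_{H^{k}(\T)}$, so this estimate must be coupled to the second step.

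The second step is the heart of the argument: a pointwise-in-$(\alpha,\beta)$ bound on $\mathcal{F}(z)$. Differentiating $\mathcal{F}(z)(\alpha,\beta,t)$ in $t$ via \eqref{zt} reduces matters to controlling $\abs{BR(z,\varpi)(\alpha,t)-BR(z,\varpi)(\alpha-\beta,t)}$ and the tangential difference $c(\alpha,t)\partial_{\alpha}z(\alpha,t)-c(\alpha-\beta,t)\partial_{\alpha}z(\alpha-\beta,t)$, each weighted by $\mathcal{F}(z)/\beta^{2}$. Splitting the principal-value kernel into a near-diagonal part (handled by Taylor expansion and absorbed into $\norm{z}_{H^{k}(\T)}$) and a far part (dominated by $\mathcal{F}(z)$ itself via Cauchy--Schwarz), I would aim at
\begin{equation*}
\frac{d}{dt}\norm{\F}_{L^{\infty}(S)}\le \est{k}.
\end{equation*}
Combined with the first step, Grönwall's inequality then yields $\norm{\F}_{L^{\infty}(S)}+\norm{z}_{H^{k}(\T)}\le C(T,z_{0})$ on $[0,T]$ for every finite $T$, contradicting the blow-up of $\norm{\F}_{L^{\infty}(S)}$ forced by a splat.

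The main obstacle is a \emph{cross} contribution to $BR(z,\varpi)(\alpha,t)-BR(z,\varpi)(\alpha-\beta,t)$ produced by the two nearly-coincident arcs: for $\alpha\in I_{1}$ and $\beta=\alpha-\tilde{\alpha}$ with $\tilde{\alpha}\in I_{2}$, the denominator of the Birkhoff--Rott kernel is of order $\abs{z(\alpha)-z(\tilde{\alpha})}$, which tends to zero, while $\beta$ stays away from zero, so local regularity alone cannot give smallness. The expected cancellation comes from the one-phase structure encoded in \eqref{vor}: since $\partial_{\alpha}z$ on the two approaching arcs is almost anti-parallel while $\partial_{\alpha}\zz$ remains of comparable sign, the amplitude $\varpi$ at the two matching points turns out to be essentially opposite, and the integrand of $BR$ picks up an extra small factor from this near-cancellation. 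Making this quantitative uniformly in $(\alpha,\beta)$---and carrying it through a suitable regularized problem so that the estimate survives in the limit---is where the bulk of the work will lie; the argument should parallel the splash analysis of \cite{absence}, with the single-point degeneration of $\mathcal{F}(z)$ replaced by uniform $L^{\infty}$-in-$(\alpha,\beta)$ control.
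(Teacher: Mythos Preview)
Your proposal misses the mechanism that actually distinguishes a splat from a splash, and the Gr\"onwall step is fatally flawed.

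First, the differential inequalities you aim for,
\[
\frac{d}{dt}\norm{z}^{2}_{H^{k}}\le \est{k},\qquad
\frac{d}{dt}\norm{\F}_{L^{\infty}}\le \est{k},
\]
are precisely the local-existence estimates from \cite{hele}; the paper quotes them too. But an inequality $E'\le e^{CE}$ integrates only to $E(t)\le -\tfrac{1}{C}\log\bigl(e^{-CE(0)}-Ct\bigr)$, which blows up at a finite time determined by $E(0)$. It does \emph{not} give a bound on $[0,T]$ for arbitrary $T$, so your contradiction never closes. In fact, for this very one-phase problem splash singularities \emph{do} occur (see \cite{splash}): $\norm{\F}_{L^{\infty}}$ can and does blow up in finite time. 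Any argument that would bound $\norm{\F}_{L^{\infty}}$ uniformly from these estimates alone would also exclude splashes, contradicting that result. Your hoped-for ``cross cancellation'' in $\varpi$ cannot rescue this, since the same heuristic would apply equally to a pointwise collision.

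The paper's route is entirely different and hinges on analyticity plus the specific geometry of a splat. One first proves \emph{instant analyticity}: the solution extends holomorphically to a complex strip $S(t)$ for $t>0$. One then shows the width of $S(t)$ decays but stays positive as long as $\norm{z}_{H^{4}(S)}$ and $\norm{\F}_{L^{\infty}(S)}$ remain finite. Since at the splat time the arc-chord blows up in the original variables, the equations are pulled back through a conformal map $P(w)=\bigl(\tan(w/2)\bigr)^{1/2}$ to a tilde domain $\tilde\Omega$ in which the two colliding arcs are \emph{separated}; there $\norm{\mathcal{F}(\tilde z)}_{L^{\infty}}$ and $\norm{\tilde z}_{H^{4}}$ stay bounded up to $T$, so the analyticity strip for $\tilde z$---and hence for $z=P^{-1}(\tilde z)$---remains nonzero at $T$. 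The contradiction then comes from the identity theorem: a real-analytic curve that agrees with itself along an \emph{arc} must agree everywhere, which is impossible for a curve that was initially non-self-intersecting. This last step is exactly what separates splat from splash (a single-point coincidence gives no such rigidity), and it is absent from your outline.
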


In order to prove this theorem we have organized the paper as follows.

 In sections \ref{esti}, \ref{min} and \ref{inst} we present several a priori estimates that provide instant analyticity for a single curve that initially satisfies the arc-chord and Rayleigh-Taylor conditions. Section \ref{decai} is devoted to prove that the region of analyticity does not collapse to the real axis as long as the curve remains smooth and the arc-chord condition remains bounded.
 
 Instant analyticity and exponential decay of the strip of analyticity is shown in \cite{raytay} for the case where both fluids have equal viscosities ($\mu^{1}=\mu^{2}$). In such case, the formula for the strength of the vorticity is simpler 
 
\begin{displaymath}
\varpi(\alpha,t)=-(\rho^{2}-\rho^{1})\partial_{\alpha}\zz(\alpha,t).
\end{displaymath}

In our scenario, the one-phase Muskat problem, the expression (\ref{vor}) of the strength of the vorticity involves the Birkhoff-Rott integral.

Finally in section \ref{nosplat}, we prove the main theorem using a contradiction argument. The idea of the proof is the following:

Suppose that there exists a splat singularity at time $T$. If the solution $z(\alpha,t)$ is real-analytic at time $T$, the formation of a splat singularity would be impossible. This follows from the fact that we would get a real-analytic curve $z(\alpha,t)$ self-intersecting along an arc, therefore $z(\alpha,t)$ should self-intersect at all points.

Since the curve self-intersects, the arc-chord condition fails in our domain $\Omega$, and thus we have no control on the decay of the strip of analyticity. In order to get around this issue it is necessary to apply a transformation defined by $\tilde{z}(\alpha,t))=P(z(\alpha,t))$ where $P$ is a conformal map (see \cite{finite}):
\begin{displaymath}
P(w)=(\tan(\frac{w}{2}))^{\frac{1}{2}}.
\end{displaymath}

This conformal map transforms our domain $\Omega$ in $\tilde{\Omega}$ as we can see in Figure \ref{fig:trans}. The branch of the root will be taken in such a way that it separates the self-intersecting points of the interface.

\begin{figure}[htb]
\centering
\subfigure[$\Omega(T)$ domain]{\includegraphics[width=62mm]{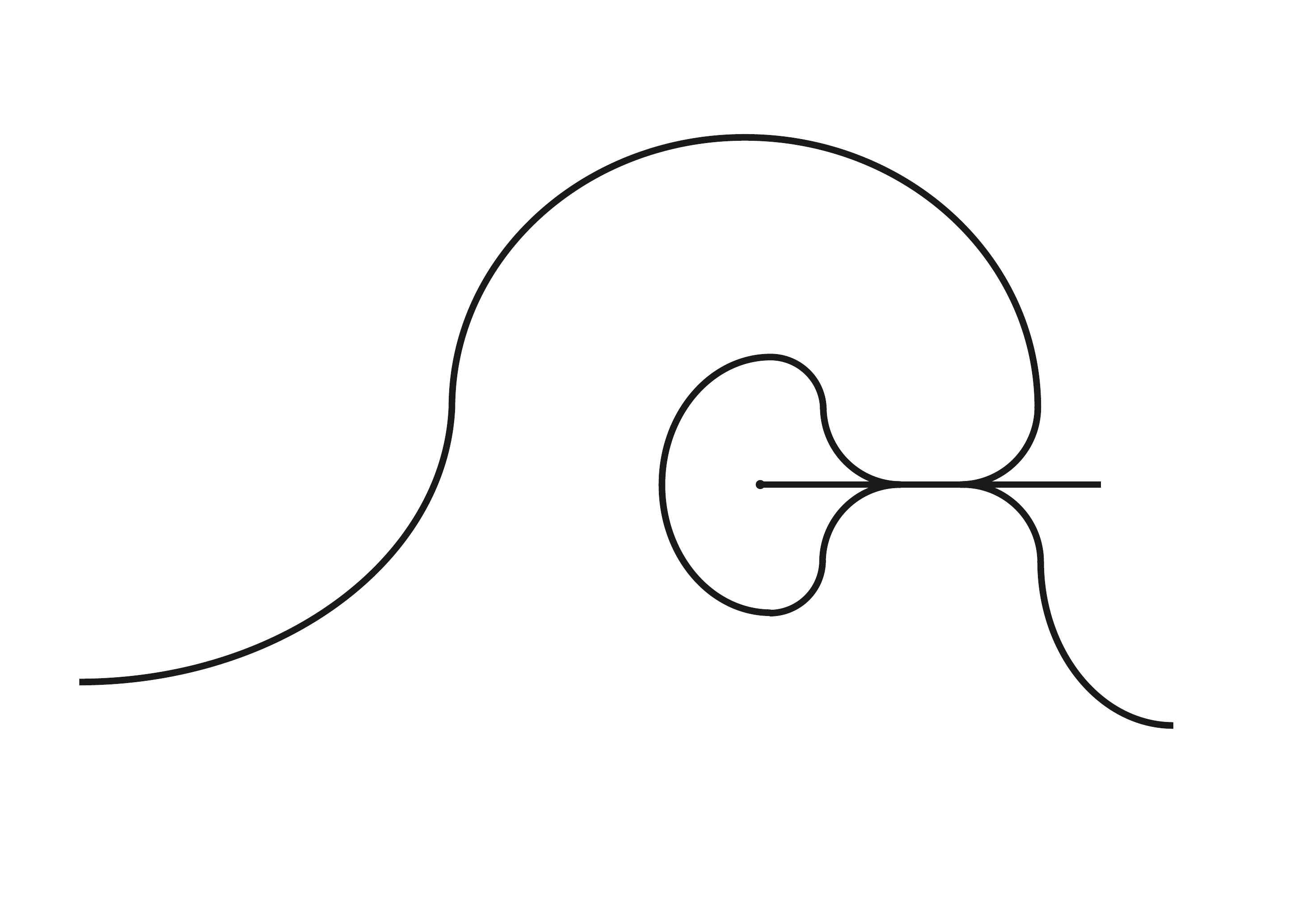}}
\subfigure[$\tilde{\Omega}(T)$ domain]{\includegraphics[width=62mm]{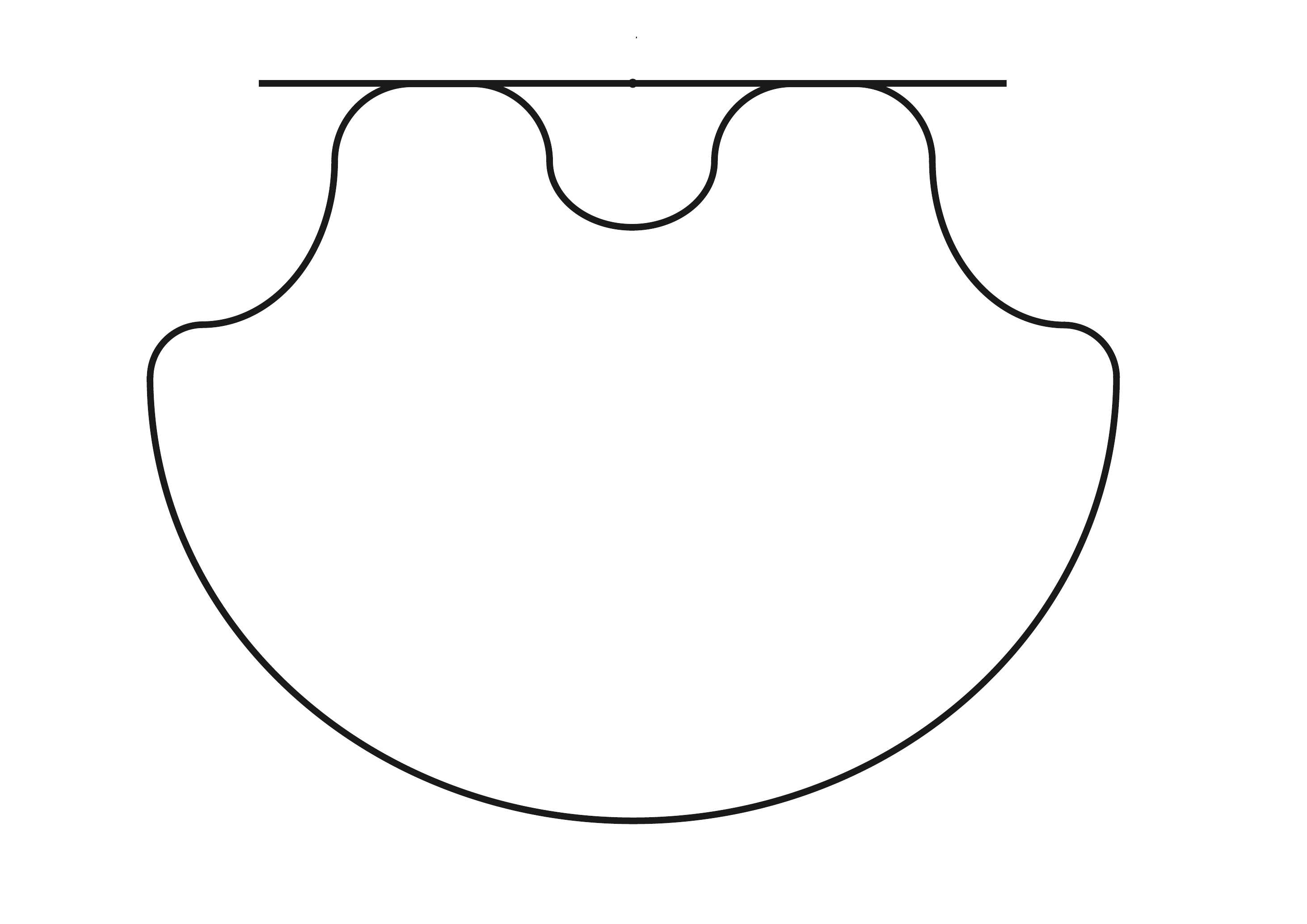}}
\caption{Finite time singularities} \label{fig:trans}
\end{figure}

The new contour evolution equation where we handle the splat singularity is (see \cite{splash} for more details):

\begin{equation*}
\tilde{z}_{t}(\alpha,t)=Q^{2}(\alpha,t)BR(\tilde{z},\tilde{\varpi})(\alpha,t)+\tilde{c}(\alpha)\partial_{\alpha}\tilde{z}(\alpha,t)
\end{equation*}
where
\begin{equation*}
Q^{2}(\alpha,t)=\abs{\frac{dP}{dw}(z(\alpha,t))}^{2}=\abs{\frac{dP}{dw}(P^{-1}(\tilde{z}(\alpha,t)))}^{2},
\end{equation*}
\begin{equation*}
\tilde{\varpi}(\alpha,t)=-2BR(\tilde{z},\tilde{\varpi})(\alpha,t)\cdot\partial_{\alpha}\tilde{z}(\alpha,t)-2\frac{\rho^{2}}{\mu^{2}}\partial_{\alpha}(P_{2}^{-1}(\tilde{z}(\alpha,t)))
\end{equation*}
and
\begin{align*}
&\tilde{c}(\alpha,t)=\frac{\alpha+\pi}{2\pi}\int_{\T}{\frac{\partial_{\beta}\tilde{z}(\beta,t)}{\abs{\partial_{\beta}\tilde{z}(\beta,t)}^{2}}\cdot\partial_{\beta}BR(\tilde{z},\tilde{\varpi})(\beta,t)d\beta}\nonumber\\
&-\int_{-\pi}^{\alpha}\frac{\partial_{\beta}\tilde{z}(\beta,t)}{\abs{\partial_{\beta}\tilde{z}(\beta,t)}^{2}}\cdot\partial_{\beta}BR(\tilde{z},\tilde{\varpi})(\beta,t)d\beta.
\end{align*}

Finally we find the Rayleigh-Taylor condition in terms of $\tilde{z}$
\begin{equation}
\label{rttild}
\tilde{\sigma}(\alpha,t)=\frac{\mu^{2}}{\kappa}BR(\tilde{z},\tilde{\varpi})(\alpha,t)\cdot\partial^{\bot}_{\alpha}\tilde{z}(\alpha,t)+\rho^{2}g\nabla P_{2}^{-1}(\tilde{z}(\alpha,t))\cdot\partial^{\bot}_{\alpha}\tilde{z}(\alpha,t).
\end{equation}

Our final goal in section \ref{nosplat} is to prove instant analyticity and decay of the strip of analyticity for the Muskat problem in the new domain, which allows us to apply our argument of non-splat, i.e., to prove Theorem \ref{main}.
 %%%%%%%%%%%%%%%%%%%%%%%%%%%%%%%%%%%%%%%%%%%%%%%%%%%%%%%%%%%%%%%%%%%%%%%%%%%%%%%%%%%%%%%%%%%%%%%%%%%%%%%%%%%%%%
  \section{Estimates on $z(\alpha,t)$}
  \label{esti}
  
  Here we show the main estimates that provide instant analyticity into the strip $S(t)=\{\alpha+i\zeta:\abs{\zeta}<\lambda t\}$ for each t. To do that we will need the following estimates from \cite{hele}:
  \begin{equation}
 \label{ampli}
 \norm{\varpi}_{H^{k}}\le \exp C(\norm{\F}^{2}_{L^{\infty}}+\norm{z}^{2}_{H^{k+1}}),
 \end{equation}
 for $k\ge 2$.
 \begin{equation}
 \label{bir}
 \norm{BR(z,\varpi)}_{H^{k}}\le \exp C(\norm{\F}^{2}_{L^{\infty}}+\norm{z}^{2}_{H^{k+1}}),
 \end{equation}
 for $k\ge 2$.
 
 These estimates follows also into the complex strip $S$, since the time derivative plays no role and hence any extra term appears in relation with the terms in \cite{hele}.
 
 \begin{nota}
  Inequalities (\ref{ampli}) and (\ref{bir}) can also be bounded by a polynomial function, see \cite{porous}. In our case, to prove instant analyticity and the decay of the strip, both estimates are valid.
 \end{nota}
  Let $\lambda$\footnote{At the end of the proof of the Theorem \ref{inst}, we can take any $\lambda<\frac{\min_{\alpha}(\sigma(\alpha,0))}{2}$} be given in the definition of $L^{2}(S)$ and $H^{k}(S)$,
  \begin{displaymath}
  \norm{z}^{2}_{L^{2}(S)}(t)=\sum_{\pm}\int_{\T}\abs{z(\alpha\pm i\lambda t,t)-(\alpha\pm i\lambda t,0)}^{2}d\alpha,
  \end{displaymath}
  \begin{displaymath}
  \norm{z}^{2}_{H^{k}(S)}(t)=\norm{z}^{2}_{L^{2}(S)}(t)+\sum_{\pm}\int_{\T}\abs{\partial_{\alpha}^{k}z(\alpha\pm i\lambda t,t)}^{2}d\alpha.
  \end{displaymath}
 \begin{nota}
 Above $\abs{\cdot}$ is the modulus of a vector in $\C^{2}$.
 \end{nota}

\subsection{Estimates for the $H^{4}(S)$ norm}\label{estbr}

We shall analyze the evolution of $\norm{z}_{H^{4}(S)}(t)$.

In order to simplify the exposition we write $z(\alpha,t)=z(\alpha)$ for a fixed $t$, and we denote $\alpha\pm i\lambda t\equiv\gamma$.

It is easy to find that
\begin{equation}
\label{l2}
\frac{1}{2}\frac{d}{dt}\int_{\T}\abs{z(\alpha\pm i\lambda t)-(\alpha\pm i\lambda t,0)}^{2}d\alpha\le\est{k},
\end{equation}
for $k\ge 3$.

Next, we check that
$$\frac{1}{2}\frac{d}{dt}\int_{\T}\abs{\partial^{4}_{\alpha}z(\gamma)}^{2}d\alpha=\sum_{j=1,2}\frac{1}{2}\frac{d}{dt}\int_{\T}\abs{\partial^{4}_{\alpha}z_{j}(\gamma)}^{2}d\alpha$$

where,
$$\frac{1}{2}\frac{d}{dt}\int_{\T}\abs{\partial^{4}_{\alpha}z_{j}(\gamma)}^{2}d\alpha=\re\int_{\T}\overline{\partial^{4}_{\alpha}z_{j}(\gamma)}(\partial_{t}(\partial^{4}_{\alpha}z_{j})(\gamma)\pm i\lambda\partial^{5}_{\alpha}z_{j}(\gamma))d\alpha$$

then,
$$\frac{1}{2}\frac{d}{dt}\int_{\T}\abs{\partial^{4}_{\alpha}z(\gamma)}^{2}d\alpha=\re\int_{\T}\overline{\partial^{4}_{\alpha}z(\gamma)}\cdot\partial^{4}_{\alpha}z_{t}(\gamma)d\alpha\pm\re\int_{\T}\overline{\partial^{4}_{\alpha}z(\gamma)}\cdot i\lambda\partial^{5}_{\alpha}z(\gamma)d\alpha$$$\equiv I_{1}+I_{2}.$

Let us study $I_{2}$:
\begin{align*}
&I_{2}=\re\int_{\T}\overline{\partial^{4}_{\alpha}z(\gamma)}\cdot\partial^{5}_{\alpha}z(\gamma)i\lambda d\alpha=\int_{\T} \lambda(-\re(\de{})\im(\partial^{5}_{\alpha}z)+\im(\partial^{4}_{\alpha}z)\re(\dee{}))d\alpha\\
&=2\lambda\int_{\T}\im(\de{})\re(\dee{})d\alpha=-2\lambda\int_{\T}\im(\de{})\re(\Lambda(H(\de{})))d\alpha\\
&=-2\lambda\int_{\T}\Lambda^{\frac{1}{2}}(\im(\de))\re(\Lambda^{\frac{1}{2}}H(\de))d\alpha\le 2\lambda\norm{\Lambda^{\frac{1}{2}}\im(\de)}_{\s}\norm{\Lambda^{\frac{1}{2}}\de}_{\s}\\
&\le 2\lambda\norm{\Lambda^{\frac{1}{2}}\de}^{2}_{\s},
\end{align*}
where $\Lambda$ is defined by the Fourier transform $\widehat{\Lambda f}(\xi)=\abs{\xi}\widehat{f}(\xi)$ and $H$ is the Hilbert transform:
\begin{align*}
&\Lambda(f)(x)=\frac{1}{2\pi}PV\int\frac{f(x)-f(y)}{\abs{x-y}^{2}}dy,\\
&H(f)(x)=\frac{1}{\pi}PV\int\frac{f(y)}{x-y}dy.
\end{align*} 
Therefore,
$$I_{2}\le 2\lambda\norm{\Lambda^{\frac{1}{2}}\de}^{2}_{\s}.$$

Since we have $z_{t}(\gamma)=BR(z,\varpi)(\gamma)+c(\gamma)\partial_{\alpha}z(\gamma)$, then:
$$I_{1}=\re\int_{\T}\overline{\de{(\gamma)}}\cdot\partial^{4}_{\alpha}BR(z,\varpi)(\gamma)d\alpha+\re\int_{\T}\overline{\de{(\gamma)}}\cdot\partial^{4}_{\alpha}(c(\gamma)\cdot\partial_{\alpha}z(\gamma))d\alpha\equiv J_{1}+J_{2}.$$

We will estimate $J_{1}$ in the subsections $\ref{estbr}$ and $\ref{look}$ and $J_{2}$ in $\ref{cest}.$

\subsubsection{Integrable terms in $\partial^{4}_{\alpha}BR(z,\varpi)$}\label{estbr}
We descompose $J_{1}=I_{3}+I_{4}+I_{5}+I_{6}+I_{7}$, where:
\begin{align*}
&I_{3}=\frac{1}{2\pi}\re\int_{\T}\int_{\R}\overline{\de{(\gamma)}}\cdot\partial^{4}_{\alpha}(\frac{(z(\gamma)-z(\gamma-\beta))^{\bot}}{\abs{z(\gamma)-z(\gamma-\beta)}^{2}})\varpi(\gamma-\beta)d\alpha d\beta,\\
&I_{4}=\frac{2}{\pi}\re\int_{\T}\int_{\R}\overline{\de{(\gamma)}}\cdot\partial^{3}_{\alpha}(\frac{(z(\gamma)-z(\gamma-\beta))^{\bot}}{\abs{z(\gamma)-z(\gamma-\beta)}^{2}})\partial_{\alpha}\varpi(\gamma-\beta)d\alpha d\beta,\\
&I_{5}=\frac{3}{\pi}\re\int_{\T}\int_{\R}\overline{\de{(\gamma)}}\cdot\partial^{2}_{\alpha}(\frac{(z(\gamma)-z(\gamma-\beta))^{\bot}}{\abs{z(\gamma)-z(\gamma-\beta)}^{2}})\partial^{2}_{\alpha}\varpi(\gamma-\beta)d\alpha d\beta,\\
&I_{6}=\frac{2}{\pi}\re\int_{\T}\int_{\R}\overline{\de{(\gamma)}}\cdot\partial_{\alpha}(\frac{(z(\gamma)-z(\gamma-\beta))^{\bot}}{\abs{z(\gamma)-z(\gamma-\beta)}^{2}})\partial^{3}_{\alpha}\varpi(\gamma-\beta)d\alpha d\beta,\\
&I_{7}=\frac{1}{2\pi}\re\int_{\T}\int_{\R}\overline{\de{(\gamma)}}\cdot\frac{(z(\gamma)-z(\gamma-\beta))^{\bot}}{\abs{z(\gamma)-z(\gamma-\beta)}^{2}}\partial^{4}_{\alpha}\varpi(\gamma-\beta)d\alpha d\beta.
\end{align*}

Below we estimate the highest order term of each $I_{1}$. In order to estimate $I_{j}$ for $j=4,5,6$, we refer the reader to the paper \cite{hele}. We get,
\begin{displaymath}
I_{4}+I_{5}+I_{6}\le\est{4}
\end{displaymath}

The most singular terms for $I_{3}$ are those in which four derivatives appear. In order to simplify we write $\Delta\partial^{k}_{\alpha}z\equiv\partial^{k}_{\alpha}z(\gamma)-\partial_{\alpha}^{k}z(\gamma-\beta)$.

 One of the two singular terms of $I_{3}$ is given by
$$K_{1}=\frac{1}{2\pi}\re\int_{\T}\int_{\R}\overline{\de{(\gamma)}}\cdot\frac{(\Delta\de)^{\bot}}{\abs{\Delta z}^{2}}\varpi(\gamma-\beta)d\alpha d\beta,$$

which we decompose in $K_{1}=L_{1}+L_{2}$, where

\begin{align*}
&L_{1}=\frac{1}{2\pi}\re\int_{\T}\int_{\R}\overline{\de{(\gamma)}}\cdot(\Delta\de)^{\bot}\varpi(\gamma-\beta)(\frac{1}{\abs{z(\gamma)-z(\gamma-\beta)}^{2}}-\frac{1}{\abs{\partial_{\alpha}z(\gamma)}^{2}\beta^{2}})d\alpha d\beta,\\
&L_{2}=\frac{1}{2\pi}\re\int_{\T}\int_{\R}\overline{\de{(\gamma)}}\cdot(\Delta\de)^{\bot}\frac{\varpi(\gamma-\beta)}{\abs{\partial_{\alpha}z(\gamma)}^{2}\beta^{2}}d\alpha d\beta.
\end{align*}

Let us study $L_{1}$, if $\psi=\gamma-\beta+s\beta+t\beta-st\beta$, $\phi=\gamma-\beta+s\beta$

and

 \begin{align}
 \label{b}
 &B(\gamma,\beta)\equiv\frac{1}{\abs{z(\gamma)-z(\gamma-\beta)}^{2}}-\frac{1}{\abs{\partial_{\alpha}z(\gamma)}^{2}\beta^{2}}\nonumber\\
 &=\frac{\beta\int _
{0}^{1}\int _
{0}^{1}\partial^{2}_{\alpha}z(\psi)(1-s)dtds\cdot\int _
{0}^{1}[\partial_{\alpha}z(\gamma)+\partial_{\alpha}z(\phi)]ds}{\abs{z(\gamma)-z(\gamma-\beta)}^{2}\abs{\partial_{\alpha}z(\gamma)}^{2}}\\
 &=\frac{\beta\int _
{0}^{1}\int _
{0}^{1}\frac{\partial^{2}_{\alpha}z(\psi)-\partial^{2}_{\alpha}z(\gamma)}{\abs{\psi-\gamma}^{\delta}}\beta^{\delta}(-1+s+t-st)^{\delta}(1-s)dtds\cdot\int _
{0}^{1}[\partial_{\alpha}z(\gamma)+\partial_{\alpha}z(\phi)]ds}{\abs{z(\gamma)-z(\gamma-\beta)}^{2}\abs{\partial_{\alpha}z(\gamma)}^{2}}\nonumber\\
 &+\frac{\beta\partial^{2}_{\alpha}z(\gamma)\cdot\int _
{0}^{1}[\partial_{\alpha}z(\gamma)+\partial_{\alpha}z(\phi)]ds}{\abs{z(\gamma)-z(\gamma-\beta)}^{2}\abs{\partial_{\alpha}z(\gamma)}^{2}}\equiv B_1(\gamma,\beta)+B_2(\gamma,\beta)\nonumber
 \end{align}
 
we have,
\begin{align*}
&L_{1}=\frac{1}{2\pi}\re\int_{\T}\int_{\R}\overline{\de{(\gamma)}}\cdot(\Delta\de)^{\bot}\varpi(\gamma-\beta)B_1(\gamma,\beta)d\alpha d\beta\\
&+\frac{1}{2\pi}\re\int_{\T}\int_{\R}\overline{\de{(\gamma)}}\cdot(\Delta\de)^{\bot}\varpi(\gamma-\beta)B_2(\gamma,\delta)d\alpha d\beta\equiv M_{1}+M_{2}.
\end{align*}

It is easy to check,
$$M_{1}\le C\norm{\F}^{\frac{3}{2}}_{L^{\infty}(S)}\norm{z}_{\mathcal{C}^{2,\delta}(S)}\norm{\varpi}_{L^{\infty}(S)}\norm{\de}^{2}_{\s}.$$

Furthermore,
\begin{align*}
&B_2(\gamma,\beta)=\frac{\beta^{2}\partial^{2}_{\alpha}z(\gamma)\int _
{0}^{1}\int _
{0}^{1}\partial^{2}_{\alpha}z(\eta)(s-1)dtds}{\abs{z(\gamma)-z(\gamma-\beta)}^{2}\abs{\partial_{\alpha}z(\gamma)}^{2}}\\
&+\frac{\beta\partial^{2}_{\alpha}z(\gamma)2\partial_{\alpha}z(\gamma)}{\abs{z(\gamma)-z(\gamma-\beta)}^{2}\abs{\partial_{\alpha}z(\gamma)}^{2}}\equiv B_3(\gamma,\beta)+B_4(\gamma,\beta).
 \end{align*}

 In the same way, we deal with $M_{2}$ and we have:
  \begin{align*}
 &M_{2}=\frac{1}{2\pi}\re\int_{\T}\int_{\R}\overline{\de{(\gamma)}}\cdot(\Delta\de)^{\bot}\varpi(\gamma-\beta)B_3(\gamma,\beta)d\alpha d\beta\\
 &+\frac{1}{2\pi}\re\int_{\T}\int_{\R}\overline{\de{(\gamma)}}\cdot(\Delta\de)^{\bot}\varpi(\gamma-\beta)B_4(\gamma,\beta)d\alpha d\beta\equiv N_{1}+N_{2}.
 \end{align*}
 It is clear that,
 $$N_{1}\le C\norm{\F}^{2}_{L^{\infty}}\norm{z}^{2}_{\mathcal{C}^{2}}\norm{\varpi}_{L^{\infty}}\norm{\de}^{2}_{\s}$$
and
\begin{align*}
 &N_{2}=\frac{1}{2\pi}\re\int_{\T}\int_{\R}\overline{\de{(\gamma)}}\cdot(\Delta\de)^{\bot}\varpi(\gamma-\beta)\frac{\beta\partial^{2}_{\alpha}z(\gamma)2\partial_{\alpha}z(\gamma)}{\abs{\partial_{\alpha}z(\gamma)}^{2}}B(\gamma,\beta)d\alpha d\beta\\
 &+\frac{1}{2\pi}\re\int_{\T}\int_{\R}\overline{\de{(\gamma)}}\cdot(\Delta\de)^{\bot}\varpi(\gamma-\beta)\frac{\partial^{2}_{\alpha}z(\gamma)2\partial_{\alpha}z(\gamma)}{\abs{\partial_{\alpha}z(\gamma)}^{4}\beta}d\alpha d\beta\equiv N_{2}^{1}+N_{2}^{2}.
 \end{align*}
Directly,
\begin{align*}
N_{2}^{1}\le C\norm{\F}^{2}_{L^{\infty}}\norm{z}^{2}_{\mathcal{C}^{2}}\norm{z}^{2}_{\mathcal{C}^{1}}\norm{\varpi}_{L^{\infty}}\norm{\de}^{2}_{\s}
\end{align*}
and we decompose,
\begin{align*}
&N_{2}^{2}=\frac{1}{2\pi}\re\int_{\T}\int_{\R}\overline{\de{(\gamma)}}\cdot(\de{(\gamma)})^{\bot}\varpi(\gamma-\beta)\frac{\partial^{2}_{\alpha}z(\gamma)2\partial_{\alpha}z(\gamma)}{\abs{\partial_{\alpha}z(\gamma)}^{2}\beta}d\alpha d\beta\\
&-\frac{1}{2\pi}\re\int_{\T}\int_{\R}\overline{\de{(\gamma)}}\cdot(\de{(\gamma-\beta)})^{\bot}\varpi(\gamma-\beta)\frac{\partial^{2}_{\alpha}z(\gamma)2\partial_{\alpha}z(\gamma)}{\abs{\partial_{\alpha}z(\gamma)}^{2}\beta}d\alpha d\beta\\
&\equiv N_{2}^{21}+N_{2}^{22}
\end{align*}
where
\begin{align*}
&N_{2}^{21}=\frac{1}{2}\re\int_{\T}\overline{\de{(\gamma)}}\cdot(\de{(\gamma)})^{\bot}\frac{\partial^{2}_{\alpha}z(\gamma)2\partial_{\alpha}z(\gamma)}{\abs{\partial_{\alpha}z(\gamma)}^{2}}H(\varpi)d\alpha\\
&\le C\norm{\F}^{\frac{1}{2}}_{L^{\infty}}\norm{z}_{\mathcal{C}^{2}}\norm{\de}^{2}_{\s}\norm{\varpi}_{\mathcal{C}^{\delta}}
\end{align*}
and
\begin{align*}
&N_{2}^{22}=-\frac{1}{2\pi}\re\int_{\T}\int_{\R}\overline{\de{(\gamma)}}\cdot(\de{(\gamma-\beta)})^{\bot}(\varpi(\gamma-\beta)-\varpi(\gamma))\frac{\partial^{2}_{\alpha}z(\gamma)2\partial_{\alpha}z(\gamma)}{\abs{\partial_{\alpha}z(\gamma)}^{2}\beta}d\alpha d\beta\\
&-\frac{1}{2\pi}\re\int_{\T}\int_{\R}\overline{\de{(\gamma)}}\cdot(\de{(\gamma-\beta)})^{\bot}\varpi(\gamma)\frac{\partial^{2}_{\alpha}z(\gamma)2\partial_{\alpha}z(\gamma)}{\abs{\partial_{\alpha}z(\gamma)}^{2}\beta}d\alpha d\beta\\
&\le C\norm{\F}^{2}_{L^{\infty}}\norm{z}_{\mathcal{C}^{2}}\norm{z}_{\mathcal{C}^{1}}\norm{\varpi}_{\mathcal{C}^{1}}\norm{\de}^{2}_{\s}\\
&-\frac{1}{2}\re\int_{\T}\overline{\de{(\gamma)}}\cdot\varpi(\gamma)\frac{\partial^{2}_{\alpha}z(\gamma)2\partial_{\alpha}z(\gamma)}{\abs{\partial_{\alpha}z(\gamma)}^{2}}H(\de)d\alpha\\
&\le C\norm{\F}^{2}_{L^{\infty}}\norm{z}_{\mathcal{C}^{2}}\norm{z}_{\mathcal{C}^{1}}\norm{\varpi}_{\mathcal{C}^{1}}\norm{\de}^{2}_{\s}
\end{align*}
Here we have used
\begin{align*}
&\norm{H(f)}_{L^{p}}\le C\norm{f}_{L^{p}}\quad\text{for }1<p<\infty,\\
&\norm{H(f)}_{L^{\infty}}\le\norm{f}_{\mathcal{C}^{\delta}}\quad\text{for $f\in\mathcal{C}^{\delta}$, and $0<\delta<1$.}
\end{align*}
Hence, using (\ref{ampli})
$$L_{1}\le\est{4}.$$

For $L_{2}$ we write $L_{2}=M_{3}+M_{4}$, with
\begin{align*}
&M_{3}=\frac{1}{2\pi}\re\int_{\T}\int_{\R}\overline{\de{(\gamma)}}\cdot(\Delta\de)^{\bot}\frac{\varpi(\gamma-\beta)-\varpi(\gamma)}{\abs{\partial_{\alpha}z(\gamma)}^{2}\beta^{2}}d\alpha d\beta,\\
&M_{4}=\frac{1}{2\pi}\re\int_{\T}\int_{\R}\overline{\de{(\gamma)}}\cdot(\Delta\de)^{\bot}\frac{\varpi(\gamma)}{\abs{\partial_{\alpha}z(\gamma)}^{2}\beta^{2}}d\alpha d\beta.
\end{align*}

Next we write
\begin{align*}
M_{3}&=\frac{1}{2\pi}\re\int_{\T}\int_{\R}\overline{\de{(\gamma)}}\cdot\de
(\gamma)\frac{\varpi(\gamma-\beta)-\varpi(\gamma)}{\abs{\partial_{\alpha}z(\gamma)}^{2}\beta^{2}}d\alpha d\beta\\
&-\frac{1}{2\pi}\re\int_{\T}\int_{\R}\overline{\de{(\gamma)}}\cdot\de(\gamma-\beta)\frac{\varpi(\gamma-\beta)-\varpi(\gamma)}{\abs{\partial_{\alpha}z(\gamma)}^{2}\beta^{2}}d\alpha d\beta\equiv N_{3}+N_{4}
\end{align*}
where 
\begin{align*}
N_{3}&=\re\int_{\T}\overline{\de{(\gamma)}}\cdot\de
(\gamma)\frac{\Lambda(\varpi)(\gamma)}{\abs{\partial_{\alpha}z(\gamma)}^{2}}d\alpha\\
&\le C\norm{\F}_{L^{\infty}(S)}\norm{\de}^{2}_{\s}\norm{\Lambda\varpi}_{L^{\infty}(S)}\le C\norm{\F}_{L^{\infty}(S)}\norm{\de}^{2}_{\s}\norm{\varpi}_{\mathcal{C}^{1,\delta}(S)}
\end{align*}
and
\begin{align*}
N_{4}&=-\frac{1}{2\pi}\re\int_{\T}\int_{\R}\overline{\de{(\gamma)}}\cdot\de(\gamma-\beta)\frac{\int _
{0}^{1}[\partial_{\alpha}\varpi(\gamma-s\beta)-\partial_{\alpha}\varpi(\gamma)]ds}{\abs{\partial_{\alpha}z(\gamma)}^{2}\beta}d\alpha d\beta\\
&-\frac{1}{2\pi}\re\int_{\T}\int_{\R}\overline{\de{(\gamma)}}\cdot\de(\gamma-\beta)\frac{\partial_{\alpha}\varpi(\gamma)}{\abs{\partial_{\alpha}z(\gamma)}^{2}\beta}d\alpha d\beta\\
&\le C\norm{\F}_{L^{\infty}(S)}\norm{\de}^{2}_{\s}\norm{\varpi}_{\mathcal{C}^{2}(S)}-\frac{1}{2}\re\int_{\T}\overline{\de{(\gamma)}}\cdot H(\de)(\gamma)\frac{\partial_{\alpha}\varpi(\gamma)}{\abs{\partial_{\alpha}z(\gamma)}^{2}}d\alpha\\
&\le\est{4}.
\end{align*}

For $M_{4}$,
\begin{align*}
M_{4}&=\re\int_{\T}\overline{\de{(\gamma)}}\cdot\Lambda(\de^{\bot})(\gamma)\frac{\varpi(\gamma)}{\abs{\partial_{\alpha}z(\gamma)}^{2}}d\alpha\\
&=\int_{\T}\im(\frac{\varpi}{A(t)})(-\re(\de)\cdot\im(\Lambda(\de^{\bot}))+\im(\de)\cdot\re(\Lambda(\de^{\bot})))d\alpha\\
&+\int_{\T}\re(\frac{\varpi}{A(t)})(\re(\de)\cdot\re(\Lambda(\de^{\bot}))+\im(\de)\cdot\im(\Lambda(\de^{\bot})))d\alpha\\
&\equiv N_{5}+N_{6}.
\end{align*}
 Now we take,
 \begin{align*}
 N_{6}&=\int_{\T}\re(\frac{\varpi}{A(t)})(-\re(\de_{1})\re(\Lambda(\de_{2}))+\re(\de_{2})\re(\Lambda(\de_{1})))d\alpha\\
 &+\int_{\T}\re(\frac{\varpi}{A(t)})(-\im(\de_{1})\im(\Lambda(\de_{2}))+\im(\de_{2})\im(\Lambda(\de_{1})))d\alpha\\
 &\equiv N_{6}^{1}+N_{6}^{2}
 \end{align*}
  where it is easy to find a commutator formula such that, using (see \cite{commutator})
 \begin{equation}
 \label{conmutador}
 \norm{\Lambda(fg)-g\Lambda(f)}_{L^{2}}\le\norm{g}_{\mathcal{C}^{1,\delta}}\norm{f}_{L^{2}},
 \end{equation}
 we get
 \begin{align*}
 N_{6}^{1}&=\int_{\T}(-\Lambda(\re(\frac{\varpi}{A(t)})\re(\de_{1}))+\re(\frac{\varpi}{A(t)})\re(\Lambda(\de_{1})))\re(\de_{2})d\alpha\\
 &\le C\norm{\re(\frac{\varpi}{A(t)})}_{\mathcal{C}^{1,\delta}}\norm{\de}^{2}_{\s}.
 \end{align*}

In the same way,

 \begin{align*}
 N_{6}^{2}\le C\norm{\re(\frac{\varpi}{A(t)})}_{\mathcal{C}^{1,\delta}(S)}\norm{\de}^{2}_{\s}.
 \end{align*}
 
 Thus,
 $$N_{6}\le\est{4}.$$

For $N_{5}$ we have,
\begin{align*}
N_{5}&=\int_{\T}\im(\frac{\varpi}{A(t)})(\re(\de^{\bot})\cdot\im(\Lambda(\de))+\im(\de)\cdot\re(\Lambda(\de^{\bot})))d\alpha\\
&=\int_{\T}(\Lambda(\im(\frac{\varpi}{A(t)})\re(\de^{\bot}))-\im(\frac{\varpi}{A(t)})\re(\Lambda(\de^{\bot})))\cdot\im(\de)d\alpha\\
&+2\int_{\T}\im(\frac{\varpi}{A(t)})\re(\Lambda(\de^{\bot}))\cdot\im(\de)d\alpha\\
&\equiv N_{5}^{1}+N_{5}^{2}.
\end{align*}
Then,
\begin{align*}
N_{5}^{1}\le C\norm{\im(\frac{\varpi}{A(t)})}_{\mathcal{C}^{1,\delta}(S)}\norm{\de}^{2}_{\s}\le\est{4}
\end{align*}
and
\begin{align*}
N_{5}^{2}&=2\int_{\T}\Lambda^{\frac{1}{2}}(\im(\frac{\varpi}{A(t)})\im(\de))\cdot\re(\Lambda^{\frac{1}{2}}(\de^{\bot}))d\alpha\le 2\norm{\Lambda^{\frac{1}{2}}(\im(\frac{\varpi}{A(t)})\im(\de))}_{\s}\norm{\Lambda^{\frac{1}{2}}\de{}}_{\s}\\
&\le C\norm{\im(\frac{\varpi}{A(t)})}_{H^{2}(S)}(\norm{\de}_{\s}+\norm{\Lambda^{\frac{1}{2}}\de{}}_{\s})\norm{\Lambda^{\frac{1}{2}}\de{}}_{\s}\\
&\le C\norm{\im(\frac{\varpi}{A(t)})}_{H^{2}(S)}(\frac{\norm{\de}_{\s}^{2}}{2}+\frac{\norm{\Lambda^{\frac{1}{2}}\de{}}_{\s}^{2}}{2})+C\norm{\im(\frac{\varpi}{A(t)})}_{H^{2}(S)}\norm{\Lambda^{\frac{1}{2}}\de{}}^{2}_{\s}\\
&\le\est{4}+C\norm{\im(\frac{\varpi}{A(t)})}_{H^{2}(S)}\norm{\Lambda^{\frac{1}{2}}\de{}}^{2}_{\s}.
\end{align*}
Concluding,
$$K_{1}\le\est{4}+C\norm{\im(\frac{\varpi}{A(t)})}_{H^{2}(S)}\norm{\Lambda^{\frac{1}{2}}\de{}}^{2}_{\s}.$$

The other singular term with four derivatives inside $I_{3}$ is given by
$$K_{2}=-\frac{1}{\pi}\re\int_{\T}\int_{\R}\overline{\de{(\gamma)}}\cdot\frac{(\Delta z)^{\bot}}{\abs{\Delta z}^{4}}(\Delta z\cdot\Delta\de{})\varpi(\gamma-\beta) d\alpha d\beta.$$

Here we take $K_{2}=L_{3}+L_{4}+L_{5}$ where
\begin{align*}
&L_{3}=-\frac{1}{\pi}\re\int_{\T}\int_{\R}\overline{\de{(\gamma)}}\cdot(\frac{(\Delta z)^{\bot}}{\abs{\Delta z}^{4}}-\frac{\partial_{\alpha}z(\gamma)^{\bot}}{\abs{\partial_{\alpha}z(\gamma)}^{4}\beta^{3}})(\Delta z-\beta\partial_{\alpha}z(\gamma))\cdot\varpi(\gamma-\beta)\Delta\de{} d\alpha d\beta,\\
&L_{4}=-\frac{1}{\pi}\re\int_{\T}\int_{\R}\overline{\de{(\gamma)}}\cdot(\frac{(\Delta z)^{\bot}}{\abs{\Delta z}^{4}}-\frac{\partial_{\alpha}z(\gamma)^{\bot}}{\abs{\partial_{\alpha}z(\gamma)}^{4}\beta^{3}})(\beta\partial_{\alpha}z(\gamma)\cdot\Delta\de{})\varpi(\gamma-\beta) d\alpha d\beta,\\
&L_{5}=-\frac{1}{\pi}\re\int_{\T}\int_{\R}\overline{\de{(\gamma)}}\cdot\frac{\partial_{\alpha}z(\gamma)^{\bot}}{\abs{\partial_{\alpha}z(\gamma)}^{4}\beta^{3}}(\Delta z\cdot\Delta\de{})\varpi(\gamma-\beta) d\alpha d\beta.
\end{align*}
We compute
 \begin{align}
 \label{c(gamma)}
 &C(\gamma,\beta)=\frac{(\Delta z)^{\bot}}{\abs{\Delta z}^{4}}-\frac{\partial_{\alpha}z(\gamma)^{\bot}}{\abs{\partial_{\alpha}z(\gamma)}^{4}\beta^{3}}=\frac{\beta^{2}\int _
{0}^{1}\int _
{0}^{1}\partial^{2}_{\alpha}z^{\bot}(\eta)(s-1)dsdt}{\abs{\Delta z}^{4}}\nonumber\\
 &+\frac{\beta^{2}\partial_{\alpha}z^{\bot}(\gamma)\int _
{0}^{1}\int _
{0}^{1}\partial^{2}_{\alpha}z(\psi)(1-s)dsdt\cdot\int _
{0}^{1}[\partial_{\alpha}z(\gamma)+\partial_{\alpha}z(\phi)]ds\int _
{0}^{1}[\abs{\partial_{\alpha}z(\gamma)}^{2}+\abs{\partial_{\alpha}z(\phi)}^{2}]ds}{\abs{\Delta z}^{4}\abs{\partial_{\alpha}z(\gamma)}^{4}}\\
 &\equiv C_1(\gamma,\beta)+C_2(\gamma,\beta)\nonumber
 \end{align}
 and
 \begin{align*}
 \Delta z-\beta\partial_{\alpha}z(\gamma)=\beta^{2}\int _
{0}^{1}\int _
{0}^{1}\partial^{2}_{\alpha}z(\eta)(s-1)dtds
 \end{align*}
 where $\eta=\gamma-t\beta+st\beta$, allowing us to obtain the desired estimate for the term $L_{3}$.
 
 Next we split $L_{4}=M_{5}+M_{6}$ since $\Delta\de{}=\de(\gamma)-\de(\gamma-\beta)$:
 \begin{align*}
 &M_{5}=-\frac{1}{\pi}\re\int_{\T}\int_{\R}\overline{\de{(\gamma)}}\cdot C(\gamma,\beta)(\beta\partial_{\alpha} z(\gamma)\cdot\de{(\gamma)})\varpi(\gamma-\beta) d\alpha d\beta,\\
 &M_{6}=\frac{1}{\pi}\re\int_{\T}\int_{\R}\overline{\de{(\gamma)}}\cdot C(\gamma,\beta)(\beta\partial_{\alpha} z(\gamma)\cdot\de{(\gamma-\beta)})\varpi(\gamma-\beta) d\alpha d\beta.\\
 \end{align*}
 
 By following the same approach for $L_{1}$ we have,
 
\begin{align*}
\abs{M_{5}}&\le C\norm{\F}_{L^{\infty}(S)}^{2}\norm{z}_{\mathcal{C}^{2}(S)}^{2}\norm{\varpi}_{L^{\infty}(S)}\norm{\de}^{2}_{\s}\\
&+\abs{\re\int_{\T}\overline{\de{(\gamma)}}\cdot \partial^{2}_{\alpha}z^{\bot}(\gamma)(\partial_{\alpha}z(\gamma)\cdot\de{(\gamma)})\frac{H(\varpi)(\gamma)}{\abs{\partial_{\alpha}z(\gamma)}^{4}} d\alpha}
\end{align*}
and
\begin{align*}
\abs{M_{6}}&\le C\norm{\F}_{L^{\infty}(S)}^{2}\norm{z}_{\mathcal{C}^{2}(S)}^{2}\norm{\varpi}_{L^{\infty}(S)}\norm{\de}^{2}_{\s}\\
&+\abs{\re\int_{\T}\overline{\de{(\gamma)}}\cdot \partial^{2}_{\alpha}z^{\bot}(\gamma)(\partial_{\alpha}z(\gamma)\cdot H(\de{})(\gamma))\frac{\varpi(\gamma)}{\abs{\partial_{\alpha}z(\gamma)}^{4}} d\alpha}.
\end{align*}
Then, the term $L_{4}$ is controlled.

To conclude the estimates of $K_{2}$, we need to see what happens with the term $L_{5}$

\begin{align*}
L_{5}&=-\frac{1}{\pi}\re\int_{\T}\int_{\R}\overline{\de{(\gamma)}}\cdot\frac{\partial^{\bot}_{\alpha}z(\gamma)}{\abs{\partial_{\alpha}z(\gamma)}^{4}}(\int _
{0}^{1}[\partial_{\alpha}z(\phi)-\partial_{\alpha}z(\gamma)]ds\cdot\Delta\de{})\frac{\varpi(\gamma-\beta)}{\beta^{2}} d\alpha d\beta\\
&-\frac{1}{\pi}\re\int_{\T}\int_{\R}\overline{\de{(\gamma)}}\cdot\frac{\partial^{\bot}_{\alpha}z(\gamma)}{\abs{\partial_{\alpha}z(\gamma)}^{4}}(\partial_{\alpha}z(\gamma)\cdot\Delta\de{})\frac{\varpi(\gamma-\beta)}{\beta^{2}} d\alpha d\beta\\
&\equiv M_{7}+M_{8}.
\end{align*}
For $M_{7}$ we proceed in the same way as in $L_{4}$ and we get:
\begin{align*}
M_{7}&\le\est{4}\\
&-\re\int_{\T}\overline{\de{(\gamma)}}\cdot\frac{\partial^{\bot}_{\alpha}z(\gamma)}{\abs{\partial_{\alpha}z(\gamma)}^{4}}(\partial^{2}_{\alpha}z(\gamma)\cdot\de{(\gamma)})H(\varpi)(\gamma) d\alpha\\
&+\re\int_{\T}\overline{\de{(\gamma)}}\cdot\frac{\partial^{\bot}_{\alpha}z(\gamma)}{\abs{\partial_{\alpha}z(\gamma)}^{4}}(\partial^{2}_{\alpha}z(\gamma)\cdot H(\de)(\gamma))\varpi(\gamma) d\alpha.
\end{align*}

Then, 
$$M_{7}\le\est{4}.$$

To control the term $M_{8}$, we decompose it as follows,
\begin{align*}
M_{8}&=-\frac{1}{\pi}\re\int_{\T}\int_{\R}\overline{\de{(\gamma)}}\cdot\frac{\partial^{\bot}_{\alpha}z(\gamma)}{\abs{\partial_{\alpha}z(\gamma)}^{4}}(\partial_{\alpha}z(\gamma)\cdot\Delta\de{})\frac{\varpi(\gamma-\beta)-\varpi(\gamma)}{\beta^{2}} d\alpha d\beta\\
&-\frac{1}{\pi}\re\int_{\T}\int_{\R}\overline{\de{(\gamma)}}\cdot\frac{\partial^{\bot}_{\alpha}z(\gamma)}{\abs{\partial_{\alpha}z(\gamma)}^{4}}(\partial_{\alpha}z(\gamma)\cdot\Delta\de{})\frac{\varpi(\gamma)}{\beta^{2}} d\alpha d\beta\\
&\equiv N_{7}+N_{8}.
\end{align*}

Since $\Delta\de{}=\de(\gamma)-\de(\gamma-\beta)$ we have,
\begin{align*}
N_{7}&=-\frac{1}{\pi}\re\int_{\T}\int_{\R}\overline{\de{(\gamma)}}\cdot\frac{\partial^{\bot}_{\alpha}z(\gamma)}{\abs{\partial_{\alpha}z(\gamma)}^{4}}(\partial_{\alpha}z(\gamma)\cdot\de{(\gamma)})\frac{\varpi(\gamma-\beta)-\varpi(\gamma)}{\beta^{2}} d\alpha d\beta\\
&+\frac{1}{\pi}\re\int_{\T}\int_{\R}\overline{\de{(\gamma)}}\cdot\frac{\partial^{\bot}_{\alpha}z(\gamma)}{\abs{\partial_{\alpha}z(\gamma)}^{4}}(\partial_{\alpha}z(\gamma)\cdot\de{(\gamma-\beta)})\frac{\varpi(\gamma-\beta)-\varpi(\gamma)}{\beta^{2}} d\alpha d\beta\\
&\equiv O_{1}+O_{2}
\end{align*}
where,
\begin{align*}
O_{1}&=-2\re\int_{\T}\overline{\de{(\gamma)}}\cdot\frac{\partial^{\bot}_{\alpha}z(\gamma)}{\abs{\partial_{\alpha}z(\gamma)}^{4}}(\partial_{\alpha}z(\gamma)\cdot\de{(\gamma)})\Lambda(\varpi)(\gamma) d\alpha d\beta\\
&\le C\norm{\F}_{L^{\infty}(S)}\norm{\de}^{2}_{\s}\norm{\Lambda\varpi}_{L^{\infty}(S)}\le C\norm{\F}_{L^{\infty}(S)}\norm{\de}^{2}_{\s}\norm{\varpi}_{\mathcal{C}^{1,\delta}(S)}\\
&\le\est{4}
\end{align*}
and
\begin{align*}
O_{2}&\le\est{4}\\
&+\re\int_{\T}\overline{\de{(\gamma)}}\cdot\frac{\partial^{\bot}_{\alpha}z(\gamma)}{\abs{\partial_{\alpha}z(\gamma)}^{4}}(\partial_{\alpha}z(\gamma)\cdot H(\de)(\gamma))\partial_{\alpha}\varpi(\gamma) d\alpha\\
&\le\est{4}.
\end{align*}

Using integration by parts for $\Lambda$,
\begin{align*}
N_{8}&=-2\re\int_{\T}\Lambda(\overline{\de{}}\cdot\frac{\partial_{\alpha}z^{\bot}}{A^{2}(t)}\varpi\partial_{\alpha}z)(\gamma)\cdot\de{}(\gamma)d\alpha\\
&=-2\re\int_{\T}(\Lambda(\overline{\de{}}\cdot\frac{\partial_{\alpha}z^{\bot}}{A^{2}(t)}\varpi\partial_{\alpha}z)(\gamma)-\partial_{\alpha}z(\gamma)\varpi(\gamma)\frac{\partial^{\bot}_{\alpha}z(\gamma)}{A^{2}(t)}\cdot\Lambda(\overline{\de{}})(\gamma))\cdot\de(\gamma)d\alpha\\
&-2\re\int_{\T}\frac{\partial^{\bot}_{\alpha}z(\gamma)}{A^{2}(t)}\cdot\Lambda(\overline{\de{}})(\gamma)\partial_{\alpha}z(\gamma)\cdot\de(\gamma)\varpi(\gamma)d\alpha\\
&\equiv O_{3}+O_{4}.
\end{align*}

Using the commutator estimate (\ref{conmutador}),
\begin{align*}
O_{3}\le\est{4}.
\end{align*}
Taking three derivatives of $A(t)=\abs{\partial_{\alpha}z}^{2}$ we take
$$\partial_{\alpha}z(\alpha)\cdot\partial^{4}_{\alpha}z(\alpha)=-3\partial^{2}_{\alpha}z(\alpha)\cdot\partial^{3}_{\alpha}z(\alpha).$$
Together with $\Lambda=\partial_{\alpha}H$ and integrating by parts
\begin{align*}
O_{4}&=-6\re\int_{\T} H(\overline{\de{}})(\gamma)\cdot\frac{\partial^{2}_{\alpha}z^{\bot}(\gamma)}{A^{2}(t)}\partial^{2}_{\alpha}z(\gamma)\cdot\partial^{3}_{\alpha}z(\gamma)\varpi(\gamma)d\alpha\\
&-6\re\int_{\T} H(\overline{\de{}})(\gamma)\cdot\frac{\partial^{\bot}_{\alpha}z(\gamma)}{A^{2}(t)}\partial^{3}_{\alpha}z(\gamma)\cdot\partial^{3}_{\alpha}z(\gamma)\varpi(\gamma)d\alpha\\
&-6\re\int_{\T} H(\overline{\de{}})(\gamma)\cdot\frac{\partial^{\bot}_{\alpha}z(\gamma)}{A^{2}(t)}\partial^{2}_{\alpha}z(\gamma)\cdot\partial^{4}_{\alpha}z(\gamma)\varpi(\gamma)d\alpha\\
&-6\re\int_{\T} H(\overline{\de{}})(\gamma)\cdot\frac{\partial^{\bot}_{\alpha}z(\gamma)}{A^{2}(t)}\partial^{2}_{\alpha}z(\gamma)\cdot\partial^{3}_{\alpha}z(\gamma)\partial_{\alpha}\varpi(\gamma)d\alpha\\
&\le\est{4}.
\end{align*}

Then,
\begin{displaymath}
L_{5}\le\est{4}.
\end{displaymath}

All previous discussion shows that $I_{3}$ satisfies,
$$I_{3}\le\est{4}+C\norm{\im(\frac{\varpi}{A(t)})}_{H^{2}(S)}\norm{\Lambda^{\frac{1}{2}}(\de)}^{2}_{\s}.$$

\subsubsection{Searching for the Raylegh-Taylor condition in $I_{7}$}\label{look}

Let us recall the formula for the Raylegh-Taylor condition
$$\sigma(\alpha,t)=\frac{\mu^{2}}{\kappa}BR(z,\varpi)(\alpha,t)\cdot\partial^{\bot}_{\alpha}z(\alpha,t)+g\rho^{2}\partial_{\alpha}\z(\alpha,t).$$
We write $I_{7}$ in the form $I_{7}=K_{8}+K_{9}$ where
\begin{align*}
&K_{8}=\frac{1}{2\pi}\re\int_{\T}\int_{\R}\overline{\de(\gamma)}\cdot(\frac{(z(\gamma)-z(\gamma-\beta))^{\bot}}{\abs{z(\gamma)-z(\gamma-\beta)}^{2}}-\frac{\partial^{\bot}_{\alpha}z(\gamma)}{\abs{\partial_{\alpha}z(\gamma)}^{2}\beta})\partial^{4}_{\alpha}\varpi(\gamma-\beta)d\alpha d\beta,\\
&K_{9}=\frac{1}{2\pi}\re\int_{\T}\int_{\R}\overline{\de(\gamma)}\cdot\frac{\partial^{\bot}_{\alpha}z(\gamma)}{\abs{\partial_{\alpha}z(\gamma)}^{2}\beta}\partial^{4}_{\alpha}\varpi(\gamma-\beta)d\alpha d\beta.
\end{align*}

After an integration by parts we obtain:
\begin{align*}
K_{8}&=-\frac{1}{2\pi}\re\int_{\T}\int_{\R}\overline{\de(\gamma)}\cdot(\frac{(\Delta z)^{\bot}}{\abs{\Delta z}^{2}}-\frac{\partial^{\bot}_{\alpha}z(\gamma)}{\abs{\partial_{\alpha}z(\gamma)}^{2}\beta})\partial_{\beta}(\partial^{3}_{\alpha}\varpi(\gamma-\beta))d\alpha d\beta\\
&=\frac{1}{2\pi}\re\int_{\T}\int_{\R}\overline{\de(\gamma)}\cdot\partial_{\beta}(\frac{(\Delta z)^{\bot}}{\abs{\Delta z}^{2}}-\frac{\partial^{\bot}_{\alpha}z(\gamma)}{\abs{\partial_{\alpha}z(\gamma)}^{2}\beta})\partial^{3}_{\alpha}\varpi(\gamma-\beta)d\alpha d\beta.
\end{align*}

We decompose
\begin{align}
&\partial_{\beta}(\frac{(\Delta z)^{\bot}}{\abs{\Delta z}^{2}}-\frac{\partial^{\bot}_{\alpha}z(\gamma)}{\abs{\partial_{\alpha}z(\gamma)}^{2}\beta})\nonumber\\
&=\frac{(\Delta\partial_{\alpha}z)^{\bot}}{\abs{\Delta z}^{2}}+\partial^{\bot}_{\alpha}z(\gamma)(\frac{1}{\abs{\Delta z}^{2}}-\frac{1}{\abs{\partial_{\alpha}z(\gamma)}^{2}\beta^{2}})-2\frac{(\Delta z)^{\bot}\Delta z\cdot\Delta\partial_{\alpha}z}{\abs{\Delta z}^{4}}\nonumber\\
&-2\frac{(\Delta z)^{\bot}(\Delta z-\beta\partial_{\alpha}z(\gamma))\cdot\partial_{\alpha}z(\gamma)}{\abs{\Delta z}^{4}}-2\frac{(\Delta z-\beta\partial_{\alpha}z(\gamma))^{\bot}\beta\abs{\partial_{\alpha}z(\gamma)}^{2}}{\abs{\Delta z}^{4}}\nonumber\\
&+(\frac{2\partial^{\bot}_{\alpha}z(\gamma)}{\abs{\partial_{\alpha}z(\gamma)}^{2}\beta^{2}}-\frac{2\beta^{2}\partial^{\bot}_{\alpha}z(\gamma)\abs{\partial_{\alpha}z(\gamma)}^{2}}{\abs{\Delta z}^{4}})\nonumber\\
&\equiv F_{1}(\gamma,\beta)+F_{2}(\gamma,\beta)+F_{3}(\gamma,\beta)+F_{4}(\gamma,\beta)+F_{5}(\gamma,\beta)+F_{6}(\gamma,\beta)\label{descomposicion}
\end{align}

Then, we have
\begin{align*}
K_{8}&=\frac{1}{2\pi}\re\int_{\T}\int_{\R}\overline{\de(\gamma)}\cdot F_{1}(\gamma,\beta)\partial^{3}_{\alpha}\varpi(\gamma-\beta)d\alpha d\beta\\
&+\frac{1}{2\pi}\re\int_{\T}\int_{\R}\overline{\de(\gamma)}\cdot F_{2}(\gamma,\beta)\partial^{3}_{\alpha}\varpi(\gamma-\beta)d\alpha d\beta\\
&+\frac{1}{2\pi}\re\int_{\T}\int_{\R}\overline{\de(\gamma)}\cdot F_{3}(\gamma,\beta)\partial^{3}_{\alpha}\varpi(\gamma-\beta)d\alpha d\beta\\
&+\frac{1}{2\pi}\re\int_{\T}\int_{\R}\overline{\de(\gamma)}\cdot F_{4}(\gamma,\beta)\partial^{3}_{\alpha}\varpi(\gamma-\beta)d\alpha d\beta\\
&+\frac{1}{2\pi}\re\int_{\T}\int_{\R}\overline{\de(\gamma)}\cdot F_{5}(\gamma,\beta)\partial^{3}_{\alpha}\varpi(\gamma-\beta)d\alpha d\beta\\
&+\frac{1}{2\pi}\re\int_{\T}\int_{\R}\overline{\de(\gamma)}\cdot F_{6}(\gamma,\beta)\partial^{3}_{\alpha}\varpi(\gamma-\beta)d\alpha d\beta\\
&\equiv P_{1}+P_{2}+P_{3}+P_{4}+P_{5}+P_{6}.
\end{align*}

For $P_{1}, P_{2}, P_{3}, P_{4}$ and $P_{5}$ we can estimates with the same approach as before, and we easily get
\begin{displaymath}
P_{1}+P_{2}+P_{3}+P_{4}+P_{5}\le\est{4}.
\end{displaymath} 

% Since,
% \begin{align*}
 %&-\frac{1}{2}F_{6}(\gamma,\beta)=\partial^{\bot}_{\alpha}z(\gamma)\frac{\beta^{4}\abs{\partial_{\alpha}z(\gamma)}^{4}-\abs{\Delta z}^{4}}{\abs{\Delta z}^{4}\abs{\partial_{\alpha}z(\gamma)}^{2}\beta^{2}}\\
 %&=\partial^{\bot}_{\alpha}z(\gamma)\frac{\beta^{3}\int _{0}^{1}\int _{0}^{1}(\partial^{2}_{\alpha}z(\psi)-\partial^{2}_{\alpha}z(\gamma))(1-s)dtds\int _{0}^{1}[\partial_{\alpha}z(\gamma)+\partial_{\alpha}z(\phi)]ds\int _{0}^{1}[\abs{\partial_{\alpha}z(\gamma)}^{2}+\abs{\partial_{\alpha}z(\phi)}^{2}]ds}{\abs{\Delta z}^{4}\abs{\partial_{\alpha}z(\gamma)}^{2}}\\
 %&+\frac{\partial^{\bot}_{\alpha}z(\gamma)}{2}\frac{\beta^{4}\partial^{2}_{\alpha}z(\gamma)\int _{0}^{1}\int _{0}^{1}\partial^{2}_{\alpha}z(\eta)(s-1)dtds\int _{0}^{1}[\abs{\partial_{\alpha}z(\gamma)}^{2}+\abs{\partial_{\alpha}z(\phi)}^{2}]ds}{\abs{\Delta z}^{4}\abs{\partial_{\alpha}z(\gamma)}^{2}}\\
 %&+\partial^{\bot}_{\alpha}z(\gamma)\frac{\beta^{4}\partial^{2}_{\alpha}z(\gamma)\partial_{\alpha}z(\gamma)\int _{0}^{1}\int _{0}^{1}\partial_{\alpha}z(\eta)\cdot\partial^{2}_{\alpha}z(\eta)(s-1)dtds}{\abs{\Delta z}^{4}\abs{\partial_{\alpha}z(\gamma)}^{2}}+\partial^{\bot}_{\alpha}z(\gamma)\frac{\beta^{3}\partial^{2}_{\alpha}z(\gamma)\partial_{\alpha}z(\gamma)}{\abs{\Delta z}^{4}}\\
 %&\equiv U_{1}(\gamma,\beta)+U_{2}(\gamma,\beta)+U_{3}(\gamma,\beta)+U_{4}(\gamma,\beta)
 %\end{align*}
 
 Since,
 \begin{align*}
&-\frac{1}{2}F_{6}(\gamma,\beta)=\partial^{\bot}_{\alpha}z(\gamma)\frac{\beta^{4}\abs{\partial_{\alpha}z(\gamma)}^{4}-\abs{\Delta z}^{4}}{\abs{\Delta z}^{4}\abs{\partial_{\alpha}z(\gamma)}^{2}\beta^{2}}=U_{1}(\gamma,\beta)\\
 &+\frac{\partial^{\bot}_{\alpha}z(\gamma)}{2}\frac{\beta^{4}\partial^{2}_{\alpha}z(\gamma)\int _
{0}^{1}\int _
{0}^{1}\partial^{2}_{\alpha}z(\eta)(s-1)dtds\int _
{0}^{1}[\abs{\partial_{\alpha}z(\gamma)}^{2}+\abs{\partial_{\alpha}z(\phi)}^{2}]ds}{\abs{\Delta z}^{4}\abs{\partial_{\alpha}z(\gamma)}^{2}}\\
 &+\partial^{\bot}_{\alpha}z(\gamma)\frac{\beta^{4}\partial^{2}_{\alpha}z(\gamma)\partial_{\alpha}z(\gamma)\int _
{0}^{1}\int _
{0}^{1}\partial_{\alpha}z(\eta)\cdot\partial^{2}_{\alpha}z(\eta)(s-1)dtds}{\abs{\Delta z}^{4}\abs{\partial_{\alpha}z(\gamma)}^{2}}+\partial^{\bot}_{\alpha}z(\gamma)\frac{\beta^{3}\partial^{2}_{\alpha}z(\gamma)\partial_{\alpha}z(\gamma)}{\abs{\Delta z}^{4}}\\
 &\equiv U_{1}(\gamma,\beta)+U_{2}(\gamma,\beta)+U_{3}(\gamma,\beta)+U_{4}(\gamma,\beta)
 \end{align*}
 where $U_{1}(\gamma,\beta)$ is the remainder term that does not cause any trouble.
 we get,
 \begin{align*}
 P_{6}=&-\frac{1}{\pi}\re\int_{\T}\int_{\R}\overline{\de(\gamma)}\cdot U_{1}(\gamma,\beta)\partial^{3}_{\alpha}\varpi(\gamma-\beta)d\alpha d\beta\\
 &-\frac{1}{\pi}\re\int_{\T}\int_{\R}\overline{\de(\gamma)}\cdot U_{2}(\gamma,\beta)\partial^{3}_{\alpha}\varpi(\gamma-\beta)d\alpha d\beta\\
 &-\frac{1}{\pi}\re\int_{\T}\int_{\R}\overline{\de(\gamma)}\cdot U_{3}(\gamma,\beta)\partial^{3}_{\alpha}\varpi(\gamma-\beta)d\alpha d\beta\\
 &-\frac{1}{\pi}\re\int_{\T}\int_{\R}\overline{\de(\gamma)}\cdot U_{4}(\gamma,\beta)\partial^{3}_{\alpha}\varpi(\gamma-\beta)d\alpha d\beta\\
 &\equiv Q_{1}+Q_{2}+Q_{3}+Q_{4}
 \end{align*}
 where
 \begin{align*}
 &Q_{1}\le C\norm{\F}^{2}_{L^{\infty}(S)}\norm{z}^{2}_{\mathcal{C}^{1}(S)}\norm{z}_{\mathcal{C}^{3}(S)}\norm{\de}_ {\s}\norm{\partial^{3}_{\alpha}\varpi}_{\s},\\
 &Q_{2}\le C\norm{\F}^{2}_{L^{\infty}(S)}\norm{z}^{2}_{\mathcal{C}^{2}(S)}\norm{z}_{\mathcal{C}^{1}(S)}\norm{\de}_ {\s}\norm{\partial^{3}_{\alpha}\varpi}_{\s},\\
 &Q_{3}\le C\norm{\F}^{2}_{L^{\infty}(S)}\norm{z}_{\mathcal{C}^{1}(S)}\norm{z}_{\mathcal{C}^{2}(S)}\norm{\de}_ {\s}\norm{\partial^{3}_{\alpha}\varpi}_{\s}
 \end{align*}
 
 and if we split,
 \begin{align*}
 Q_{4}&=-\frac{1}{\pi}\re\int_{\T}\int_{\R}\overline{\de(\gamma)}\cdot\partial^{\bot}_{\alpha}z(\gamma)\beta^{3}\partial^{2}_{\alpha}z(\gamma)\partial_{\alpha}z(\gamma)\partial^{3}_{\alpha}\varpi(\gamma-\beta)(\frac{1}{\abs{\Delta z}^{4}}-\frac{1}{\abs{\partial_{\alpha}z(\gamma)}^{4}\beta^{4}})d\alpha d\beta\\
 &-\frac{1}{\pi}\re\int_{\T}\overline{\de(\gamma)}\cdot\frac{\partial^{\bot}_{\alpha}z(\gamma)\partial^{2}_{\alpha}z(\gamma)\partial_{\alpha}z(\gamma)}{\abs{\partial_{\alpha}z(\gamma)}^{4}}H(\partial^{3}_{\alpha}\varpi)(\gamma)d\alpha.
 \end{align*}
 
 It is clear that
 $$Q_{4}\le\est{4}.$$
 
Thus,
$$P_{6}\le\est{4}.$$

Therefore,
$$K_{8}\le\est{4}.$$

We consider now the $K_{9}$ term which can be written as follows 
\begin{align*}
K_{9}&=\frac{1}{2}\re\int_{\T}\overline{\de(\gamma)}\cdot\frac{\partial^{\bot}_{\alpha}z(\gamma)}{\abs{\partial_{\alpha}z(\gamma)}^{2}}H(\partial^{4}_{\alpha}\varpi)(\gamma)d\alpha=\frac{1}{2}\re\int_{\T}\overline{\de(\gamma)}\cdot\frac{\partial^{\bot}_{\alpha}z(\gamma)}{\abs{\partial_{\alpha}z(\gamma)}^{2}}\Lambda(\partial^{3}_{\alpha}\varpi)(\gamma)d\alpha\\
&=\frac{1}{2}\re\int_{\T}\frac{\Lambda(\overline{\de{}}\cdot\partial_{\alpha}z^{\bot})(\gamma)}{A(t)}\partial^{3}_{\alpha}\varpi(\gamma)d\alpha
\end{align*}

using the formula $$\varpi(\alpha)=-2BR(z,\varpi)(\alpha,t)\cdot\partial_{\alpha}z(\alpha,t)-2\kappa g\frac{\rho^{2}}{\mu^{2}}\partial_{\alpha}\zz(\alpha,t)=-T(\varpi)(\alpha)-2g\kappa\frac{\rho^{2}}{\mu^{2}}\partial_{\alpha}z_{2}(\alpha)$$
we separate $K_{9}$ as a sum of two parts, $P_{7}$ and $P_{8}$, where
\begin{align*}
&P_{7}=-g\kappa\frac{\rho^{2}}{\mu^{2}}\re\int_{\T}\frac{\Lambda(\overline{\de{}}\cdot\partial_{\alpha}z^{\bot})(\gamma)}{A(t)}\partial^{4}_{\alpha}\zz(\gamma)d\alpha,\\
&P_{8}=-\frac{1}{2}\re\int_{\T}\frac{\Lambda(\overline{\de{}}\cdot\partial_{\alpha}z^{\bot})(\gamma)}{A(t)}\partial^{3}_{\alpha}T(\varpi)(\gamma)d\alpha.
\end{align*}

For $P_{7}$ we decompose further $P_{7}=Q_{5}+Q_{6}$ where

\begin{align*}
&Q_{5}=g\kappa\frac{\rho^{2}}{\mu^{2}}\re\int_{\T}\frac{\Lambda(\overline{\de_{1}}\partial_{\alpha}\zz)(\gamma)}{A(t)}\partial^{4}_{\alpha}\zz(\gamma)d\alpha,\\
&Q_{6}=-g\kappa\frac{\rho^{2}}{\mu^{2}}\re\int_{\T}\frac{\Lambda(\overline{\de_{2}}\partial_{\alpha}\z)(\gamma)}{A(t)}\partial^{4}_{\alpha}\zz(\gamma)d\alpha.
\end{align*}

Then $Q_{5}$ is written as $Q_{5}=R_{1}+R_{2}$ with
\begin{align*}
&R_{1}=g\kappa\frac{\rho^{2}}{\mu^{2}}\re\int_{\T}\frac{\Lambda(\overline{\de_{1}}\partial_{\alpha}\zz)(\gamma)-\partial_{\alpha}\zz(\gamma)\Lambda(\overline{\de_{1}})(\gamma)}{A(t)}\partial^{4}_{\alpha}\zz(\gamma)d\alpha,\\
&R_{2}=g\kappa\frac{\rho^{2}}{\mu^{2}}\re\int_{\T}\frac{\partial_{\alpha}\zz(\gamma)\Lambda(\overline{\de_{1}})(\gamma)}{A(t)}\partial^{4}_{\alpha}\zz(\gamma)d\alpha.
\end{align*}

Using the commutator estimate (\ref{conmutador}), we get
$$R_{1}\le C\norm{\F}_{L^{\infty}(S)}\norm{z}_{\mathcal{C}^{2,\delta}(S)}\norm{\de}^{2}_{\s}\le\est{4}.$$

The identity
$$\partial_{\alpha}\zz(\gamma)\de_{2}(\gamma)=\partial_{\alpha}z(\gamma)\cdot\de{(\gamma)}-\partial_{\alpha}\z(\gamma)\de_{1}(\gamma)$$

let us write $R_{2}$ as the sum of $S_{1}$ and $S_{2}$ where
\begin{align*}
&S_{1}=g\kappa\frac{\rho^{2}}{\mu^{2}}\re\int_{\T}\frac{\Lambda(\overline{\de_{1}})(\gamma)}{A(t)}\partial_{\alpha}z(\gamma)\cdot\de{(\gamma)}d\alpha,\\
&S_{2}=-g\kappa\frac{\rho^{2}}{\mu^{2}}\re\int_{\T}\frac{\Lambda(\overline{\de_{1}})(\gamma)}{A(t)}\partial_{\alpha}\z(\gamma)\de_{1}(\gamma)d\alpha.
\end{align*}
For $S_{1}$ we use an integration by parts and
$$\partial_{\alpha}z(\gamma)\cdot\de(\gamma)=-3\partial^{2}_{\alpha}z(\gamma)\cdot\partial^{3}_{\alpha}z(\gamma)$$
we get,
\begin{align*}
S_{1}&=-3g\kappa\frac{\rho^{2}}{\mu^{2}}\re\int_{\T}\frac{\Lambda(\overline{\de_{1}})(\gamma)}{A(t)}\partial^{2}_{\alpha}z(\gamma)\cdot\partial^{3}_{\alpha}z(\gamma)d\alpha\\
&=3g\kappa\frac{\rho^{2}}{\mu^{2}}\re\int_{\T}\frac{H(\overline{\de_{1}})(\gamma)}{A(t)}\partial^{3}_{\alpha}z(\gamma)\cdot\partial^{3}_{\alpha}z(\gamma)d\alpha\\
&+3g\kappa\frac{\rho^{2}}{\mu^{2}}\re\int_{\T}\frac{H(\overline{\de_{1}})(\gamma)}{A(t)}\partial^{2}_{\alpha}z(\gamma)\cdot\partial^{4}_{\alpha}z(\gamma)d\alpha\\
&\le\est{4}.
\end{align*}

	And writing $Q_{6}$ in the form
\begin{align*}
Q_{6}&=-g\kappa\frac{\rho^{2}}{\mu^{2}}\re\int_{\T}\frac{\Lambda(\overline{\de_{2}}\cdot\partial_{\alpha}\z)(\gamma)-\partial_{\alpha}\z(\gamma)\Lambda(\overline{\de_{2}})(\gamma)}{A(t)}\partial^{4}_{\alpha}\zz(\gamma)d\alpha\\
&-g\kappa\frac{\rho^{2}}{\mu^{2}}\re\int_{\T}\frac{\partial_{\alpha}\z(\gamma)}{A(t)}\Lambda(\overline{\de_{2}})(\gamma)\partial^{4}_{\alpha}\zz(\gamma)d\alpha\equiv R_{3}+R_{4},
\end{align*}

by the commutator estimate, we have
\begin{align*}
R_{3}\le C\norm{\F}_{L^{\infty}(S)}\norm{z}_{\mathcal{C}^{2,\delta}(S)}\norm{\de}^{2}_{\s}\le\est{4}.
\end{align*}

Since,
\begin{align*}
S_{2}+R_{4}=-g\kappa\frac{\rho^{2}}{\mu^{2}}\re\int_{\T}\frac{\partial_{\alpha}\z(\gamma)}{A(t)}\de{(\gamma)}\cdot\Lambda(\overline{\de{}})(\gamma)d\alpha
\end{align*}
we obtain finally
\begin{align*}
P_{7}&\le\est{4}-g\kappa\frac{\rho^{2}}{\mu^{2}}\re\int_{\T}\frac{\partial_{\alpha}\z(\gamma)}{A(t)}\de{(\gamma)}\cdot\Lambda(\overline{\de{}})(\gamma)d\alpha.
\end{align*}

In the estimate above we can observe how part of $\sigma(\gamma)$ appears in the non-integrable terms.

Let us return to $P_{8}=Q_{7}+Q_{8}+Q_{9}+Q_{10}$ where
\begin{align*}
&Q_{7}=-\re\int_{\T}\frac{\Lambda(\overline{\de{}}\cdot\partial_{\alpha}z^{\bot})(\gamma)}{A(t)}\partial^{3}_{\alpha}BR(z,\varpi)(\gamma)\cdot\partial_{\alpha}z(\gamma)d\alpha,\\
&Q_{8}=-3\re\int_{\T}\frac{\Lambda(\overline{\de{}}\cdot\partial_{\alpha}z^{\bot})(\gamma)}{A(t)}\partial^{2}_{\alpha}BR(z,\varpi)(\gamma)\cdot\partial^{2}_{\alpha}z(\gamma)d\alpha,\\
&Q_{9}=-3\re\int_{\T}\frac{\Lambda(\overline{\de{}}\cdot\partial_{\alpha}z^{\bot})(\gamma)}{A(t)}\partial_{\alpha}BR(z,\varpi)(\gamma)\cdot\partial^{3}_{\alpha}z(\gamma)d\alpha,\\
&Q_{10}=-\re\int_{\T}\frac{\Lambda(\overline{\de{}}\cdot\partial_{\alpha}z^{\bot})(\gamma)}{A(t)}BR(z,\varpi)(\gamma)\cdot\de(\gamma)d\alpha.\\
\end{align*}

We will control first the terms $Q_{8},Q_{7}$ and $Q_{9}$ and then we will show how the rest of $\sigma(\gamma)$ appears in $Q_{10}$.

Using $\Lambda=H\partial_{\alpha}$ and integrating by parts, we obtain
\begin{align*}
Q_{8}&=3\re\int_{\T}\frac{H(\overline{\de{}}\cdot\partial_{\alpha}z^{\bot})(\gamma)}{A(t)}\partial^{3}_{\alpha}BR(z,\varpi)(\gamma)\cdot\partial^{2}_{\alpha}z(\gamma)d\alpha\\
&+3\re\int_{\T}\frac{H(\overline{\de{}}\cdot\partial_{\alpha}z^{\bot})(\gamma)}{A(t)}\partial^{2}_{\alpha}BR(z,\varpi)(\gamma)\cdot\partial^{3}_{\alpha}z(\gamma)d\alpha\\
\equiv R_{5}+R_{6}.
\end{align*}

With (\ref{bir})
\begin{align*}
&R_{5}\le C\norm{\F}_{L^{\infty}(S)}\norm{z}_{\mathcal{C}^{2}(S)}\norm{\partial^{3}_{\alpha}BR}_{\s}\norm{\de\cdot\partial_{\alpha}z}_{\s}\\
&\le\est{4}
\end{align*}
and
\begin{align*}
&R_{6}\le C\norm{\F}_{L^{\infty}(S)}\norm{z}_{\mathcal{C}^{3}(S)}\norm{\partial^{2}_{\alpha}BR}_{\s}\norm{\de\cdot\partial_{\alpha}z}_{\s}\\
&\le\est{4}.
\end{align*}

With $Q_{7}$ we also integrate by parts to obtain $Q_{7}=R_{7}+R_{8}$ where
\begin{align*}
&R_{7}=\re\int_{\T}\frac{H(\overline{\de{}}\cdot\partial_{\alpha}z^{\bot})(\gamma)}{A(t)}\partial^{4}_{\alpha}BR(z,\varpi)(\gamma)\cdot\partial_{\alpha}z(\gamma)d\alpha,\\
&R_{8}=\re\int_{\T}\frac{H(\overline{\de{}}\cdot\partial_{\alpha}z^{\bot})(\gamma)}{A(t)}\partial^{3}_{\alpha}BR(z,\varpi)(\gamma)\cdot\partial^{2}_{\alpha}z(\gamma)d\alpha.
\end{align*}

Easily we have
\begin{align*}
R_{8}&\le C\norm{\F}_{L^{\infty}(S)}\norm{z}_{\mathcal{C}^{2}(S)}\norm{\de\cdot\partial_{\alpha}z}_{\s}\norm{\partial^{3}_{\alpha}BR}_{\s}\\
&\le\est{4}.
\end{align*}
In $R_{7}$ the application of Leibniz's rule to $\partial^{3}_{\alpha}BR(z,\varpi)$ produces many terms which can be estimated with the same tools used before. For the most singular terms we have the expressions:
\begin{align*}
&S_{3}=\re\int_{\T}\frac{H(\overline{\de{}}\cdot\partial_{\alpha}z^{\bot})(\gamma)}{A(t)}\partial_{\alpha}BR(z,\partial^{3}_{\alpha}\varpi)(\gamma)\cdot\partial_{\alpha}z(\gamma)d\alpha,\\
&S_{4}=\re\int_{\T}\frac{H(\overline{\de{}}\cdot\partial_{\alpha}z^{\bot})(\gamma)}{A(t)}\int_{\T}\frac{\Delta\de}{\abs{\Delta z}^{2}}\cdot\partial_{\alpha}z(\gamma)\varpi(\gamma-\beta)d\beta d\alpha,\\
&S_{5}=-2\re\int_{\T}\frac{H(\overline{\de{}}\cdot\partial_{\alpha}z^{\bot})(\gamma)}{A(t)}\int_{\T}\frac{\Delta z^{\bot}\cdot\partial_{\alpha}z(\gamma)}{\abs{\Delta z}^{4}}(\Delta z\cdot\Delta\de)\varpi(\gamma-\beta)d\alpha d\beta.
\end{align*}

Let us consider
\begin{align*}
\partial_{\alpha}BR(z,\varpi)(\gamma)\cdot\partial_{\alpha}z(\gamma)&=\partial_{\alpha}(BR(z,\varpi)(\gamma)\cdot\partial_{\alpha}z(\gamma))-BR(z,\varpi)(\gamma)\cdot\partial^{2}_{\alpha}z(\gamma)\\
&=\frac{1}{2}\partial_{\alpha}T(\varpi)(\gamma)-BR(z,\varpi)\cdot\partial^{2}_{\alpha}z(\gamma)
\end{align*}

which yields
\begin{align*}
S_{3}&\le C\norm{\F}_{L^{\infty}(S)}\norm{\de}_{\s}\norm{z}_{\mathcal{C}^{1}(S)}(\norm{T(\partial^{3}_{\alpha}\varpi)}_{H^{1}(S)}+\norm{BR(z,\partial^{3}_{\alpha}\varpi)}_{\s}\norm{z}_{\mathcal{C}^{2}(S)})\\
&\le\est{4}
\end{align*}

because $\norm{T}_{L^{2}\to H^{1}}\le\norm{\F}^{2}_{L^{\infty}}\norm{z}^{4}_{\mathcal{C}^{2,\delta}}$ for more details see Lemma $3.1$ in \cite{hele}.

Next we write $S_{4}=T_{1}+T_{2}$,
\begin{align*}
&T_{1}=\re\int_{\T}\int_{\R}\frac{H(\overline{\de{}}\cdot\partial_{\alpha}z^{\bot})(\gamma)}{A(t)}\Delta\de\cdot\partial_{\alpha}z(\gamma)\varpi(\gamma-\beta)(\frac{1}{\abs{\Delta z}^{2}}-\frac{1}{\abs{\partial_{\alpha}z(\gamma)}^{2}\beta^{2}})d\beta d\alpha,\\
&T_{2}=\re\int_{\T}\int_{\R}\frac{H(\overline{\de{}}\cdot\partial_{\alpha}z^{\bot})(\gamma)}{A(t)}\Delta\de\cdot\partial_{\alpha}z(\gamma)\frac{\varpi(\gamma-\beta)}{A(t)\beta^{2}}d\beta d\alpha.
\end{align*}

Using $B_2(\gamma,\beta)=B_3(\gamma,\beta)+B_4(\gamma,\beta)$, we split $T_{1}=U_{1}+U_{2}+U_{3}$,
\begin{align*}
&U_{1}=\re\int_{\T}\int_{\R}\frac{H(\overline{\de{}}\cdot\partial_{\alpha}z^{\bot})(\gamma)}{A(t)}\Delta\de\cdot\partial_{\alpha}z(\gamma)\varpi(\gamma-\beta)B_1(\gamma,\beta)d\beta d\alpha,\\
&U_{2}=\re\int_{\T}\int_{\R}\frac{H(\overline{\de{}}\cdot\partial_{\alpha}z^{\bot})(\gamma)}{A(t)}\Delta\de\cdot\partial_{\alpha}z(\gamma)\varpi(\gamma-\beta)B_3(\gamma,\beta)d\beta d\alpha,\\
&U_{3}=\re\int_{\T}\int_{\R}\frac{H(\overline{\de{}}\cdot\partial_{\alpha}z^{\bot})(\gamma)}{A(t)}\Delta\de\cdot\partial_{\alpha}z(\gamma)\varpi(\gamma-\beta)B_4(\gamma,\beta)d\beta d\alpha.
\end{align*}

being
\begin{align*}
&B_1(\gamma,\beta)=\frac{\beta\int _
{0}^{1}\int _
{0}^{1}\frac{\partial^{2}_{\alpha}z(\psi)-\partial^{2}_{\alpha}z(\gamma)}{\abs{\psi-\gamma}^{\delta}}\beta^{\delta}(1+s+t-st)^{\delta}(1-s)dtds\int _
{0}^{1}[\partial_{\alpha}z(\gamma)+\partial_{\alpha}z(\phi)]ds}{\abs{z(\gamma)-z(\gamma-\beta)}^{2}\abs{\partial_{\alpha}z(\gamma)}^{2}},\\
&B_3(\gamma,\beta)=\frac{\beta^{2}\partial^{2}_{\alpha}z(\gamma)\int _
{0}^{1}\int _
{0}^{1}\partial^{2}_{\alpha}z(\eta)(s-1)dtds}{\abs{z(\gamma)-z(\gamma-\beta)}^{2}\abs{\partial_{\alpha}z(\gamma)}^{2}},\\
&B_4(\gamma,\beta)=\frac{\beta\partial^{2}_{\alpha}z(\gamma)2\partial_{\alpha}z(\gamma)}{\abs{z(\gamma)-z(\gamma-\beta)}^{2}\abs{\partial_{\alpha}z(\gamma)}^{2}}
\end{align*}

therefore
\begin{align*}
&U_{1}\le C\norm{\F}^{2}_{L^{\infty}(S)}\norm{z}_{\mathcal{C}^{1}(S)}\norm{z}_{\mathcal{C}^{2,\delta}(S)}\norm{\varpi}_{L^{\infty}(S)}\norm{\de}^{2}_{\s},\\
&U_{2}\le C\norm{\F}^{2}_{L^{\infty}(S)}\norm{z}_{\mathcal{C}^{1}(S)}\norm{z}^{2}_{\mathcal{C}^{2}(S)}\norm{\varpi}_{L^{\infty}(S)}\norm{\de}^{2}_{\s}
\end{align*}

and
\begin{align*}
U_{3}&=2\re\int_{\T}\int_{\R}\frac{H(\overline{\de{}}\cdot\partial_{\alpha}z^{\bot})(\gamma)}{A^{2}(t)}\Delta\de\cdot\partial_{\alpha}z(\gamma)\varpi(\gamma-\beta)\beta\partial^{2}_{\alpha}z(\gamma)\partial_{\alpha}z(\gamma)B(\gamma,\beta)d\beta d\alpha\\
&+2\re\int_{\T}\int_{\R}\frac{H(\overline{\de{}}\cdot\partial_{\alpha}z^{\bot})(\gamma)}{A^{2}(t)}\Delta\de\cdot\partial_{\alpha}z(\gamma)\varpi(\gamma-\beta)\frac{\partial^{2}_{\alpha}z(\gamma)\partial_{\alpha}z(\gamma)}{\abs{\partial_{\alpha}z(\gamma)}^{2}\beta}d\beta d\alpha\equiv V_{1}+V_{2}.
\end{align*}
Recall that 
$$B(\gamma,\beta)\equiv\frac{\beta\int _
{0}^{1}\int _
{0}^{1}\partial^{2}_{\alpha}z(\psi)(1-s)dtds\int _
{0}^{1}\partial_{\alpha}z(\gamma)+\partial_{\alpha}z(\phi)ds}{\abs{z(\gamma)-z(\gamma-\beta)}^{2}\abs{\partial_{\alpha}z(\gamma)}^{2}},$$
then the term $V_{1}$ is controlled.

We split $V_{2}=W_{1}+W_{2}$ where
\begin{align*}
&W_{1}=2\re\int_{\T}\int_{\R}\frac{H(\overline{\de{}}\cdot\partial_{\alpha}z^{\bot})(\gamma)}{A^{2}(t)}\de{(\gamma)}\cdot\partial_{\alpha}z(\gamma)\varpi(\gamma-\beta)\frac{\partial^{2}_{\alpha}z(\gamma)\partial_{\alpha}z(\gamma)}{\abs{\partial_{\alpha}z(\gamma)}^{2}\beta}d\beta d\alpha,\\
&W_{2}=-2\re\int_{\T}\int_{\R}\frac{H(\overline{\de{}}\cdot\partial_{\alpha}z^{\bot})(\gamma)}{A^{2}(t)}\de{(\gamma-\beta)}\cdot\partial_{\alpha}z(\gamma)\varpi(\gamma-\beta)\frac{\partial^{2}_{\alpha}z(\gamma)\partial_{\alpha}z(\gamma)}{\abs{\partial_{\alpha}z(\gamma)}^{2}\beta}d\beta d\alpha.
\end{align*}

Easily
\begin{align*}
W_{1}&=2\re\int_{\T}\frac{H(\overline{\de{}}\cdot\partial_{\alpha}z^{\bot})(\gamma)}{A^{2}(t)}\de{(\gamma)}\cdot\partial_{\alpha}z(\gamma)H(\varpi)(\gamma)\frac{\partial^{2}_{\alpha}z(\gamma)\partial_{\alpha}z(\gamma)}{\abs{\partial_{\alpha}z(\gamma)}^{2}}d\alpha\\
&\le C\norm{\F}^{2}_{L^{\infty}(S)}\norm{\de}^{2}_{\s}\norm{z}_{\mathcal{C}^{1}(S)}
\norm{z}_{\mathcal{C}^{2}(S)}\norm{H\varpi}_{L^{\infty}(S)}\\
&\le\est{4}
\end{align*}

and 
\begin{align*}
W_{2}&\le C\norm{\F}^{2}_{L^{\infty}(S)}\norm{\de}^{2}_{\s}\norm{z}_{\mathcal{C}^{1}(S)}
\norm{z}_{\mathcal{C}^{2}(S)}\norm{\varpi}_{\mathcal{C}^{1}(S)}\\
&-2\re\int_{\T}\frac{H(\overline{\de{}}\cdot\partial_{\alpha}z^{\bot})(\gamma)}{A^{2}(t)}H(\de)(\gamma)\cdot\partial_{\alpha}z(\gamma)\varpi(\gamma)\frac{\partial^{2}_{\alpha}z(\gamma)\partial_{\alpha}z(\gamma)}{\abs{\partial_{\alpha}z(\gamma)}^{2}}d\alpha\\
&\le\est{4}
\end{align*}

Hence,
$$T_{1}\le\est{4}.$$

We decompose $T_{2}=U_{4}+U_{5}$,
\begin{align*}
&U_{4}=\re\int_{\T}\int_{\R}\frac{H(\overline{\de{}}\cdot\partial_{\alpha}z^{\bot})(\gamma)}{A^{2}(t)}\Delta\de\cdot\partial_{\alpha}z(\gamma)\frac{\varpi(\gamma-\beta)-\varpi(\gamma)}{\beta^{2}}d\beta d\alpha,\\
&U_{5}=\re\int_{\T}\int_{\R}\frac{H(\overline{\de{}}\cdot\partial_{\alpha}z^{\bot})(\gamma)}{A^{2}(t)}\Delta\de\cdot\partial_{\alpha}z(\gamma)\frac{\varpi(\gamma)}{\beta^{2}}d\beta d\alpha.
\end{align*}

Then we split
\begin{align*}
U_{4}&=\re\int_{\T}\int_{\R}\frac{H(\overline{\de{}}\cdot\partial_{\alpha}z^{\bot})(\gamma)}{A^{2}(t)}\de(\gamma)\cdot\partial_{\alpha}z(\gamma)\frac{\varpi(\gamma-\beta)-\varpi(\gamma)}{\beta^{2}}d\beta d\alpha\\
&-\re\int_{\T}\int_{\R}\frac{H(\overline{\de{}}\cdot\partial_{\alpha}z^{\bot})(\gamma)}{A^{2}(t)}\de(\gamma-\beta)\cdot\partial_{\alpha}z(\gamma)\frac{\varpi(\gamma-\beta)-\varpi(\gamma)}{\beta^{2}}d\beta d\alpha\\
&\equiv V_{3}+V_{4}
\end{align*}

where
\begin{displaymath}
V_{3}\le\est{4}
\end{displaymath}
and, 
\begin{align*}
V_{4}&\le C\norm{\F}^{\frac{3}{2}}_{L^{\infty}(S)}\norm{\de}^{2}_{\s}\norm{z}_{\mathcal{C}^{1}(S)}\norm{\varpi}_{\mathcal{C}^{2}(S)}\\
&-\pi\re\int_{\T}\frac{H(\overline{\de{}}\cdot\partial_{\alpha}z^{\bot})(\gamma)}{A^{2}(t)}H(\de)(\gamma)\cdot\partial_{\alpha}z(\gamma)\partial_{\alpha}\varpi(\gamma)d\alpha\\
&\le\est{4}.
\end{align*}
Then,
$$U_{4}\le\est{4}.$$

For $U_{5}$ integrating by parts for $\Lambda$ we have,
\begin{align*}
U_{5}&=\re\int_{\T}\Lambda(\frac{H(\overline{\de{}}\cdot\partial_{\alpha}z^{\bot})}{A^{2}(t)}\partial_{\alpha}z\varpi)\cdot\de(\gamma)d\alpha\\
&=\re\int_{\T}(\Lambda(\frac{H(\overline{\de{}}\cdot\partial_{\alpha}z^{\bot})}{A^{2}(t)}\partial_{\alpha}z\varpi)(\gamma)-\partial_{\alpha}z(\gamma)\Lambda(\frac{H(\overline{\de{}}\cdot\partial_{\alpha}z^{\bot})}{A^{2}(t)})(\gamma))\cdot\de(\gamma)d\alpha\\
&+\re\int_{\T}\partial_{\alpha}z(\gamma)\Lambda(\frac{H(\overline{\de{}}\cdot\partial_{\alpha}z^{\bot})}{A^{2}(t)})(\gamma)\cdot\de(\gamma)d\alpha\\
&\le\est{4}-\re\int_{\T}\partial_{\alpha}z(\gamma)\frac{\partial_{\alpha}(\overline{\de{}}\cdot\partial_{\alpha}z^{\bot})(\gamma)}{A^{2}(t)}\cdot\de(\gamma)d\alpha\\
\end{align*}
now, using $\partial_{\alpha}z(\alpha)\cdot\partial^{4}_{\alpha}z(\alpha)=-3\partial^{2}_{\alpha}z(\alpha)\cdot\partial_{\alpha}^{3}z(\alpha)$
\begin{align*}
&-\re\int_{\T}\frac{\partial_{\alpha}(\overline{\de{}}\cdot\partial_{\alpha}z^{\bot})(\gamma)}{A^{2}(t)}\partial_{\alpha}z(\gamma)\cdot\de(\gamma)d\alpha=3\re\int_{\T}\frac{\partial_{\alpha}(\overline{\de{}}\cdot\partial_{\alpha}z^{\bot})(\gamma)}{A^{2}(t)}\partial^{2}_{\alpha}z(\gamma)\cdot\partial_{\alpha}^{3}z(\gamma)d\alpha\\
&=-3\re\int_{\T}\frac{\overline{\de{}}\cdot\partial_{\alpha}z^{\bot}(\gamma)}{A^{2}(t)}\partial^{3}_{\alpha}z(\gamma)\cdot\partial_{\alpha}^{3}z(\gamma)d\alpha-3\re\int_{\T}\frac{\overline{\de{}}\cdot\partial_{\alpha}z^{\bot}(\gamma)}{A^{2}(t)}\partial^{2}_{\alpha}z(\gamma)\cdot\partial_{\alpha}^{4}z(\gamma)d\alpha\\
&\le\est{4}.
\end{align*}

Therefore,
$$T_{2}\le\est{4}.$$
Thus, $S_{4}$ satisfies identical estimates than $T_{2}$.

To conclude with $R_{7}$, let us estimate $S_{5}$.

We split $S_{5}=T_{3}+T_{4}$

\begin{align*}
&T_{3}=-2\re\int_{\T}\int_{\R}\frac{H(\overline{\de{}}\cdot\partial_{\alpha}z^{\bot})(\gamma)}{A(t)}C(\gamma,\beta)\cdot\partial_{\alpha}z(\gamma)(\Delta z\cdot\Delta\de)\varpi(\gamma-\beta)d\alpha d\beta,\\
&T_{4}=-2\re\int_{\T}\int_{\R}\frac{H(\overline{\de{}}\cdot\partial_{\alpha}z^{\bot})(\gamma)}{A(t)}\frac{\partial^{\bot}_{\alpha}z(\gamma)}{A^{2}(t)\beta^{3}}\cdot\partial_{\alpha}z(\gamma)(\Delta z\cdot\Delta\de)\varpi(\gamma-\beta)d\alpha d\beta.
\end{align*}

Since $\partial^{\bot}_{\alpha}z(\gamma)\cdot\partial_{\alpha}z(\gamma)=0$, for (\ref{c(gamma)}) we have $C(\gamma,\beta)\cdot\partial_{\alpha}z(\gamma)=C_1(\gamma,\beta)\cdot\partial_{\alpha}z(\gamma)$ and $T_{4}=0$.

Recall that,
\begin{align*}
&C_1(\gamma,\beta)=\frac{\beta^{2}\int _
{0}^{1}\int _
{0}^{1}\partial^{2}_{\alpha}z^{\bot}(\eta)(s-1)dsdt}{\abs{\Delta z}^{4}}
\end{align*}
with $\eta=\gamma-t\beta+st\beta.$

Using 
\begin{displaymath}
\Delta\partial^{k}_{\alpha}z=\beta\int _
{0}^{1}\partial^{k+1}_{\alpha}z(\phi)ds
\end{displaymath}
and 
\begin{align*}
&C_1(\gamma,\beta)\cdot\partial_{\alpha}z(\gamma)-\frac{\beta^{2}\partial^{2}_{\alpha}z^{\bot}(\gamma)\cdot\partial_{\alpha}z(\gamma)}{\abs{\Delta z}^{4}}=\frac{\beta^{2}\int _
{0}^{1}\int _
{0}^{1}[\partial^{2}_{\alpha}z^{\bot}(\eta)-\partial^{2}_{\alpha}z(\gamma)](s-1)dtds\cdot\partial_{\alpha}z(\gamma)}{\abs{\Delta z}^{4}}
\end{align*}
we get
\begin{align*}
S_{5}&\le\est{4}\\
&-2\re\int_{\T}\int_{\R}\frac{H(\overline{\de{}}\cdot\partial_{\alpha}z^{\bot})(\gamma)}{A^{3}(t)}\partial^{2}_{\alpha}z^{\bot}(\gamma)\cdot\partial_{\alpha}z(\gamma)\partial_{\alpha}z(\gamma)\cdot\Delta\de\frac{\varpi(\gamma-\beta)}{\beta}d\alpha d\beta\\
&\le\est{4}\\
&-2\pi\re\int_{\T}\frac{H(\overline{\de{}}\cdot\partial_{\alpha}z^{\bot})(\gamma)}{A^{3}(t)}\partial^{2}_{\alpha}z^{\bot}(\gamma)\cdot\partial_{\alpha}z(\gamma)\partial_{\alpha}z(\gamma)\cdot\de(\gamma)H(\varpi)(\gamma)d\alpha\\
&-4\pi\re\int_{\T}\frac{H(\overline{\de{}}\cdot\partial_{\alpha}z^{\bot})(\gamma)}{A^{3}(t)}\partial^{2}_{\alpha}z^{\bot}(\gamma)\cdot\partial_{\alpha}z(\gamma)\partial_{\alpha}z(\gamma)\cdot H(\de)(\gamma)\varpi(\gamma)d\alpha.\\
\end{align*}

Therefore we can control $S_{5}$.  

Let us decompose
\begin{align*}
Q_{9}=&3\re\int_{\T}\frac{H(\overline{\de{}}\cdot\partial_{\alpha}z^{\bot})(\gamma)}{A(t)}\partial^{2}_{\alpha}BR(z,\varpi)(\gamma)\cdot\partial^{3}_{\alpha}z(\gamma)d\alpha\\
&+3\re\int_{\T}\frac{H(\overline{\de{}}\cdot\partial_{\alpha}z^{\bot})(\gamma)}{A(t)}\partial_{\alpha}BR(z,\varpi)(\gamma)\cdot\partial^{4}_{\alpha}z(\gamma)d\alpha\equiv R_{9}+R_{10},
\end{align*}
using (\ref{bir}) 
\begin{align*}
&R_{9}\le C\norm{\F}_{L^{\infty}(S)}\norm{z}_{\mathcal{C}^{3}(S)}\norm{BR}_{H^{2}(S)}\norm{z}_{\mathcal{C}^{1}(S)}\norm{\de}_{\s}\le\est{4},\\
&R_{10}\le C\norm{\F}_{L^{\infty}(S)}\norm{\partial_{\alpha}BR}_{L^{\infty}(S)}\norm{z}_{\mathcal{C}^{1}(S)}\norm{\de}^{2}_{\s}\le\est{4}.
\end{align*}

Then $Q_{9}\le\est{4}$.

Finally we have to find the rest of $\sigma(\gamma)$ in $Q_{10}$. To do that let us split $Q_{10}=R_{11}+R_{12}+R_{13}+R_{14}$ where
\begin{align*}
&R_{11}=\re\int_{\T}\frac{\Lambda(\overline{\de_{1}}\partial_{\alpha}\zz)(\gamma)}{A(t)}BR_{1}(z,\varpi)(\gamma)\de_{1}(\gamma)d\alpha,\\
&R_{12}=\re\int_{\T}\frac{\Lambda(\overline{\de_{1}}\partial_{\alpha}\zz)(\gamma)}{A(t)}BR_{2}(z,\varpi)(\gamma)\de_{2}(\gamma)d\alpha,\\
&R_{13}=-\re\int_{\T}\frac{\Lambda(\overline{\de_{2}}\partial_{\alpha}\z)(\gamma)}{A(t)}BR_{1}(z,\varpi)(\gamma)\de_{1}(\gamma)d\alpha,\\
&R_{14}=-\re\int_{\T}\frac{\Lambda(\overline{\de_{2}}\partial_{\alpha}\z)(\gamma)}{A(t)}BR_{2}(z,\varpi)(\gamma)\de_{2}(\gamma)d\alpha.\\
\end{align*}

Then
\begin{align*}
R_{11}&=\re\int_{\T}\frac{\Lambda(\overline{\de_{1}}\partial_{\alpha}\zz)(\gamma)-\partial_{\alpha}\zz(\gamma)\Lambda(\overline{\de_{1}})(\gamma)}{A(t)}BR_{1}(z,\varpi)(\gamma)\de_{1}(\gamma)d\alpha\\
&+\re\int_{\T}\frac{\partial_{\alpha}\zz(\gamma)\Lambda(\overline{\de_{1}})(\gamma)}{A(t)}BR_{1}(z,\varpi)(\gamma)\de_{1}(\gamma)d\alpha,
\end{align*}

and the commutator estimates yields
\begin{displaymath}
R_{11}\le\est{4}+\re\int_{\T}\frac{\partial_{\alpha}\zz(\gamma)BR_{1}(z,\varpi)(\gamma)}{A(t)}\de_{1}(\gamma)\Lambda(\overline{\de_{1}})(\gamma)d\alpha.
\end{displaymath}

In a similar way we have
\begin{align*}
&R_{12}\le\est{4}+\re\int_{\T}\frac{\partial_{\alpha}\zz(\gamma)BR_{2}(z,\varpi)(\gamma)}{A(t)}\de_{2}(\gamma)\Lambda(\overline{\de_{1}})(\gamma)d\alpha,\\
&R_{13}\le\est{4}-\re\int_{\T}\frac{\partial_{\alpha}\z(\gamma)BR_{1}(z,\varpi)(\gamma)}{A(t)}\de_{1}(\gamma)\Lambda(\overline{\de_{2}})(\gamma)d\alpha,\\
&R_{14}\le\est{4}-\re\int_{\T}\frac{\partial_{\alpha}\z(\gamma)BR_{2}(z,\varpi)(\gamma)}{A(t)}\de_{2}(\gamma)\Lambda(\overline{\de_{2}})(\gamma)d\alpha.
\end{align*}

Since, $$\partial_{\alpha}\zz\partial_{\alpha}^{4}\zz=\partial_{\alpha}z\cdot\de{}-\partial_{\alpha}\z\de_{1}=-3\partial^{2}_{\alpha}z\cdot\partial^{3}_{\alpha}z-\partial_{\alpha}\z\de_{1}$$
and $H\partial_{\alpha}=\Lambda$, using integration by parts
\begin{align*}
&R_{12}\le\est{4}-3\re\int_{\T}\partial_{\alpha}(\frac{BR_{2}(z,\varpi)(\gamma)}{A(t)}\partial^{2}_{\alpha}z\cdot\partial^{3}_{\alpha}z)H(\overline{\de_{1}})(\gamma)d\alpha\\
&-\re\int_{\T}\frac{\partial_{\alpha}\z(\gamma)BR_{2}(z,\varpi)(\gamma)}{A(t)}\de_{1}(\gamma)\Lambda(\overline{\de_{1}})(\gamma)d\alpha\\
&\le\est{4}-\re\int_{\T}\frac{\partial_{\alpha}\z(\gamma)BR_{2}(z,\varpi)(\gamma)}{A(t)}\de_{1}(\gamma)\Lambda(\overline{\de_{1}})(\gamma)d\alpha.\\
\end{align*}
And in the same way,
\begin{align*}
&R_{13}\le\est{4}+\re\int_{\T}\frac{\partial_{\alpha}\zz(\gamma)BR_{1}(z,\varpi)(\gamma)}{A(t)}\de_{2}(\gamma)\Lambda(\overline{\de_{2}})(\gamma)d\alpha.
\end{align*}

Therefore,
\begin{align*}
&R_{11}+R_{12}+R_{13}+R_{14}\le\est{4}\\
&-\re\int_{\T}\frac{BR(z,\varpi)(\gamma)\cdot\partial^{\bot}_{\alpha}z(\gamma)}{A(t)}\de(\gamma)\cdot\Lambda(\overline{\de{}})(\gamma)d\alpha.
\end{align*}
Then,
\begin{align*}
&P_{7}+P_{8}\le\est{4}\\
&-\re\int_{\T}\frac{BR(z,\varpi)(\gamma)\cdot\partial_{\alpha}z^{\bot}(\gamma)}{A(t)}\de(\gamma)\cdot\Lambda(\overline{\de{}})(\gamma)d\alpha-g\kappa\frac{\rho^{2}}{\mu^{2}}\re\int_{\T}\frac{\partial_{\alpha}\z(\gamma)}{A(t)}\de{(\gamma)}\cdot\Lambda(\overline{\de{}})(\gamma)d\alpha.
\end{align*}
Let us look at these last two terms,
\begin{align*}
&-\re\int_{\T}\frac{BR(z,\varpi)(\gamma)\cdot\partial_{\alpha}z^{\bot}(\gamma)}{A(t)}\de(\gamma)\cdot\Lambda(\overline{\de{}})(\gamma)d\alpha-\kappa\frac{\rho^{2}}{\mu^{2}}\re\int_{\T}\frac{\partial_{\alpha}\z(\gamma)}{A(t)}\de{(\gamma)}\cdot\Lambda(\overline{\de{}})(\gamma)d\alpha\\
&=-\re\int_{\T}\frac{\sigma(\gamma,t)}{A(t)}\de{(\gamma)}\cdot\Lambda(\overline{\de{}})(\gamma)d\alpha\\
&=\int_{\T}\im(\frac{\sigma}{A(t)})(-\re(\de)\cdot\im(\Lambda(\de))+\im(\de)\cdot\re(\Lambda(\de)))d\alpha\\
&-\int_{\T}\re(\frac{\sigma}{A(t)})(\re(\de)\cdot\re(\Lambda(\de))+\im(\de)\cdot\im(\Lambda(\de)))d\alpha\equiv Y_{1}+Y_{2},
\end{align*}
we get
\begin{align*}
Y_{1}&=\int_{\T}(-\Lambda(\im(\frac{\sigma}{A(t)})\re(\de))+\im(\frac{\sigma}{A(t)})\re(\Lambda(\de)))\cdot\im(\de)d\alpha\\
&\le C\norm{\frac{\sigma}{A(t)}}_{\mathcal{C}^{1,\delta}(S)}\norm{\de}^{2}_{\s}\le\est{4}
\end{align*}
and
\begin{align*}
Y_{2}&=-\int_{\T}(\re(\frac{\sigma}{A(t)})-m(t))(\re(\de)\cdot\re(\Lambda(\de))+\im(\de)\cdot\im(\Lambda(\de)))d\alpha\\
&-\int_{\T} m(t)(\re(\de)\cdot\re(\Lambda(\de))+\im(\de)\cdot\im(\Lambda(\de)))d\alpha\\
&\equiv Y_{3}+Y_{4}
\end{align*}
where $$m(t)=\min_{\gamma}\sigma(\gamma,t).$$

Since $\re(\frac{\sigma}{A(t)})-m(t)>0$ using $2g\Lambda(g)-\Lambda(g^{2})\ge 0$, see \cite{point}
\begin{align*}
&Y_{3}\le\frac{1}{2}\norm{\Lambda(\re(\frac{\sigma}{A(t)}))}_{L^{\infty}(S)}\norm{\de}^{2}_{\s}\le C\norm{\frac{\sigma}{A(t)}}_{\mathcal{C}^{1,\delta}(S)}\norm{\de}^{2}_{\s}\\
&\le\est{4},\\
&Y_{4}=-m(t)\norm{\Lambda^{\frac{1}{2}}\de}^{2}_{\s}
\end{align*}
Combining all previous estimates
\begin{align*}
I_{7}\le\est{4}-m(t)\norm{\Lambda^{\frac{1}{2}}\de}^{2}_{\s}.
\end{align*}

\subsubsection{Estimates on $\partial^{4}_{\alpha}(c(\gamma,t)\cdot\partial_{\alpha}z(\gamma,t))$ for $J_{2}$}
\label{cest}

In the evolution of the norm of $\de(\gamma)$ it remains to control the term 
\begin{align*}
J_{2}&=\re\int_{\T}\overline{\de(\gamma)}\cdot\partial^{4}_{\alpha}c(\gamma)\partial_{\alpha}z(\gamma)d\alpha+4\re\int_{\T}\overline{\de(\gamma)}\cdot\partial^{3}_{\alpha}c(\gamma)\partial^{2}_{\alpha}z(\gamma)d\alpha\\
&+6\re\int_{\T}\overline{\de(\gamma)}\cdot\partial^{2}_{\alpha}c(\gamma)\partial^{3}_{\alpha}z(\gamma)d\alpha+4\re\int_{\T}\overline{\de(\gamma)}\cdot\partial_{\alpha}c(\gamma)\partial^{4}_{\alpha}z(\gamma)d\alpha\\
&+\re\int_{\T}\overline{\de(\gamma)}\cdot c(\gamma)\partial^{5}_{\alpha}z(\gamma)d\alpha\equiv Q_{1}+Q_{2}+Q_{3}+Q_{4}+Q_{5}.
\end{align*}

Let us recall the formula
\begin{align*}
c(\alpha,t)=\frac{\alpha+\pi}{2\pi}\int_{\T}{\frac{\partial_{\beta}z(\beta,t)}{\abs{\partial_{\beta}z(\beta,t)}^{2}}\cdot\partial_{\beta}BR(z,\varpi)(\beta,t)d\beta}\\-\int_{-\pi}^{\alpha}\frac{\partial_{\beta}z(\beta,t)}{\abs{\partial_{\beta}z(\beta,t)}^{2}}\cdot\partial_{\beta}BR(z,\varpi)(\beta,t)d\beta,
\end{align*}
then
\begin{align*}
Q_{2}&=4\re\int_{\T}\frac{\overline{\de(\gamma)}}{A(t)}\cdot\partial^{2}_{\alpha}z(\gamma)\partial^{3}_{\alpha}z(\gamma)\cdot\partial_{\alpha}BR(z,\varpi)(\gamma)d\alpha\\
&+8\re\int_{\T}\frac{\overline{\de(\gamma)}}{A(t)}\cdot\partial^{2}_{\alpha}z(\gamma)\partial^{2}_{\alpha}z(\gamma)\cdot\partial^{2}_{\alpha}BR(z,\varpi)(\gamma)d\alpha\\
&+4\re\int_{\T}\frac{\overline{\de(\gamma)}}{A(t)}\cdot\partial^{2}_{\alpha}z(\gamma)\partial_{\alpha}z(\gamma)\cdot\partial^{3}_{\alpha}BR(z,\varpi)(\gamma)d\alpha\equiv N_{1}+N_{2}+N_{3}
\end{align*}
and
\begin{align*}
&N_{1}\le C\norm{\F}_{L^{\infty}(S)}\norm{z}_{\mathcal{C}^{2}(S)}\norm{z}_{\mathcal{C}^{3}(S)}\norm{BR(z,\varpi)}_{H^{1}(S)}\norm{\de}_{\s},\\
&N_{2}\le C\norm{\F}_{L^{\infty}(S)}\norm{z}^{2}_{\mathcal{C}^{2}(S)}\norm{\partial^{2}_{\alpha}BR(z,\varpi)}_{\s}\norm{\de}_{\s},\\
&N_{3}\le C\norm{\F}_{L^{\infty}(S)}\norm{z}_{\mathcal{C}^{2}(S)}\norm{z}_{\mathcal{C}^{1}(S)}\norm{\partial^{3}_{\alpha}BR(z,\varpi)}_{\s}\norm{\de}_{\s}.
\end{align*}
Thus $$Q_{2}\le\est{4}.$$

In the same way,
\begin{align*}
Q_{3}&=-6\re\int_{\T}\overline{\de(\gamma)}\cdot\partial^{3}_{\alpha}z(\gamma)\frac{\partial^{2}_{\alpha}z(\gamma)}{\abs{\partial_{\alpha}z(\gamma)}^{2}}\cdot\partial_{\alpha}BR(z,\varpi)(\gamma)d\alpha\\
&-6\re\int_{\T}\overline{\de(\gamma)}\cdot\partial^{3}_{\alpha}z(\gamma)\frac{\partial_{\alpha}z(\gamma)}{\abs{\partial_{\alpha}z(\gamma)}^{2}}\cdot\partial^{2}_{\alpha}BR(z,\varpi)(\gamma)d\alpha\equiv N_{4}+N_{5}
\end{align*}
where
\begin{align*}
&N_{4}\le C\norm{\F}_{L^{\infty}(S)}\norm{z}_{\mathcal{C}^{2}(S)}\norm{z}_{\mathcal{C}^{3}(S)}\norm{\de}_{\s}\norm{\partial_{\alpha}BR(z,\varpi)}_{\s},\\
&N_{5}\le C\norm{\F}_{L^{\infty}(S)}\norm{z}_{\mathcal{C}^{3}(S)}\norm{z}_{\mathcal{C}^{1}(S)}\norm{\de}_{\s}\norm{\partial^{2}_{\alpha}BR(z,\varpi)}_{\s},
\end{align*}
thus $$Q_{3}\le\est{4}.$$

The term $Q_{4}$ satisfies
\begin{align*}
Q_{4}&\le C\norm{\partial_{\alpha}c}_{L^{\infty}(S)}\norm{\de}^{2}_{\s}\le C\norm{\F}^{\frac{1}{2}}_{L^{\infty}(S)}\norm{\partial_{\alpha}BR}_{L^{\infty}(S)}\norm{\de}^{2}_{\s}\\
&\le\est{4}
\end{align*}

and for $Q_{5}$
\begin{align*}
Q_{5}&=\re\int_{\T} c(\gamma)\overline{\de(\gamma)}\cdot \dee(\gamma)d\alpha\\
&=\int_{\T}\re(c)(\re(\de)\re(\dee)+\im(\de)\im(\dee))d\alpha\\
&+\int_{\T}\im(c)(-\re(\de)\im(\dee)+\im(\de)\re(\dee))d\alpha\\
&\equiv Q_{5}^{1}+Q_{5}^{2}
\end{align*}
where,
\begin{align*}
&Q_{5}^{1}=-\frac{1}{2}\int_{\T}\re(\partial_{\alpha}c)\abs{\de}^{2}d\alpha\le\norm{\partial_{\alpha}z}_{L^{\infty}}\norm{\de}^{2}_{\s}\\
&\le C\norm{\F}^{\frac{1}{2}}_{L^{\infty}(S)}\norm{\partial_{\alpha}BR(z,\varpi)}_{L^{\infty}(S)}\norm{\de}^{2}_{\s}\le\est{4}
\end{align*}
and
\begin{align*}
Q_{5}^{2}&=\int_{\T}\im(\partial_{\alpha}c)\re(\de)\im(\de)d\alpha+2\int_{\T}\im(c)\im(\de)\re(\dee)d\alpha\\
&\le\norm{\partial_{\alpha}c}_{L^{\infty}(S)}\norm{\de}^{2}_{\s}-2\int_{\T}\im(c)\im(\de)\re(\Lambda(H(\de)))d\alpha\\
&\le\est{4}-2\int_{\T}\Lambda^{\frac{1}{2}}(\im(c)\im(\de))\re(\Lambda^{\frac{1}{2}}(H(\de)))d\alpha\\
&\le\est{4}+K\norm{\im(c)}_{H^{2}(S)}\norm{\Lambda^{\frac{1}{2}}\de}^{2}_{\s}.
\end{align*}
Finally,
\begin{align*}
Q_{1}&=\re\int_{\T}\frac{\overline{\de(\gamma)}}{A(t)}\cdot\partial_{\alpha}z(\gamma)\de(\gamma)\cdot\partial_{\alpha}BR(z,\varpi)(\gamma)d\alpha\\
&-3\re\int_{\T}\frac{\overline{\de(\gamma)}}{A(t)}\cdot\partial_{\alpha}z(\gamma)\partial^{3}_{\alpha}z(\gamma)\cdot\partial^{2}_{\alpha}BR(z,\varpi)(\gamma)d\alpha\\
&-3\re\int_{\T}\frac{\overline{\de(\gamma)}}{A(t)}\cdot\partial_{\alpha}z(\gamma)\partial^{2}_{\alpha}z(\gamma)\cdot\partial^{3}_{\alpha}BR(z,\varpi)(\gamma)d\alpha\\
&-\re\int_{\T}\frac{\overline{\de(\gamma)}}{A(t)}\cdot\partial_{\alpha}z(\gamma)\partial_{\alpha}z(\gamma)\cdot\partial^{4}_{\alpha}BR(z,\varpi)(\gamma)d\alpha\\
&\equiv N_{6}+N_{7}+N_{8}+N_{9}
\end{align*}
where
\begin{align*}
&N_{6}\le C\norm{\F}_{L^{\infty}(S)}\norm{z}_{\mathcal{C}^{1}(S)}\norm{\partial_{\alpha}BR}_{L^{\infty}(S)}\norm{\de}^{2}_{\s},\\
&N_{7}\le C\norm{\F}_{L^{\infty}(S)}\norm{z}_{\mathcal{C}^{1}(S)}\norm{z}_{\mathcal{C}^{3}(S)}\norm{\partial^{2}_{\alpha}BR}_{\s}\norm{\de}_{\s},\\
&N_{8}\le C\norm{\F}_{L^{\infty}(S)}\norm{z}_{\mathcal{C}^{1}(S)}\norm{z}_{\mathcal{C}^{2}(S)}\norm{\partial^{3}_{\alpha}BR}_{\s}\norm{\de}_{\s}.
\end{align*}

To estimate $N_{9}$, we must proceed in the same way we did with $J_{1}$. We split $N_{9}=I'_{3}+I'_{4}+I'_{5}+I'_{6}+I'_{7}$
\begin{align*}
&I'_{3}=-\re\int_{\T}\int_{\R}\frac{\overline{\de(\gamma)}}{A(t)}\cdot\partial_{\alpha}z(\gamma)\partial_{\alpha}z(\gamma)\cdot\partial^{4}_{\alpha}(\frac{(z(\gamma)-z(\gamma-\beta))^{\bot}}{\abs{z(\gamma)-z(\gamma-\beta)}^{2}})\varpi(\gamma-\beta)d\alpha d\beta,\\
&I'_{4}=-4\re\int_{\T}\int_{\R}\frac{\overline{\de(\gamma)}}{A(t)}\cdot\partial_{\alpha}z(\gamma)\partial_{\alpha}z(\gamma)\cdot\partial^{3}_{\alpha}(\frac{(z(\gamma)-z(\gamma-\beta))^{\bot}}{\abs{z(\gamma)-z(\gamma-\beta)}^{2}})\partial_{\alpha}\varpi(\gamma-\beta)d\alpha d\beta,\\
&I'_{5}=-6\re\int_{\T}\int_{\R}\frac{\overline{\de(\gamma)}}{A(t)}\cdot\partial_{\alpha}z(\gamma)\partial_{\alpha}z(\gamma)\cdot\partial^{2}_{\alpha}(\frac{(z(\gamma)-z(\gamma-\beta))^{\bot}}{\abs{z(\gamma)-z(\gamma-\beta)}^{2}})\partial^{2}_{\alpha}\varpi(\gamma-\beta)d\alpha d\beta,\\
&I'_{6}=-4\re\int_{\T}\int_{\R}\frac{\overline{\de(\gamma)}}{A(t)}\cdot\partial_{\alpha}z(\gamma)\partial_{\alpha}z(\gamma)\cdot\partial_{\alpha}(\frac{(z(\gamma)-z(\gamma-\beta))^{\bot}}{\abs{z(\gamma)-z(\gamma-\beta)}^{2}})\partial^{3}_{\alpha}\varpi(\gamma-\beta)d\alpha d\beta,\\
&I'_{7}=-\re\int_{\T}\int_{\R}\frac{\overline{\de(\gamma)}}{A(t)}\cdot\partial_{\alpha}z(\gamma)\partial_{\alpha}z(\gamma)\cdot(\frac{(z(\gamma)-z(\gamma-\beta))^{\bot}}{\abs{z(\gamma)-z(\gamma-\beta)}^{2}})\partial^{4}_{\alpha}\varpi(\gamma-\beta)d\alpha d\beta.\\
\end{align*}

To study this terms we have to repeat all estimates as in section \ref{estbr}. We select only the terms with different decompositions and we leave to the reader the remainder easy cases. 

If we consider the term corresponding to $Q_{4}$ in section \ref{estbr} we have since
\begin{align*}
&\re(\overline{\de{}}\cdot\partial_{\alpha}z)=\re(\de)\cdot\re(\partial_{\alpha}z)+\im(\de)\cdot\im(\partial_{\alpha}z),\\
&\im(\overline{\de{}}\cdot\partial_{\alpha}z)=-\re(\de)\cdot\im(\partial_{\alpha}z)+\im(\de)\cdot\re(\partial_{\alpha}z),\\
&\re(\de\cdot\partial_{\alpha}z)=\re(\de)\cdot\re(\partial_{\alpha}z)-\im(\de)\cdot\im(\partial_{\alpha}z),\\
&\im(\de\cdot\partial_{\alpha}z)=\im(\de)\cdot\re(\partial_{\alpha}z)+\re(\de)\cdot\im(\partial_{\alpha}z),\\
\end{align*}
and
$$\partial_{\alpha}z\cdot\de=-3\partial^{2}_{\alpha}z\cdot\partial_{\alpha}^{3}z.$$
we can write
\begin{align*}
&\re(\overline{\de{}}\cdot\partial_{\alpha}z)=\re(-3\partial^{2}_{\alpha}z\cdot\partial^{3}_{\alpha}z)+2\im(\de)\cdot\im(\partial_{\alpha}z),\\
&\im(\overline{\de{}}\cdot\partial_{\alpha}z)=\im(-3\partial^{2}_{\alpha}z\cdot\partial^{3}_{\alpha}z)-2\re(\de)\cdot\im(\partial_{\alpha}z).\\
\end{align*}
Thus
\begin{align*}
Q'_{4}&=-2\pi\re\int_{\T}\frac{\overline{\de(\gamma)}}{A^{2}(t)}\cdot\partial_{\alpha}z(\gamma)\partial_{\alpha}z(\gamma)\cdot\Lambda(\de^{\bot})(\gamma)\varpi(\gamma)d\alpha\\
&=-2\pi\int_{\T}\re(\overline{\de{}}\cdot\partial_{\alpha}z)\re(\partial_{\alpha}z\cdot\Lambda(\de^{\bot})\frac{\varpi}{A^{2}(t)})-\im(\overline{\de{}}\cdot\partial_{\alpha}z)\im(\partial_{\alpha}z\cdot\Lambda(\de^{\bot})\frac{\varpi}{A^{2}(t)})d\alpha\\
&=-2\pi\int_{\T}(\re(-3\partial^{2}_{\alpha}z\cdot\partial^{3}_{\alpha}z)+2\im(\de)\cdot\im(\partial_{\alpha}z))\re(\partial_{\alpha}z\cdot\Lambda(\de^{\bot})\frac{\varpi}{A^{2}(t)})d\alpha\\
&+2\pi\int_{\T}(\im(-3\partial^{2}_{\alpha}z\cdot\partial^{3}_{\alpha}z)-2\re(\de)\cdot\im(\partial_{\alpha}z))\im(\partial_{\alpha}z\cdot\Lambda(\de^{\bot})\frac{\varpi}{A^{2}(t)})d\alpha\\
&\equiv Q^{'1}_{4}+Q^{'2}_{4}
\end{align*}
we have,
\begin{align*}
Q^{'1}_{4}&=-2\pi\int_{\T}\re(-3\partial^{2}_{\alpha}z\cdot\partial^{3}_{\alpha}z)\re(\partial_{\alpha}z\cdot\Lambda(\de^{\bot})\frac{\varpi}{A^{2}(t)})d\alpha\\
&-4\pi\int_{\T}\im(\de)\cdot\im(\partial_{\alpha}z)\re(\partial_{\alpha}z\cdot\Lambda(\de^{\bot})\frac{\varpi}{A^{2}(t)})d\alpha\\
&\equiv Q^{'11}_{4}+Q^{'12}_{4}.
\end{align*}
Clearly,
\begin{align*}
Q^{'11}_{4}\le\est{4}.
\end{align*}
Since
\begin{align*}
&\re(\partial_{\alpha}z\cdot\Lambda(\de^{\bot})\frac{\varpi}{A^{2}(t)})=\re(\partial_{\alpha}z\frac{\varpi}{A^{2}(t)})\cdot\re(\Lambda(\de^{\bot}))-\im(\partial_{\alpha}z\frac{\varpi}{A^{2}(t)})\cdot\im(\Lambda(\de^{\bot}))
\end{align*}
we take,
\begin{align*}
Q^{'12}_{4}&=-4\pi\int_{\T}\im(\de)\cdot\im(\partial_{\alpha}z)\re(\partial_{\alpha}z\frac{\varpi}{A^{2}(t)})\cdot\re(\Lambda(\de^{\bot}))d\alpha\\
&+4\pi\int_{\T}\im(\de)\cdot\im(\partial_{\alpha}z)\im(\partial_{\alpha}z\frac{\varpi}{A^{2}(t)})\cdot\im(\Lambda(\de^{\bot}))d\alpha\\
&\le\est{4}+k\norm{\im(\partial_{\alpha}z)\re(\partial_{\alpha}z\frac{\varpi}{A^{2}(t)})}_{H^{2}(S)}\norm{\Lambda^{\frac{1}{2}}\de}^{2}_{\s}\\
&+\est{4}+c\norm{\im(\partial_{\alpha}z)\im(\partial_{\alpha}z\frac{\varpi}{A^{2}(t)})}_{H^{2}(S)}\norm{\Lambda^{\frac{1}{2}}\de}^{2}_{\s}.
\end{align*}

For $Q^{'2}_{4}$
\begin{align*}
Q^{'2}_{4}&=2\pi\int_{\T}\im(-3\partial^{2}_{\alpha}z\cdot\partial^{3}_{\alpha}z)\im(\partial_{\alpha}z\cdot\Lambda(\de^{\bot})\frac{\varpi}{A^{2}(t)})d\alpha\\
&-4\pi\int_{\T}\re(\de)\cdot\im(\partial_{\alpha}z)\im(\partial_{\alpha}z\cdot\Lambda(\de^{\bot})\frac{\varpi}{A^{2}(t)})d\alpha\\
&\equiv Q^{'21}_{4}+Q^{'22}_{4}
\end{align*}

Clearly,
\begin{align*}
Q^{'21}_{4}\le\est{4}.
\end{align*}
Since,
\begin{align*}
&\im(\partial_{\alpha}z\cdot\Lambda(\de^{\bot})\frac{\varpi}{A^{2}(t)})=\re(\partial_{\alpha}z\frac{\varpi}{A^{2}(t)})\cdot\im(\Lambda(\de^{\bot}))+\im(\partial_{\alpha}z\frac{\varpi}{A^{2}(t)})\cdot\re(\Lambda(\de^{\bot}))
\end{align*}
we have,
\begin{align*}
Q^{'22}_{4}&=-4\pi\int_{\T}\re(\de)\cdot\im(\partial_{\alpha}z)\re(\partial_{\alpha}z\frac{\varpi}{A^{2}(t)})\cdot\im(\Lambda(\de^{\bot}))d\alpha\\
&-4\pi\int_{\T}\re(\de)\cdot\im(\partial_{\alpha}z)\im(\partial_{\alpha}z\frac{\varpi}{A^{2}(t)})\cdot\re(\Lambda(\de^{\bot}))d\alpha\\
&\le\est{4}+k\norm{\im(\partial_{\alpha}z)\re(\partial_{\alpha}z\frac{\varpi}{A^{2}(t)})}_{H^{2}(S)}\norm{\Lambda^{\frac{1}{2}}\de}^{2}_{\s}\\
&+\est{4}+c\norm{\im(\partial_{\alpha}z)\im(\partial_{\alpha}z\frac{\varpi}{A^{2}(t)})}_{H^{2}(S)}\norm{\Lambda^{\frac{1}{2}}\de}^{2}_{\s}.
\end{align*}

Using a similar method for the rest of non-integrable terms we obtain 

\begin{align*}
&J_{2}\le\est{4}\\
&+C(\norm{\im(\partial_{\alpha}z)\re(\partial_{\alpha}z\frac{\varpi}{A^{2}(t)})}_{H^{2}(S)}+\norm{\im(\partial_{\alpha}z)\im(\partial_{\alpha}z\frac{\varpi}{A^{2}(t)})}_{H^{2}(S)}+\norm{\im(c)}_{H^{2}(S)})\norm{\Lambda^{\frac{1}{2}}\de}^{2}_{\s}.
\end{align*}

In conclusion,
\begin{align*}
&I_{1}\le\est{4}+C[\norm{\im(\frac{\varpi}{A(t)})}_{H^{2}(S)}\norm{\im(\partial_{\alpha}z)\re(\partial_{\alpha}z\frac{\varpi}{A^{2}(t)})}_{H^{2}(S)}\\
&+\norm{\im(\partial_{\alpha}z)\im(\partial_{\alpha}z\frac{\varpi}{A^{2}(t)})}_{H^{2}(S)}+\norm{\im(c)}_{H^{2}(S)}-m(t)]\norm{\Lambda^{\frac{1}{2}}\de}^{2}_{\s}
\end{align*}
and therefore
\begin{align}
\label{energy}
&\frac{1}{2}\frac{d}{dt}\int_{\T}\abs{\de(\gamma)}^{2}d\alpha=I_{1}+I_{2}\nonumber\\
&\le\est{4}+C[\norm{\im(\frac{\varpi}{A(t)})}_{H^{2}(S)}\norm{\im(\partial_{\alpha}z)\re(\partial_{\alpha}z\frac{\varpi}{A^{2}(t)})}_{H^{2}(S)}\nonumber\\
&+\norm{\im(\partial_{\alpha}z)\im(\partial_{\alpha}z\frac{\varpi}{A^{2}(t)})}_{H^{2}(S)}+\norm{\im(c)}_{H^{2}(S)}-m(t)+2\lambda]\norm{\Lambda^{\frac{1}{2}}\de}^{2}_{\s}.
\end{align}
%%%%%%%%%%%%%%%%%%%%%%%%%%%%%%%%%%%%%%%%%%%%%%%%%%%%%%%%%%%%%%%%%%%%%%%%%%%%%%%%%%%%%%%%%%%%%%%%%%%%%%%%%%

\section{The evolution of the minimum of $\sigma(\gamma,t)$}
\label{min}

Taking the divergence in Darcy's law we obtain
\begin{displaymath}
\Delta p=0.
\end{displaymath}
Since the pressure is zero on the interface and recalling that the flow is irrotational in the interior of the domain $\Omega$ by the Hopf's lemma we have
\begin{displaymath}
\sigma(\alpha,t)=-\frac{\partial p}{\partial\eta}|_{z(\alpha,t)}>0.
\end{displaymath}

In spite of this property, we need to get an a priori estimate for the evolution of the minimum of $\sigma$ in the strip $S$ in order to absorb the high order terms in (\ref{energy}).

 Recall that
\begin{equation}
\label{sig}
\sigma(\alpha,t)=\frac{\mu^{2}}{\kappa}BR(z,\varpi)(\alpha,t)\cdot\partial^{\bot}_{\alpha}z(\alpha,t)+g\rho^{2}\partial_{\alpha}\z(\alpha,t).
\end{equation}
\begin{lem}
Let $z(\gamma,t)$ be a solution of the system with $z(\gamma,t)\in\mathcal{C}([0,T];H^{4}(S))\cap\mathcal{C}^{1}([0,T];H^{3}(S))$, and 
$$m(t)=\min_{\gamma}\sigma(\gamma,t).$$
Then
$$m(t)\ge m(0)-\int^{t}_{0}\est{4}ds.$$
\end{lem}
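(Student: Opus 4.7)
The plan is to establish a uniform bound $\|\partial_{t}\sigma(\cdot,t)\|_{L^{\infty}(S)}\le\est{4}$ and then conclude by a Danskin-type envelope argument.

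Differentiating the formula (\ref{sig}) in time produces three kinds of contributions: $\partial_{t}BR(z,\varpi)\cdot\partial^{\bot}_{\alpha}z$, $BR(z,\varpi)\cdot\partial_{t}\partial^{\bot}_{\alpha}z$, and $\partial_{t}\partial_{\alpha}\z$. The last two reduce to estimating $\partial_{\alpha}z_{t}=\partial_{\alpha}BR+\partial_{\alpha}c\,\partial_{\alpha}z+c\,\partial^{2}_{\alpha}z$ in $L^{\infty}(S)$, which follows directly from (\ref{bir}), the formula (\ref{c}) for $c$, and the embedding $H^{2}(S)\hookrightarrow L^{\infty}(S)$. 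For $\partial_{t}BR$ I would differentiate (\ref{br}) under the integral sign and substitute $z_{t}=BR+c\,\partial_{\alpha}z$ into the kernel; the variation of the Cauchy-type kernel and the $\partial_{t}\varpi$ piece each generate terms with the same singularity order as $\partial_{\alpha}BR$. Differentiating (\ref{vor}) in time expresses $\partial_{t}\varpi$ linearly in terms of $\partial_{t}BR$, and the resulting fixed-point coupling is inverted exactly as (\ref{vor}) itself was inverted in \cite{hele}. All the singular-integral pieces that appear can then be controlled by the integration-by-parts and commutator manipulations already developed in subsections \ref{estbr} and \ref{look}, with (\ref{ampli}) and (\ref{bir}) supplying the necessary $H^{k}$ bounds on $\varpi$ and $BR(z,\varpi)$.

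Once $\|\partial_{t}\sigma\|_{L^{\infty}(S)}\le\est{4}$ is in hand, I would invoke Danskin's theorem: since $\sigma$ is $C^{1}$ in $t$ and continuous in $\gamma$ on the closure of the strip, $m(t)$ is locally Lipschitz in $t$ with $m'(t)\ge-\|\partial_{t}\sigma(\cdot,t)\|_{L^{\infty}(S)}$ almost everywhere, and integrating on $[0,t]$ gives
$$m(t)\ge m(0)-\int_{0}^{t}\est{4}\,ds.$$
The minor subtlety that the strip $S$ widens with $t$ is handled by using harmonicity of $\re\sigma$ to reduce the minimization to the two boundary lines $|\im\gamma|=\lambda t$, which can then be parameterized by the real variable $\alpha$ at each time.

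The main obstacle is the $L^{\infty}(S)$ control of $\partial_{t}BR$: after substituting $z_{t}=BR+c\,\partial_{\alpha}z$ the resulting terms carry the same singularity as $\partial_{\alpha}BR(z,\varpi)$, which is the borderline case for boundedness of a $1$D singular integral. These must be handled by the commutator-plus-symmetrization strategy of section \ref{esti}: the non-integrable parts either cancel by antisymmetry of the Birkhoff--Rott kernel, or reduce to a Hilbert transform of a smooth function via the commutator estimate (\ref{conmutador}), at which point (\ref{ampli})--(\ref{bir}) close the estimate in the form $\est{4}$.
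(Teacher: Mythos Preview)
Your approach is essentially the paper's: differentiate $\sigma$ in time, bound each contribution uniformly by $\est{4}$, and conclude by an envelope argument. The paper picks a minimizer $\gamma_{t}$ and writes $m'(t)=\sigma_{t}(\gamma_{t},t)$ directly, then decomposes $\partial_{t}BR$ into the pieces $J_{1},J_{2},J_{3}$ coming from the kernel variation and from $\varpi_{t}$, handling each by the $B(\gamma,\beta)$-type subtraction of the leading singularity rather than by commutators; for $\varpi_{t}$ it quotes section~9 of \cite{hele}, exactly the fixed-point inversion you describe.

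One point to tighten: because $S(t)$ widens, once you reduce (by harmonicity) to the boundary lines $\gamma=\alpha\pm i\lambda t$, Danskin gives $m'(t)=\re\big[\partial_{t}\sigma(\gamma_{t},t)\pm i\lambda\,\partial_{\alpha}\sigma(\gamma_{t},t)\big]$, not just $\partial_{t}\sigma$. The paper absorbs the extra $\pm i\lambda\,\partial_{\alpha}\sigma$ contribution into its terms $R_{2},R_{4},R_{6}$, which are trivially bounded by $\|\partial_{\alpha}BR\|_{L^{\infty}(S)}$, $\|z\|_{\mathcal{C}^{2}(S)}$. Your write-up should make that extra term explicit; otherwise the Danskin step as stated is not quite correct.
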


\begin{proof}
We may consider $\gamma_{t}\in\C$ such that
$$m(t)=\min_{\gamma}\sigma(\gamma,t)=\sigma(\gamma_{t},t).$$

We may calculate the derivative of $m(t)$, to obtain
$$m'(t)=\sigma_{t}(\gamma_{t},t).$$

The identity (\ref{sig}) yields,
\begin{align*}
\sigma_{t}(\gamma,t)&=\frac{\mu^{2}}{\kappa}\partial_{t}BR(z,\varpi)(\gamma,t)\cdot\partial^{\bot}_{\alpha}z(\gamma,t)+i\lambda\frac{\mu^{2}}{\kappa}\partial_{\alpha}BR(z,\varpi)(\gamma,t)\cdot\partial^{\bot}_{\alpha}z(\gamma,t)\\
&+\frac{\mu^{2}}{\kappa}BR(z,\varpi)(\gamma,t)\cdot\partial^{\bot}_{\alpha}z_{t}(\gamma,t)+\frac{\mu^{2}}{\kappa}BR(z,\varpi)(\gamma,t)\cdot i\lambda\partial^{2}_{\alpha}z(\gamma,t)\\
&+g\rho^{2}\partial_{\alpha}\z_{t}(\gamma,t)+g\rho^{2}\partial^{2}_{\alpha}\z(\gamma,t)\equiv R_{1}+R_{2}+R_{3}+R_{4}+R_{5}+R_{6}.
\end{align*}

We can easily estimate,
\begin{align*}
&\abs{R_{2}}\le \lambda\frac{\mu^{2}}{\kappa}\norm{\partial_{\alpha}BR(z,\varpi)}_{L^{\infty}(S)}\norm{\partial_{\alpha}z}_{L^{\infty}(S)}\le\est{4},\\
&\abs{R_{4}}\le \lambda\frac{\mu^{2}}{\kappa}\norm{BR(z,\varpi)}_{L^{\infty}(S)}\norm{z}_{\mathcal{C}^{2}(S)}\le\est{4},\\
&\abs{R_{6}}\le g\rho^{2}\norm{z}_{\mathcal{C}^{2}(S)}\le\est{4},
\end{align*}
and we have,
\begin{displaymath}
\abs{R_{3}}+\abs{R_{5}}\le C(\norm{BR(z,\varpi)}_{L^{\infty}(S)}+1)\norm{\partial_{\alpha}z_{t}}_{L^{\infty}(S)}.
\end{displaymath}
Since $z_{t}(\gamma)=BR(z,\varpi)(\gamma)+c(\gamma)\partial_{\alpha}z(\gamma)$,
\begin{align*}
&\norm{\partial_{\alpha}z_{t}}_{L^{\infty}(S)}\le\norm{\partial_{\alpha}BR(z,\varpi)}_{L^{\infty}(S)}+\norm{\partial_{\alpha}c}_{L^{\infty}(S)}\norm{\partial_{\alpha}z}_{L^{\infty}(S)}+\norm{c}_{L^{\infty}(S)}\norm{\partial^{2}_{\alpha}z}_{L^{\infty}(S)}\\
&\le C\norm{\partial_{\alpha}BR(z,\varpi)}_{L^{\infty}(S)}(1+\norm{\F}^{\frac{1}{2}}_{L^{\infty}(S)}\norm{z}_{\mathcal{C}^{2}(S)})\le\est{4}.
\end{align*}
Then,
\begin{displaymath}
\abs{R_{3}+R_{5}}\le\est{4}.
\end{displaymath}

Recall that
\begin{displaymath}
BR(z,\varpi)(\gamma)=\frac{1}{2\pi}\int_{\T}\frac{\Delta z^{\bot}}{\abs{\Delta z}^{2}}\varpi(\gamma-\beta)d\beta,
\end{displaymath}
then
\begin{align*}
BR_{t}(z,\varpi)(\gamma)&=\frac{1}{2\pi}\int_{\T}\frac{\Delta z_{t}^{\bot}}{\abs{\Delta z}^{2}}\varpi(\gamma-\beta)d\beta-\frac{1}{\pi}\int_{\T}\frac{\Delta z^{\bot}(\Delta z\cdot\Delta z_{t})}{\abs{\Delta z}^{4}}\varpi(\gamma-\beta)d\beta\\
&+\frac{1}{2\pi}\int_{\T}\frac{\Delta z^{\bot}}{\abs{\Delta z}^{2}}\varpi_{t}(\gamma-\beta)d\beta\equiv J_{1}+J_{2}+J_{3}.
\end{align*}

We get
\begin{align*}
J_{1}&=\frac{1}{2\pi}\int_{\T}\Delta z_{t}^{\bot}\varpi(\gamma-\beta)(\frac{1}{\abs{\Delta z}^{2}}-\frac{1}{A(t)\beta^{2}})d\beta\\
&+\frac{1}{2\pi}\int_{\T}\frac{\Delta z_{t}^{\bot}}{A(t)\beta^{2}}\varpi(\gamma-\beta)d\beta\equiv K_{1}+K_{2}.
\end{align*}
Using that $\Delta z_{t}^{\bot}=\beta\int _
{0}^{1}\partial_{\alpha}z_{t}(\phi)ds,$
\begin{align*}
K_{1}&=\frac{1}{2\pi}\int_{\T}\varpi(\gamma-\beta)\beta\int _
{0}^{1}\partial_{\alpha}z_{t}^{\bot}(\phi)dsB(\gamma,\beta)d\beta\\
&\le C\norm{\F}^{\frac{3}{2}}_{L^{\infty}(S)}\norm{z}_{\mathcal{C}^{2}(S)}\norm{\varpi}_{L^{\infty}(S)}\norm{\partial_{\alpha}z_{t}}_{L^{\infty}(S)}\le\est{4}.
\end{align*}
Since
$$\partial^{2}_{\alpha}z_{t}=\partial^{2}BR(z,\varpi)+\partial^{2}_{\alpha}c\partial_{\alpha}z+2\partial_{\alpha}c\partial^{2}_{\alpha}z+c\partial^{3}z$$
and
\begin{align*}
&\norm{\partial^{2}_{\alpha}BR(z,\varpi)}_{L^{\infty}(S)}\le\est{4},\\
&\norm{\partial^{2}_{\alpha}c\partial_{\alpha}z}_{L^{\infty}(S)}=\norm{\frac{\partial^{2}_{\alpha}z}{\abs{\partial_{\alpha}z}^{2}}\cdot\partial_{\alpha}BR(z,\varpi)\partial_{\alpha}z}_{L^{\infty}(S)}+\norm{\frac{\partial_{\alpha}z}{\abs{\partial_{\alpha}z}^{2}}\cdot\partial^{2}_{\alpha}BR(z,\varpi)\partial_{\alpha}z}_{L^{\infty}(S)}\\
&\le C\norm{\F}^{\frac{1}{2}}_{L^{\infty}(S)}\norm{z}_{\mathcal{C}^{2}(S)}(\norm{\partial_{\alpha}BR(z,\varpi)}_{L^{\infty}(S)}+\norm{\partial^{2}_{\alpha}BR(z,\varpi)}_{L^{\infty}(S)})\\
&\le\est{4},\\
&2\norm{\partial_{\alpha}c\partial^{2}_{\alpha}z}_{L^{\infty}(S)}\le 4\norm{\F}^{\frac{1}{2}}_{L^{\infty}(S)}\norm{\partial_{\alpha}BR(z,\varpi)}_{L^{\infty}(S)}\norm{\partial^{2}_{\alpha}z}_{L^{\infty}(S)}\\
&\le\est{4},
\end{align*}

then
$$\norm{\partial^{2}_{\alpha}z_{t}}_{L^{\infty}(S)}\le\est{4}.$$
Thus,
\begin{align*}
K_{2}&=\frac{1}{2\pi}\int_{\T}\frac{\int _
{0}^{1}\partial_{\alpha}z_{t}^{\bot}(\phi)ds}{A(t)\beta}\varpi(\gamma-\beta)d\beta\\
&=\frac{1}{2\pi}\int_{\T}\frac{\int _
{0}^{1}\partial_{\alpha}z_{t}^{\bot}(\phi)-\partial_{\alpha}z_{t}^{\bot}(\gamma)ds}{A(t)\beta}\varpi(\gamma-\beta)d\beta+\frac{1}{2\pi}\int_{\T}\frac{\partial_{\alpha}z_{t}^{\bot}(\gamma)}{A(t)\beta}\varpi(\gamma-\beta)d\beta\\
&=\frac{1}{2\pi}\int_{\T}\frac{\int _
{0}^{1}\int _
{0}^{1}\partial^{2}_{\alpha}z_{t}^{\bot}(\psi)(s-1)dtds}{A(t)}\varpi(\gamma-\beta)d\beta+\frac{1}{2}\frac{\partial_{\alpha}z_{t}^{\bot}(\gamma)}{A(t)}H(\varpi)(\gamma)\\
&\le C\norm{\F}_{L^{\infty}(S)}\norm{\partial^{2}_{\alpha}z_{t}}_{L^{\infty}(S)}\norm{\varpi}_{L^{\infty}(S)}+K\norm{\F}_{L^{\infty}(S)}\norm{\partial_{\alpha}z_{t}}_{L^{\infty}(S)}\norm{\varpi}_{\mathcal{C}^{\delta}(S)}.
\end{align*}
Therefore,
$$J_{1}\le\est{4}.$$
In the same way, it is easy to see that
$$J_{2}\le\est{4}.$$

Finally, since
\begin{align*}
&\frac{\Delta z^{\bot}}{\abs{\Delta z}^{2}}-\frac{\partial_{\alpha}z^{\bot}(\gamma)}{A(t)\beta}=\frac{\beta^{2}\int _
{0}^{1}\int _
{0}^{1}\partial^{2}_{\alpha}z(\psi)(s-1)dtds}{\abs{\Delta z}^{2}}\\
&+\frac{\beta^{2}\partial_{\alpha}z(\gamma)\int _
{0}^{1}\int _
{0}^{1}\partial^{2}_{\alpha}z(\psi)(s-1)dtds\cdot\int _
{0}^{1}[\partial_{\alpha}z(\gamma)+\partial_{\alpha}z(\phi)]ds}{A(t)\abs{\Delta z}^{2}},
\end{align*}
\begin{align*}
J_{3}&=\frac{1}{2\pi}\int_{\T}(\frac{\Delta z^{\bot}}{\abs{\Delta z}^{2}}-\frac{\partial_{\alpha}z^{\bot}(\gamma)}{A(t)\beta})\varpi_{t}(\gamma-\beta)d\beta+\frac{1}{2\pi}\int_{\T}\frac{\partial_{\alpha}z^{\bot}(\gamma)}{A(t)\beta}\varpi_{t}(\gamma-\beta)d\beta\\
&\equiv K_{5}+K_{6}
\end{align*}
where
\begin{align*}
&K_{5}\le C\norm{\F}_{L^{\infty}(S)}\norm{z}_{\mathcal{C}^{2}(S)}\norm{\varpi_{t}}_{\s},\\
&K_{6}=\frac{1}{2}\frac{\partial_{\alpha}z^{\bot}(\gamma)}{A(t)}H(\varpi_{t})(\gamma)\le C\norm{\F}^{\frac{1}{2}}_{L^{\infty}(S)}\norm{\varpi_{t}}_{\mathcal{C}^{\delta}(S)}.
\end{align*}

In order to control $\norm{\varpi_{t}}_{\mathcal{C}^{\delta}(S)}$ we proceed as in section $9$ in \cite{hele}.

Therefore,
$$\abs{\sigma_{t}(\gamma,t)}\le\est{4}$$

given us,
$$m'(t)\ge-\est{4}$$
for almost every t. And a further integration yields
$$m(t)\ge m(0)-\int^{t}_{0}\est{4}ds.$$
\end{proof}

%%%%%%%%%%%%%%%%%%%%%%%%%%%%%%%%%%%%%%%%%%%%%%%%%%%%%%%%%%%%%%%%%%%%%%%%%%%%%%%%%%%%%%%%%%%%%%%%%%%%%%%%%%%%%%

\section{Instant Analyticity}
\label{inst}
\begin{thm}
\label{thinst}
Let $z(\alpha,0)=z_{0}(\alpha)\in H^{4}$, $\F(z_{0})(\alpha,\beta)\in L^{\infty}$. Then there exists a solution of the Muskat problem $z(\alpha,t)$ defined for $0<t\le T$ that continues analytically into the strip $S(t)=\{\alpha\pm i\varsigma :\abs{\varsigma}<\lambda t\}$ for each $t$. Here, $\lambda$ and $T$ are determined by upper bounds of the $H^{4}$ norm and the arc-chord constant of the initial data and a positive lower bound of the $\sigma(\alpha,0)$. Moreover, for $0<t\le T$, the quantity
$$\sum_{\pm}\int_{\T}(\abs{z(\alpha\pm i\lambda t)-(\alpha\pm i\lambda t)}^{2}+\abs{\de(\alpha\pm i\lambda t)}^{2})d\alpha$$
is bounded by a constant determinate by upper bounds for the $H^{4}$ norm and the arc-chord constant of the initial data and a positive lower bound of $\sigma(\alpha,0)$.
\end{thm}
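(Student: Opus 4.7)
The plan is to combine the energy inequality (\ref{energy}) with the lower bound on $m(t)$ established in Section~\ref{min} via a continuity/bootstrap argument applied to approximate solutions obtained by regularizing the initial data. First, I would mollify $z_{0}$ (e.g.\ via convolution with a heat kernel) to obtain a family $\{z_{0}^{\epsilon}\}$ of entire functions converging to $z_{0}$ in $H^{4}(\T)$, with $\norm{\mathcal{F}(z_{0}^{\epsilon})}_{L^{\infty}}$ and $\min_{\alpha}\sigma^{\epsilon}(\alpha,0)$ within a fixed factor of those of $z_{0}$, uniformly in $\epsilon$. The local existence theory of \cite{hele} produces a smooth solution $z^{\epsilon}(\alpha,t)$ for a short time, and since the data is entire and the arc-chord condition persists, $z^{\epsilon}$ extends analytically into some complex strip around $\R$. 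This is where the analytic machinery from Section~\ref{esti} can be applied with $z$ replaced by $z^{\epsilon}$.

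Second, and this is the crux, I would estimate $z^{\epsilon}$ in the strip $S(t)=\{\alpha\pm i\zeta:\abs{\zeta}<\lambda t\}$ with $\lambda$ fixed so that $2\lambda<m(0)$, as suggested by the footnote. Invoking (\ref{energy}) together with (\ref{l2}), the only obstruction to a closed Gr\"onwall inequality is the coefficient
$$K(t):=C\Bigl[\norm{\im(\tfrac{\varpi}{A(t)})}_{H^{2}(S)}+\norm{\im(\partial_{\alpha}z)\re(\partial_{\alpha}z\tfrac{\varpi}{A^{2}(t)})}_{H^{2}(S)}+\norm{\im(\partial_{\alpha}z)\im(\partial_{\alpha}z\tfrac{\varpi}{A^{2}(t)})}_{H^{2}(S)}+\norm{\im(c)}_{H^{2}(S)}-m(t)+2\lambda\Bigr]$$
multiplying $\norm{\Lambda^{1/2}\de{}}^{2}_{\s}$. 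The decisive observation is that at $t=0$ the strip collapses to $\R$, the data is real, and every $\im(\cdot)$-norm above vanishes; hence $K(0)=-m^{\epsilon}(0)+2\lambda<0$. By continuity in $t$, $K(t)\le 0$ on a maximal sub-interval, on which the dissipation term is absorbed and we are left with
$$\frac{d}{dt}\norm{z^{\epsilon}}^{2}_{H^{4}(S)}\le\esteps{4}.$$
Coupling this with a propagation estimate for $\norm{\mathcal{F}(z^{\epsilon})}_{L^{\infty}(S)}$ and with $m^{\epsilon}(t)\ge m^{\epsilon}(0)-\int_{0}^{t}\esteps{4}ds$ from Section~\ref{min}, one obtains a closed coupled system whose solution stays bounded on some $[0,T]$ with $T>0$ depending only on $\norm{z_{0}}_{H^{4}}$, $\norm{\mathcal{F}(z_{0})}_{L^{\infty}}$ and $m(0)$, uniformly in $\epsilon$.

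Third, a standard continuation argument shows that the bootstrap hypotheses ($K\le 0$, $H^{4}(S)$-bound, arc-chord bound, positivity of $m^{\epsilon}$) all persist on this same $[0,T]$: if the maximal absorbing time $T_{\epsilon}^{\ast}$ were strictly less than $T$, the uniform Gr\"onwall control would force $K(T_{\epsilon}^{\ast})$ to remain strictly negative, contradicting maximality. Standard weak-$*$ compactness then extracts a limit $z\in L^{\infty}([0,T];H^{4}(S))$ satisfying the stated bound; analyticity in the strip transfers to the limit since each $z^{\epsilon}(\cdot\pm i\zeta,t)$ is analytic in $\zeta$ for $\abs{\zeta}<\lambda t$ and the uniform $H^{4}(S)$ bound promotes weak convergence to locally uniform convergence on compact subsets of the strip, whence the limit inherits analyticity by Morera's theorem.

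The main obstacle I expect is the bootstrap step: one must verify that each of the $\im(\cdot)$-norms entering $K(t)$ is controlled by $\norm{z^{\epsilon}}_{H^{4}(S)}$ via (\ref{ampli}), (\ref{bir}) and the formula (\ref{c}) for $c$, and crucially tends to zero as $t\to 0^{+}$ with a rate independent of $\epsilon$, so that absorption and Gr\"onwall closure happen simultaneously and produce an $\epsilon$-uniform lifespan. A secondary subtlety is that $m(t)=\min_{\gamma}\sigma(\gamma,t)$ is minimized over a strip that itself grows with $t$, so the differentiation step used in the lemma of Section~\ref{min} must be justified via a one-sided envelope argument when the minimizing $\gamma_{t}$ sits on the moving boundary $\im(\gamma)=\pm\lambda t$. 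Once these two points are controlled, the theorem follows from the machinery already assembled in Sections~\ref{esti} and~\ref{min}.
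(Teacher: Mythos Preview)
Your proposal is correct and follows essentially the same route as the paper: mollify the data with a heat kernel, observe that all the $\im(\cdot)$-norms vanish at $t=0$ so the coefficient in front of $\norm{\Lambda^{1/2}\de{}}^{2}_{\s}$ starts strictly negative, propagate this via continuity together with the arc-chord and $m(t)$ estimates, and pass to the limit. The only packaging difference is that the paper bundles the bootstrap into a single energy $\norm{z}_{RT}=\norm{\F}^{2}_{L^{\infty}(S)}+\norm{z}^{2}_{H^{4}(S)}+\frac{1}{m(t)-2\lambda-C\norm{f}}$ and shows $\frac{d}{dt}\norm{z}_{RT}\le\exp C(\norm{z}_{RT})$, which automatically encodes the sign condition, whereas you run an explicit open--closed continuation argument; and the paper regularizes the \emph{equation} as well (inserting $\phi_{\epsilon}*\phi_{\epsilon}*$ into the formula for $\varpi^{\epsilon}$) so that Picard's theorem in $H^{4}$ immediately yields solutions that are analytic in the whole complex plane, rather than relying on propagation of analyticity from entire data under the unregularized flow.
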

\begin{proof}
For all estimates in above sections we have finally
\begin{align*}
\frac{1}{2}\frac{d}{dt}\int_{\T}\abs{\de(\alpha\pm i\lambda t)}^{2}&\le\est{4}\\
&+(2\lambda+C\norm{f}(t)-m(t))\norm{\Lambda^{\frac{1}{2}}\de}^{2}_{\s}(t)
\end{align*}
where
$$\norm{f}(t)=\norm{\im(\frac{\varpi}{A(t)})}_{H^{2}(S)}+\norm{\im(\partial_{\alpha}z)\re(\partial_{\alpha}z\varpi)}_{H^{2}(S)}+\norm{\im(\partial_{\alpha}z)\im(\partial_{\alpha}z\varpi)}_{H^{2}(S)}+\norm{\im(c)}_{H^{2}(S)}.$$

Note that $\norm{f}(0)=0$. If $2\lambda -m(0)<0$ we will show that 
$$2\lambda+K\norm{f}(t)-m(t)<0$$
for short time. It yields
$$\frac{1}{2}\frac{d}{dt}\int_{\T}\abs{\de(\alpha\pm i\lambda t)}^{2}d\alpha\le\est{4}$$
as long as $2\lambda+K\norm{f}(t)-m(t)<0$. We proceed as in section $8$ in \cite{hele} to show that
$$\frac{d}{dt}\norm{\F}_{L^{\infty}(S)}\le\est{4}.$$
From the two inequalities above and (\ref{l2}) it is easy to obtain a priori energy estimates that depend upon the negativity of $2\lambda+K\norm{f}(t)-m(t)$.
We denote
\begin{align*}
\norm{z}_{RT}(t)\equiv\norm{\F}_{L^{\infty}(S)}^{2}(t)+\norm{z}_{\s}^{2}+\frac{1}{m(t)-2\lambda-C\norm{f}}.
\end{align*}

At this point it is easy to find
\begin{displaymath}
\norm{f}\le\est{4}
\end{displaymath}
and
\begin{align*}
\frac{d}{dt}(\frac{1}{m(t)-2\lambda-C\norm{f}})\le\frac{\est{4}}{(m(t)-2\lambda-C\norm{f})^{2}}
\end{align*}
then,
$$\frac{d}{dt}\norm{z}_{RT}(t)\le \exp C(\norm{z}_{RT}(t))$$
and therefore,
$$\norm{z}_{RT}\le -\log(\exp(-C\norm{z}_{RT}(0)-C^{2}t)).$$
Now we approximate the problem as follows,
\begin{displaymath}
\left\{ \begin{array}{ll}
z^{\epsilon}_{t}(\alpha,t)=BR(z^{\epsilon},\varpi^{\epsilon})(\alpha,t)+c^{\epsilon}(\alpha,t)\partial_{\alpha}z^{\epsilon}(\alpha,t)\\
z^{\epsilon}(\alpha,0)=\phi_{\epsilon}*z_{0}(\alpha)
\end{array} \right.
\end{displaymath}

where
\begin{align*}
c^{\epsilon}(\alpha,t)&=\frac{\alpha+\pi}{2\pi}\int_{\T}\frac{\partial_{\alpha}z^{\epsilon}(\alpha,t)}{\abs{\partial_{\alpha}z^{\epsilon}(\alpha,t)}^{2}}\cdot\partial_{\alpha}BR(z^{\epsilon},\varpi^{\epsilon})(\alpha,t)d\alpha\\
&-\int_{-\pi}^{\alpha}\frac{\partial_{\alpha}z^{\epsilon}(\beta,t)}{\abs{\partial_{\alpha}z^{\epsilon}(\beta,t)}^{2}}\cdot\partial_{\alpha}BR(z^{\epsilon},\varpi^{\epsilon})(\beta,t)d\beta,
\end{align*}
$$\varpi^{\epsilon}(\alpha,t)=-\phi_{\epsilon}*\phi_{\epsilon}*(2BR(z^{\epsilon},\varpi^{\epsilon})\cdot\partial_{\alpha}z^{\epsilon})(\alpha)-2\kappa\frac{\rho^{2}}{\mu^{2}}\phi_{\epsilon}*\phi_{\epsilon}*(\partial_{\alpha}\zz^{\epsilon})(\alpha)$$
where $\phi_{\epsilon}(\alpha)=\phi(\frac{\alpha}{\epsilon})/\epsilon$ for $\epsilon>0$ and $\phi$ the heat kernel.

 Picard's Theorem yields the existence of a solution $z^{\epsilon}(\alpha)$ in $\mathcal{C}([0,T^{\epsilon});H^{4})$ which is analytic in the whole space for $z_{0}$ satisfying the arc-chord condition and $\epsilon$ small enough. Using the same techniques we have devoted above we obtain a bound for $z^{\epsilon}(\alpha,t)$ in $H^{4}$ in the strip $S(t)$ for a small enough $T$ which is independent of $\epsilon$. We need arc-chord condition, $z_{0}\in H^{4}$ and $2\lambda-m(0)<0$. Then we pass to the limit.
\end{proof}
%%%%%%%%%%%%%%%%%%%%%%%%%%%%%%%%%%%%%%%%%%%%%%%%%%%%%%%%%%%%%%%%%%%%%%%%%%%%%%%%%%%%%%%%%%%%%%%%%%%%%%%%%%%%%
\section{Decay estimates on the strip of analyticity}
\label{decai}
\begin{thm}
\label{decaimiento}
Let $z(\alpha,0)=z^{0}(\alpha)$ be an analytic curve in the strip
$$S=\{\alpha+i\varsigma\in\C:\abs{\varsigma}<h(0)\},$$
with $h(0)>0$ and satisfying:
\begin{enumerate}
\item[*]The arc-chord condition, $\mathcal{F}(z^{0})(\alpha+i\varsigma,\beta)\in L^{\infty}(S\times\R)$
\item[*]The curve $z^{0}(\alpha)$ is real for real $\alpha$
\item[*]The functions $z_{1}^{0}(\alpha)-\alpha$ and $\zz^{0}(\alpha)$ are periodic with period $2\pi$
\item[*]The functions $\z^{0}(\alpha)-\alpha$ and $\zz^{0}(\alpha)$ belong to $H^{4}(S)$
\end{enumerate}
Then there exist a time T and a solution of the Muskat problem $z(\alpha,t)$ defined for $0<t\le T$
that continues analytically into some complex strip for each fixed $t\in\lbrack 0,T\rbrack$. Here $T$ is either
a small constant depending only on $\exp C(\norm{\mathcal{F}(z^{0})}_{L^{\infty}(S)}^{2}+\norm{z^{0}}^{2}_{\s})$.
\end{thm}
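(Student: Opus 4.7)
The plan is to adapt the energy-estimate machinery of Sections \ref{esti}--\ref{inst} to a strip of analyticity with a time-dependent half-width. Let $S(t)=\{\alpha+i\varsigma:\abs{\varsigma}<h(t)\}$ with $h(0)>0$ given by the hypothesis, and let $h(t)$ evolve according to an ODE to be prescribed below. Differentiating $\norm{\de{}}^{2}_{L^{2}(S(t))}$ in time, the integrand-level terms are exactly those already controlled in Sections \ref{estbr}--\ref{cest}; the new contribution from the motion of the boundary of $S(t)$ is the exact analog of $I_{2}$ in Section \ref{estbr} and, by the same Hilbert-transform manipulation, satisfies
\begin{align*}
I_{2}(t)\le 2h'(t)\,\norm{\Lambda^{\frac{1}{2}}\de{}}^{2}_{L^{2}(S(t))}.
\end{align*}
In contrast to Theorem \ref{thinst}, where $h(t)=\lambda t$ was forced to grow and this term was a nuisance, here $h'(t)<0$ produces a favorable sign that absorbs the high-order $\Lambda^{\frac{1}{2}}$ contributions already present in (\ref{energy}).

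Combining the above with Sections \ref{estbr}--\ref{cest} and the minimum-decay estimate of Section \ref{min}, the principal inequality reads
\begin{align*}
\frac{1}{2}\frac{d}{dt}\norm{\de{}}^{2}_{L^{2}(S(t))}\le\est{4}+\bigl[2h'(t)+C\norm{f}(t)-m(t)\bigr]\norm{\Lambda^{\frac{1}{2}}\de{}}^{2}_{L^{2}(S(t))},
\end{align*}
with $\norm{f}(t)$ as in Theorem \ref{thinst}. I would then prescribe $h(t)$ by an ODE that makes the bracket nonpositive, for instance
\begin{align*}
h'(t)=-\tfrac{1}{2}\bigl(C\norm{f}(t)-m(t)+\eta\bigr)
\end{align*}
for a small $\eta>0$. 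Since $\norm{f}(0)=0$ and $m(0)>0$, the rate $\abs{h'(0)}$ is a priori controlled by the initial data and $h$ stays strictly positive for short time. With the bracket absorbed, the remaining inequality together with the analogous control of $\norm{\F}_{L^{\infty}(S(t))}$ (Section 8 of \cite{hele}) and the lower bound on $m(t)$ from Section \ref{min} reduce to a Gronwall ODE
\begin{align*}
\frac{d}{dt}\norm{z}_{RT}(t)\le\exp C(\norm{z}_{RT}(t)),
\end{align*}
for an augmented quantity $\norm{z}_{RT}(t)$ that now incorporates $1/h(t)$ and $1/(m(t)-C\norm{f}(t))$ alongside $\norm{\F}^{2}_{L^{\infty}(S(t))}$ and $\norm{z}^{2}_{H^{4}(S(t))}$. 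Integrating yields a lifespan $T$ and a strictly positive lower bound for $h(t)$ on $[0,T]$, both depending only on $\exp C(\norm{\F(z^{0})}^{2}_{L^{\infty}(S)}+\norm{z^{0}}^{2}_{H^{4}(S)})$ and $h(0)$.

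To convert these a priori bounds into an actual solution, I would use the mollification scheme from the end of Theorem \ref{thinst}: Picard's theorem provides entire-in-$\alpha$ solutions $z^{\epsilon}(\alpha,t)$, the estimates above are $\epsilon$-independent, and passage to the limit $\epsilon\to 0$ produces the desired $z(\alpha,t)$ analytic in $S(t)$. The main obstacle is verifying that \emph{every} step of the long pointwise, commutator and integration-by-parts analysis in Sections \ref{estbr}--\ref{cest} relies only on the smoothness of $z$ inside $S(t)$ together with the arc-chord and Rayleigh-Taylor conditions, and does not secretly use $h'(t)>0$ anywhere; once this is confirmed, the closure of the energy loop goes through verbatim. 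The remaining delicate point is the simultaneous maintenance, on the full interval $[0,T]$, of the two constraints $2h'(t)+C\norm{f}(t)-m(t)\le 0$ and $h(t)>0$, which is precisely what the augmented $\norm{z}_{RT}$-ODE is designed to guarantee.
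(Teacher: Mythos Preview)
Your outline has two genuine gaps that the paper's argument is specifically designed to avoid.

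First, the bound $I_{2}(t)\le 2h'(t)\,\norm{\Lambda^{1/2}\de{}}^{2}_{L^{2}(S(t))}$ is false when $h'(t)<0$. The manipulation in Section \ref{estbr} ends with a Cauchy--Schwarz step, which for either sign of $h'$ only yields $|I_{2}|\le 2|h'(t)|\,\norm{\Lambda^{1/2}\de{}}^{2}$; on the Fourier side the boundary-motion contribution is $4h'\sum_{k}|k|\sinh(2|k|h)\,|a_{k}|^{2}$, and since $\sinh\le\cosh$ you can only conclude $I_{2}\le 0$, not the stronger negative multiple of $\norm{\Lambda^{1/2}\de{}}^{2}$ that you need to absorb the remaining bad terms. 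The paper fixes this via Lemma \ref{corol}, a Fourier-side inequality that splits the boundary motion into a genuinely favorable piece $\tfrac{h'(t)}{10}\norm{\Lambda^{1/2}\de{}}^{2}_{L^{2}(S)}$ plus a residual $-10h'(t)\int_{\T}\Lambda(\de_{j})(\alpha)\overline{\de_{j}(\alpha)}\,d\alpha$ living on the \emph{real axis}, which is then controlled by $C\tfrac{|h'(t)|}{h(t)}\norm{z}^{2}_{H^{4}(S)}$.

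Second, your assertion that $\norm{f}(0)=0$ and $m(0)>0$ is incorrect here: the initial strip has width $h(0)>0$, so imaginary parts such as $\im\bigl(\varpi/A\bigr)$ and $\im(c)$ are nonzero at $t=0$, and the Rayleigh--Taylor function $\sigma(\gamma)$ is not even real off the real axis, so there is no a priori positive lower bound $m(0)$ to invoke. The paper sidesteps this entirely: it writes $\sigma(\gamma)=\sigma(\alpha)+h(t)g_{\pm}(\alpha)$, uses the (always valid, by Hopf) positivity of $\sigma(\alpha)$ on the real axis together with $2g\Lambda g\ge\Lambda(g^{2})$ to make that piece harmless, and lets the $h(t)g_{\pm}$ piece carry an explicit factor of $h(t)$ into the bracket. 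The resulting bracket is $\exp C(\cdots)\,h(t)+\tfrac{h'(t)}{10}+\exp C(\cdots)$, which is killed by an explicit exponential choice of $h(t)$; no control of $m(t)$ or smallness of $\norm{f}$ is needed. Your proposed ODE $h'=-\tfrac{1}{2}(C\norm{f}-m+\eta)$ would, under your own hypothesis $m>C\norm{f}$, actually give $h'>0$, contradicting the decreasing-strip mechanism you intend to use.
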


We will use the following:
\begin{lem}
\label{corol}
Let $\psi(\alpha\pm i\xi)=\sum_{k=-N}^{N}A_{k}(t)e^{ik\alpha}e^{\pm k\xi}$ and $h(t)>0$ be a decreasing function of $t$. Then
\begin{align*}
&\frac{\partial}{\partial t}\sum_{\pm}\int_{\T}\abs{\psi(\alpha\pm ih(t))}^{2}d\alpha\le\frac{h'(t)}{10}\sum_{\pm}\int_{\T}\Lambda\psi(\alpha\pm ih(t))\overline{\psi(\alpha\pm ih(t))}d\alpha\\
&-10h'(t)\int_{\T}\Lambda\psi(\alpha)\overline{\psi}(\alpha)d\alpha+2\re\sum_{\pm}\int_{\T}\psi_{t}(\alpha\pm ih(t))\overline{\psi(\alpha\pm ih(t))}d\alpha.
\end{align*}
\end{lem}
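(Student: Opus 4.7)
The right tool is Parseval: since $\psi$ is a finite trigonometric polynomial whose $k$-th Fourier coefficient is multiplied by $e^{\pm k h(t)}$ under evaluation on the line $\alpha\pm ih(t)$, every $L^{2}$-type integral appearing in the statement becomes an explicit series in $|A_{k}(t)|^{2}$, and the inequality decouples mode by mode. The strategy is therefore to expand both sides in Fourier, cancel what cancels between them, and reduce to a scalar inequality in the variable $x=2|k|h(t)$.

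Working out the LHS, Parseval gives $\sum_{\pm}\int_{\T}|\psi(\alpha\pm ih(t))|^{2}d\alpha=4\pi\sum_{k}|A_{k}(t)|^{2}\cosh(2|k|h(t))$. Differentiating in $t$ produces an intrinsic time-derivative piece $4\pi\sum_{k}\tfrac{d}{dt}|A_{k}|^{2}\cosh(2|k|h)$ plus a contour-motion piece $8\pi h'(t)\sum_{k}|k||A_{k}|^{2}\sinh(2|k|h)$ (using that $k\sinh(2kh)$ is even in $k$ and nonnegative). An analogous Fourier expansion of the three terms on the RHS yields, respectively, $4\pi\sum_{k}|k||A_{k}|^{2}\cosh(2|k|h)$, $2\pi\sum_{k}|k||A_{k}|^{2}$, and — via $\psi_{t}(\alpha\pm ih)=\sum_{k}A_{k}'(t)e^{ik\alpha\pm kh}$ and $2\re(A_{k}'\overline{A_{k}})=\tfrac{d}{dt}|A_{k}|^{2}$ — exactly $4\pi\sum_{k}\tfrac{d}{dt}|A_{k}|^{2}\cosh(2|k|h)$.

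The two $\tfrac{d}{dt}|A_{k}|^{2}$ pieces cancel identically between the two sides, and the claim collapses to
\begin{equation*}
4\pi h'(t)\sum_{k}|k||A_{k}|^{2}\bigl[\,2\sinh(2|k|h)-\tfrac{1}{10}\cosh(2|k|h)+5\,\bigr]\le 0.
\end{equation*}
Since $h'(t)\le 0$ and $|k||A_{k}|^{2}\ge 0$, it suffices to verify the pointwise scalar inequality $g(x):=2\sinh x-\tfrac{1}{10}\cosh x+5\ge 0$ for $x\ge 0$. Writing $g(x)=\tfrac{1}{20}(19e^{x}-21e^{-x})+5$, one has $g(0)=49/10$ and $g'(x)=2\cosh x-\tfrac{1}{10}\sinh x>0$ on $[0,\infty)$, so $g\ge 49/10>0$. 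There is no substantive obstacle in this argument; the only care required is to keep the sign of $h'(t)$ straight and to notice that the particular constants $1/10$ and $10$ appearing in the statement are tuned precisely so that $g(0)>0$, which is what buys the room to absorb a future high-order term against $-h'(t)\norm{\Lambda^{1/2}\psi}^{2}$ when this lemma is applied.
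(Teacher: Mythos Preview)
Your proof is correct. The paper itself does not prove this lemma; it merely records that it is a corollary of Lemma~4.2 in \cite{raytay}. Your direct Parseval computation --- reducing both sides to Fourier series in $|A_{k}(t)|^{2}$, cancelling the $\tfrac{d}{dt}|A_{k}|^{2}$ contributions identically, and then checking the scalar inequality $2\sinh x-\tfrac{1}{10}\cosh x+5\ge 0$ for $x\ge 0$ --- is exactly the natural route and is self-contained.
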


This lemma is a corollary of the lemma $4.2$ in \cite{raytay} and it allows us to prove the Theorem \ref{decaimiento}.
\begin{proof}[Proof of Theorem \ref{decaimiento}]

The norms $\norm{z}_{\s}$ and $\norm{z}_{H^{k}(S)}$ are defined as before using the new strip $S(t)$ defined by
\begin{displaymath}
S(t)=\{\alpha+i\varsigma\in\C:\abs{\varsigma}<h(t)\}
\end{displaymath}
where $h(t)$ is a positive decreasing function of $t$.

	We use the Galerkin approximation of equation (\ref{zt}), i.e.
	\begin{displaymath}
	z^{[N]}_{t}(\gamma,t)=\Pi_{N}[J[z^{[N]}]](\gamma,t)
	\end{displaymath}
where $\gamma\in\overline{S(t)}$, $\Pi_{N}$ will be defined below, and
\begin{displaymath}
J[z](\alpha,t)=BR(z,\varpi)(\alpha,t)+c(\alpha,t)\partial_{\alpha}z(\alpha,t).
\end{displaymath}

We impose the initial condition
\begin{displaymath}
z^{[N]}(\alpha,0)=z^{[N]}(\alpha).
\end{displaymath}

Here, for a large enough positive integer $N$, we define $z^{[N]}(\alpha,0)$ from $z^{0}(\alpha)$ by using the projection
\begin{displaymath}
\Pi_{N}:\sum_{-\infty}^{\infty}A_{k}e^{ik\alpha}\to\sum_{-N}^{N}A_{k}e^{ik\alpha}.
\end{displaymath}

We defined $z^{[N]}(\alpha)$ by stipulating that
\begin{displaymath}
\z^{[N]}(\alpha)-\alpha=\Pi_{N}[\z^{0}(\alpha)-\alpha]
\end{displaymath}
and
\begin{displaymath}
\zz^{[N]}(\alpha)=\Pi_{N}[\zz^{0}(\alpha)].
\end{displaymath}

For $N$ large enough, the functions $z^{[N]}(\alpha,0)$ satisfy the arc-chord and Rayleigh-Taylor conditions.

We shall consider the evolution of the most singular quantity
$$\sum_{\pm}\int_{\T}\abs{\de^{\lbrack N\rbrack}(\alpha\pm ih_{N}(t),t)}^{2}d\alpha$$
where $h_{N}(t)$ is a smooth positive decreasing function on t, with $h_{N}(0)=h(0)$, which will be given below. Also we denote
$$S_{N}(t)=\{\alpha+i\varsigma\in\C:\abs{\varsigma}<h_{N}(t)\}.$$

%Taking the derivative with respect to t yields
%\begin{align*}
%&\frac{d}{dt}\int_{\T}\abs{\de_{j}(\alpha\pm ih(t))}^{2}d\alpha\\
%&=2\re\int_{\T}\overline{\de_{j}(\alpha\pm ih(t))}\{\partial_{t}\de_{j}(\alpha\pm ih(t))+ih'(t)\dee_{j}(\alpha\pm ih(t))\}\\
%&=2\re\int_{\T}\overline{\de_{j}(\alpha\pm ih(t))}\{\partial_{\alpha}^{4}\Pi_{N}[J_{j}[z]](\alpha\pm ih(t))+ih'(t)\dee_{j}(\alpha\pm ih(t))\}\\
%&=2\re\int_{\T}\overline{\de_{j}(\alpha\pm ih(t))}\{\Pi_{N}[\partial_{\alpha}^{4}J_{j}[z]](\alpha\pm ih(t))+ih'(t)\dee_{j}(\alpha\pm ih(t))\}\\
%&=2\re\int_{\T}\overline{\de_{j}(\alpha\pm ih(t))}\{\partial_{\alpha}^{4}J_{j}[z](\alpha\pm ih(t))+ih'(t)\dee_{j}(\alpha\pm ih(t))\}
%\end{align*}
We will drop the dependency on $N$ from $z^{\lbrack N\rbrack}$ and $h_{N}(t)$ in our notation.
Using lemma above,
\begin{align*}
&\frac{d}{dt}\sum_{\pm}\int_{\T}\abs{\de_{j}(\alpha\pm ih(t))}^{2}d\alpha\le\frac{h'(t)}{10}\sum_{\pm}\int_{\T}\Lambda(\de_{j})(\alpha\pm ih(t))\overline{\de_{j}(\alpha\pm ih(t))}\\
&-10h'(t)\int_{\T}\Lambda(\de_{j})(\alpha)\overline{\de_{j}(\alpha)}d\alpha+2\sum_{\pm}\re\int_{\T}\Pi_{N}[\partial_{\alpha}^{4}J_{j}[z]](\alpha\pm ih(t))\overline{\de_{j}(\alpha\pm ih(t))}.\\
\end{align*} 
Since $\de_{j}(\alpha\pm ih(t))$ is a trigonometric polynomial in the range of $\Pi_{N}$
\begin{align*}
&2\sum_{\pm}\re\int_{\T}\Pi_{N}[\partial_{\alpha}^{4}J_{j}[z]](\alpha\pm ih(t))\overline{\de_{j}(\alpha\pm ih(t))}\\
&=2\sum_{\pm}\re\int_{\T}\partial_{\alpha}^{4}J_{j}[z](\alpha\pm ih(t))\Pi_{N}[\overline{\de_{j}(\alpha\pm ih(t))]}\\
&=2\sum_{\pm}\re\int_{\T}\partial_{\alpha}^{4}J_{j}[z](\alpha\pm ih(t))\overline{\de_{j}(\alpha\pm ih(t))}
\end{align*}
then,
\begin{align*}
&\frac{d}{dt}\sum_{\pm}\int_{\T}\abs{\de_{j}(\alpha\pm ih(t))}^{2}d\alpha\le\frac{h'(t)}{10}\sum_{\pm}\int_{\T}\Lambda(\de_{j})(\alpha\pm ih(t))\overline{\de_{j}(\alpha\pm ih(t))}\\
&-10h'(t)\int_{\T}\Lambda(\de_{j})(\alpha)\overline{\de_{j}(\alpha)}d\alpha+2\sum_{\pm}\re\int_{\T}\partial^{4}_{\alpha}BR(z_{j},\varpi)(\alpha\pm ih(t))\overline{\de_{j}(\alpha\pm ih(t))}\\
&+2\sum_{\pm}\re\int_{\T}\partial^{4}_{\alpha}(c(\alpha\pm ih(t))\partial_{\alpha}z_{j}(\alpha\pm ih(t)))\overline{\de_{j}(\alpha\pm ih(t))}\\
&\equiv M_{1}+M_{2}+M_{3}+M_{4}.
\end{align*} 
To estimate $M_{3}$ and $M_{4}$ we have to repeat the arguments in sections \ref{esti}, with the exception of the term $R_{20}+P_{7}$.

Following the same way, we will get that
\begin{align*}
M_{3}\le&\est{4}+C\norm{\im(\frac{\varpi}{A(t)})}_{H^{2}(S)}\norm{\Lambda^{\frac{1}{2}}\de}^{2}_{\s}\\
&-2\re\int_{\T}\frac{\sigma(\gamma)}{A(t)}\de_{j}(\gamma)\cdot\Lambda(\overline{\de_{j}})(\gamma)d\alpha
\end{align*}
where $\gamma=\alpha\pm ih(t)$.

In order to avoid problems we write,
\begin{displaymath}
\sigma(\gamma)=\sigma(\alpha)+h(t)g_{\pm}(\alpha)
\end{displaymath}
where $g_{\pm}=\frac{1}{h(t)}(\sigma(\gamma)-\sigma(\alpha)).$

Since $$\sigma(\alpha)=\frac{\mu^{2}}{\kappa}BR(z,\varpi)(\alpha)\cdot\partial^{\bot}_{\alpha}z(\alpha)+g\rho^{2}\partial_{\alpha}z_{1}(\alpha),$$
we can write,
\begin{displaymath}
g_{\pm}=\pm \frac{i\mu^{2}}{\kappa}\int _
{0}^{1}\partial_{\alpha}(BR(z,\varpi)\cdot\partial^{\bot}_{\alpha}z)(\gamma t+(t-1)\alpha)dt\pm ig\rho^{2}\int _
{0}^{1}\partial^{2}_{\alpha}z_{1}(\gamma t+(t-1)\alpha)dt
\end{displaymath}

then
\begin{displaymath}
\norm{g_{\pm}}_{H^{2}(S)}\le\est{4}.
\end{displaymath}
Thus, we get
\begin{align*}
&-2\re\int_{\T}\frac{\sigma(\gamma)}{A(t)}\de_{j}(\gamma)\Lambda(\overline{\de_{j}})(\gamma)d\alpha=-2\re\int_{\T}\frac{\sigma(\alpha)}{A(t)}\de_{j}(\gamma)\Lambda(\overline{\de_{j}})(\gamma)d\alpha\\
&-2h(t)\re\int_{\T}\frac{g_{\pm}(\alpha)}{A(t)}\de_{j}(\gamma)\Lambda(\overline{\de_{j}})(\gamma)d\alpha\equiv M_{3}^{1}+M_{3}^{2}.
\end{align*}
On the one hand, since $\re(\frac{\sigma}{A(t)})>0$ and $2g\Lambda(g)-\Lambda(g^{2})\ge 0$
%\begin{align*}
%M_{3}^{1}&=2\int_{\T}\im(\frac{\sigma}{A(t)})(-\re(\de_{j})\cdot\im(\Lambda(\de_{j}))+\im(\de_{j})\cdot\re(\Lambda(\de_{j})))d\alpha\\
%&-2\int_{\T}\re(\frac{\sigma}{A(t)})(\re(\de_{j})\cdot\re(\Lambda(\de_{j}))+\im(\de_{j})\cdot\im(\Lambda(\de_{j})))d\alpha\\
%&\equiv N_{1}+N_{2}
%\end{align*}
%where,
%\begin{align*}
%N_{1}&=2\int_{\T}(-\Lambda(\im(\frac{\sigma}{A(t)})\re(\de_{j}))+\im(\frac{\sigma}{A(t)})\re(\Lambda(\de_{j})))\cdot\im(\de_{j})d\alpha\\
%&\le C\norm{\frac{\sigma}{A(t)}}_{\mathcal{C}^{1,\delta}(S)}\norm{\de}^{2}_{\s}\le\est{4}
%\end{align*}
%and
%\begin{align*}
%N_{2}\le\norm{\Lambda(\frac{\sigma}{A(t)})}_{L^{\infty}(S)}\norm{\de}^{2}_{\s}\le\est{4}
%\end{align*}
\begin{align*}
M_{3}^{1}&=-2\int_{\T}\re(\frac{\sigma}{A(t)})(\re(\de_{j})\re(\Lambda(\de_{j}))+\im(\de_{j})\im(\Lambda(\de_{j})))d\alpha\\
&\le\norm{\Lambda(\frac{\sigma}{A(t)})}_{L^{\infty}(S)}\norm{\de}^{2}_{\s}\le\est{4}.
\end{align*}
On the other hand, like in the term $N_{5}$ in section \ref{estbr}
\begin{align*}
M_{3}^{2}&=-2h(t)\re\int_{\T}\Lambda^{\frac{1}{2}}(\frac{g_{\pm}(\alpha)}{A(t)}\de_{j}(\gamma))\Lambda^{\frac{1}{2}}(\overline{\de_{j}})(\gamma)d\alpha\\
&\le Ch(t)\norm{\frac{g_{\pm}}{A(t)}}_{H^{2}(S)}(\norm{\de}_{\s}+\norm{\Lambda^{\frac{1}{2}}\de}_{\s})\norm{\Lambda^{\frac{1}{2}}\de}_{\s}\\
&\le\est{4}+Ch(t)\est{4}\norm{\Lambda^{\frac{1}{2}}\de}^{2}_{\s}.
\end{align*}

For $M_{1}$,
\begin{align*}
&M_{1}\le\frac{h'(t)}{10}\norm{\Lambda^{\frac{1}{2}}\de}^{2}_{\s}
\end{align*}
and $M_{4}$,
\begin{align*}
M_{4}&\le\est{4}\\
&+C(\norm{\im(\partial_{\alpha}z)\re(\partial_{\alpha}z\frac{\varpi}{A^{2}(t)})}_{H^{2}(S)}+\norm{\im(\partial_{\alpha}z)\im(\partial_{\alpha}z\frac{\varpi}{A^{2}(t)})}_{H^{2}(S)}+\norm{\im(c)}_{H^{2}(S)})\norm{\Lambda^{\frac{1}{2}}\de}^{2}_{\s}\\
&\le\est{4}\norm{\Lambda^{\frac{1}{2}}\de}^{2}_{\s}.
\end{align*}
Then,
\begin{align*}
&\frac{d}{dt}\sum_{\pm}\int_{\T}\abs{\de_{j}(\alpha\pm ih(t))}^{2}d\alpha\le\est{4}\\
&-10h'(t)\int_{\T}\Lambda(\de_{j})(\alpha)\overline{\de_{j}(\alpha)}d\alpha\\
&+(\est{4}h(t)+\frac{h'(t)}{10}+\est{4})\norm{\Lambda^{\frac{1}{2}}\de}^{2}_{\s}.
\end{align*}

Choosing,
\begin{align*}
h(t)=&\exp(-10\int_{0}^{t}G(r)dr)[\int_{0}^{t}-10G(r)\exp(10\int^{r}_{0}G(s)ds)dr+h(0)]
\end{align*}
where $G(t)=\est{4}(t)$, we eliminate the most dangerous term. The other term in the expression above,
\begin{displaymath}
\int_{\T}\Lambda(\de_{j})(\alpha)\de_{j}(\alpha)d\alpha\le\frac{C}{h(t)}\sum_{\pm}\int_{\T}\abs{\de_{j}}^{2}d\alpha
\end{displaymath}
as one sees by examining the Fourier expansion of $\de_{j}(\alpha,t)$.
Thus,
\begin{align*}
&\abs{-10h'(t)\int_{\T}\Lambda(\de_{j})(\alpha)\de_{j}(\alpha)d\alpha}\le C\frac{\abs{h'(t)}}{h(t)}(\norm{z}^{2}_{H^{4}(S)}+\norm{\F}^{2}_{L^{\infty}(S)})\\
&\le\est{4}.
\end{align*}
And we obtain finally,
\begin{displaymath}
\frac{d}{dt}\sum_{\pm}\int_{\T}\abs{\de(\alpha\pm ih(t))}^{2}d\alpha\le\est{4}
\end{displaymath}

Recovering the dependency on $N$ in our notation we have that
\begin{equation}
\label{eqn}
\frac{d}{dt}\sum_{\pm}\int_{\T}\abs{\de^{\lbrack N\rbrack}(\alpha\pm ih(t))}^{2}d\alpha\le\exp C(\norm{\mathcal{F}(z^{\lbrack N\rbrack})}^{2}_{L^{\infty}(S_{N})}+\norm{z^{\lbrack N\rbrack}}^{2}_{L^{2}(S_{N})})
\end{equation}

This estimate is true wherever $t\in\lbrack 0,T_{N}\rbrack$, where $T_{N}$ is the maximal time of existence of the solution $z^{\lbrack N\rbrack}$. In addition inequality (\ref{eqn}) shows that we can extend these solutions in $H^{4}(S)$ up to a small enough time $T$ independent of $N$ and dependent on the initial data.
\end{proof}
%%%%%%%%%%%%%%%%%%%%%%%%%%%%%%%%%%%%%%%%%%%%%%%%%%%%%%%%%%%%%%%%%%%%%%%%%%%%%%%%%%%%%%%%%%%%%%%%%%%%%%%%%%%%%%%
\section{Non-splat singularity}
\label{nosplat}

	As we have said in the introduction, it is necessary to consider a transformed Muskat problem and we need to prove instant analyticity and decay estimates in $\tilde{\Omega}$. We will prove that the energy estimates of the Theorems \ref{thinst} and \ref{decaimiento} holds in $\tilde{\Omega}$ for solutions $\tilde{z}$ of equations:
\begin{equation}
\label{ztil}
\tilde{z}_{t}(\alpha,t)=Q^{2}(\alpha,t)BR(\tilde{z},\tilde{\varpi})(\alpha,t)+\tilde{c}(\alpha)\partial_{\alpha}\tilde{z}(\alpha,t)
\end{equation}
where
\begin{equation}
\label{q}
Q^{2}(\alpha,t)=\abs{\frac{dP}{dw}(z(\alpha,t))}^{2}=\abs{\frac{dP}{dw}(P^{-1}(\tilde{z}(\alpha,t)))}^{2},
\end{equation}
\begin{equation}
\label{wtil}
\tilde{\varpi}(\alpha,t)=-2BR(\tilde{z},\tilde{\varpi})(\alpha,t)\cdot\partial_{\alpha}\tilde{z}(\alpha,t)-2\frac{\rho^{2}}{\mu^{2}}\partial_{\alpha}(P_{2}^{-1}(\tilde{z}(\alpha,t)))
\end{equation}
and
\begin{align}
\label{ctil}
&\tilde{c}(\alpha,t)=\frac{\alpha+\pi}{2\pi}\int_{\T}{\frac{\partial_{\beta}\tilde{z}(\beta,t)}{\abs{\partial_{\beta}\tilde{z}(\beta,t)}^{2}}\cdot\partial_{\beta}BR(\tilde{z},\tilde{\varpi})(\beta,t)d\beta}\nonumber\\
&-\int_{-\pi}^{\alpha}\frac{\partial_{\beta}\tilde{z}(\beta,t)}{\abs{\partial_{\beta}\tilde{z}(\beta,t)}^{2}}\cdot\partial_{\beta}BR(\tilde{z},\tilde{\varpi})(\beta,t)d\beta
\end{align}

	with  $\tilde{z}\in\mathcal{C}([0,T],H^{k})$ for $k\ge 4$, 
	
\subsection{Instant analyticity in $\tilde{\Omega}$ domain}
	
	We define
	
	\begin{align*}
	&q^{0}=(0,0),\quad q^{1}=(\frac{1}{\sqrt{2}},\frac{1}{\sqrt{2}}),\quad q^{2}=(-\frac{1}{\sqrt{2}},\frac{1}{\sqrt{2}}),\\
	&q^{3}=(-\frac{1}{\sqrt{2}},-\frac{1}{\sqrt{2}}),\quad q^{4}=(\frac{1}{\sqrt{2}},-\frac{1}{\sqrt{2}})
	\end{align*}

	which are the singular points of the $P^{-1}$ conformal map. We set $z(\alpha,t)$ to hold $\tilde{z}(\alpha,t)\neq q^{l}$ for $l=0,1,2,3,4$. In order to get this we fix $\overline{\Omega(0)}$ so that $\frac{dP}{dw}(w)\neq 0$ for any $w\in\overline{\Omega(0)}$ without loss of generality.
	
	We define the energy
	\begin{displaymath}
	\norm{\tilde{z}}_{RT}\equiv\norm{\tilde{z}}^{2}_{H^{k}(S)}+\norm{\mathcal{F}(\tilde{z})}^{2}_{L^{\infty}(S)}+\frac{1}{m(Q^{2}\tilde{\sigma})(t)-2\lambda-\norm{g}(t)}+\sum_{l=0}^{4}\frac{1}{m(q^{l})(t)}
	\end{displaymath}
	where
	\begin{align*}
\norm{g}(t)=&C(\norm{\im(\partial_{\alpha}\tilde{z})\re(\partial_{\alpha}\tilde{z}\frac{\tilde{\varpi}Q^{2}}{A^{2}(t)})}_{H^{2}(S)}+\norm{\im(\partial_{\alpha}\tilde{z})\im(\partial_{\alpha}\tilde{z}\frac{\tilde{\varpi}Q^{2}}{A^{2}(t)})}_{H^{2}(S)}\\
&+\norm{\im(\frac{\tilde{\varpi}Q^{2}}{A(t)})}_{H^{2}(S)}+\norm{\im(\tilde{c})}_{H^{2}(S)})
\end{align*}
	and
	\begin{displaymath}
	m(Q^{2}\tilde{\sigma})(t)=\min_{\alpha}Q^{2}(\alpha,t)\sigma(\alpha,t),\quad m(q^{l})(t)=\min_{\alpha}\abs{\tilde{z}(\alpha,t)-q^{l}}.
	\end{displaymath}
	\begin{thm}
	\label{prop}
	Let $\tilde{z}(\alpha,t)$ be a solution of (\ref{ztil}-\ref{ctil}). Then, the following estimate holds:
	\begin{displaymath}
	\frac{d}{dt}\norm{\tilde{z}}_{RT}\le\exp C(\norm{\tilde{z}}_{RT})
	\end{displaymath}
	for C constant.
	\end{thm}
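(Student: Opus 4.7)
The plan is to mimic, term by term, the energy argument of Sections \ref{esti}, \ref{min}, \ref{inst} and \ref{decai}, incorporating the modifications forced by the conformal change of variables $P$. The quantity $\|\tilde z\|_{RT}$ is built from four ingredients, and I will produce a differential inequality for each one, all bounded by $\exp C(\|\tilde z\|_{RT})$; summing gives the theorem. The main new feature, compared to the untransformed problem, is the prefactor $Q^2$ in front of the Birkhoff--Rott term in (\ref{ztil}) together with the fact that $Q^2$ and the inverse map $P^{-1}$ may degenerate at the points $q^l$, $l=0,\dots,4$. Consequently one must track $\sum_l 1/m(q^l)(t)$ alongside the usual energies.

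First I would handle $\frac{d}{dt}\|\tilde z\|^2_{H^k(S)}$. After differentiating $k=4$ times and plugging in (\ref{ztil}), the highest-order part splits again as a $\partial_\alpha^4 BR$ piece and a $\partial_\alpha^4(\tilde c\,\partial_\alpha \tilde z)$ piece, exactly as in Subsections \ref{estbr}--\ref{cest}, but now with an extra $Q^2(\alpha,t)$ multiplying the BR term. Since $Q^2$ is a smooth function of $\tilde z$ away from the $q^l$, Leibniz expansion distributes derivatives of $Q^2$ that are harmless for order $\le 3$; the principal-part term is $Q^2(\gamma)\partial_\alpha^4 BR(\tilde z,\tilde\varpi)$, and in the Rayleigh--Taylor extraction (the analogue of $K_9\to P_7+P_8$) one factors $Q^2$ out of the nonlocal operator $\Lambda$ at the cost of a commutator $[\Lambda,Q^2]$ controlled in $\s$ by $\|Q^2\|_{\mathcal{C}^{1,\delta}(S)}$. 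In parallel, the amplitude equation (\ref{wtil}) replaces $-2\kappa g\rho^2/\mu^2\,\partial_\alpha z_2$ by $-2\rho^2/\mu^2\,\partial_\alpha P_2^{-1}(\tilde z)$, so the Rayleigh--Taylor-like quantity that appears after the cancellation in Subsection \ref{look} is precisely $Q^2\tilde\sigma$ with $\tilde\sigma$ as in (\ref{rttild}). This yields the same structural inequality (\ref{energy}):
\begin{align*}
\frac{1}{2}\frac{d}{dt}\int_\T\abs{\partial_\alpha^4 \tilde z(\gamma)}^2 d\alpha
 \le \esttil{4} + C(\|g\|(t)+2\lambda - m(Q^2\tilde\sigma)(t))\,\|\Lambda^{1/2}\partial_\alpha^4\tilde z\|^2_{\s},
\end{align*}
the $\|g\|(t)$ absorbing the high-norm terms produced by the imaginary parts of $\tilde\varpi Q^2/A$, $\partial_\alpha\tilde z$ and $\tilde c$. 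As in Theorem \ref{thinst}, as long as the coefficient of $\|\Lambda^{1/2}\partial_\alpha^4\tilde z\|^2_{\s}$ is negative the bound becomes $\esttil{4}$.

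Second, I would treat the evolution of $m(Q^2\tilde\sigma)(t)$. Since $Q^2\tilde\sigma = \mu^2/\kappa\,Q^2 BR(\tilde z,\tilde\varpi)\cdot\partial_\alpha^\perp \tilde z + \rho^2 g\, Q^2 \nabla P_2^{-1}(\tilde z)\cdot\partial_\alpha^\perp\tilde z$, differentiating in $t$ at a minimizer and using (\ref{ztil}) produces terms of the same type encountered in Section \ref{min}, plus terms in which $\partial_t Q^2$ and $\partial_t \nabla P_2^{-1}(\tilde z)$ appear. Both are of the form (smooth function of $\tilde z$)$\cdot\,\tilde z_t$, hence bounded in $L^\infty(S)$ by $\esttil{4}$ provided $\tilde z$ stays away from the $q^l$ — which is exactly what the terms $1/m(q^l)(t)$ in $\|\tilde z\|_{RT}$ record. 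This gives
\[
\frac{d}{dt}\frac{1}{m(Q^2\tilde\sigma)(t)-2\lambda-\|g\|(t)}\le \frac{\esttil{4}}{(m(Q^2\tilde\sigma)(t)-2\lambda-\|g\|(t))^2}\le\exp C(\|\tilde z\|_{RT}).
\]

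Third, for each singular point $q^l$, at a minimizer $\alpha_t$ of $|\tilde z(\cdot,t)-q^l|$ one has
\[
\frac{d}{dt}m(q^l)(t) = \frac{(\tilde z(\alpha_t,t)-q^l)\cdot\tilde z_t(\alpha_t,t)}{|\tilde z(\alpha_t,t)-q^l|}\ge -\|\tilde z_t\|_{L^\infty(S)},
\]
and since $\|\tilde z_t\|_{L^\infty(S)}\le \|Q^2\|_{L^\infty(S)}\|BR(\tilde z,\tilde\varpi)\|_{L^\infty(S)} + \|\tilde c\|_{L^\infty(S)}\|\partial_\alpha\tilde z\|_{L^\infty(S)}\le\esttil{4}$, one obtains $\frac{d}{dt}(1/m(q^l))\le (1/m(q^l))^2 \esttil{4}$, again absorbed by $\exp C(\|\tilde z\|_{RT})$. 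The arc-chord piece $\|\mathcal{F}(\tilde z)\|_{L^\infty(S)}^2$ is estimated exactly as in Section $8$ of \cite{hele}; the Galerkin/regularisation scheme of Theorems \ref{thinst}--\ref{decaimiento} then closes the argument.

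The hard part, in my view, is not any single one of these differential inequalities but the bookkeeping of the $Q^2$ factor through Subsection \ref{look}: one must verify that the cancellation between $P_7$ and $Q_{10}$ that produced $-m(t)\|\Lambda^{1/2}\partial_\alpha^4 z\|^2_{\s}$ in the original problem now produces $-m(Q^2\tilde\sigma)(t)\|\Lambda^{1/2}\partial_\alpha^4\tilde z\|^2_{\s}$ after commuting $Q^2$ with $\Lambda$ and using $\nabla P_2^{-1}$ in place of $(0,1)$. Everything else is a bounded multiplicative perturbation of the arguments already carried out, controlled quantitatively by $\|\tilde z\|_{RT}$ thanks to the inclusion of $\sum_l 1/m(q^l)(t)$, which prevents $Q^2$ and its derivatives from blowing up.
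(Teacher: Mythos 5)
Your proposal is correct and follows essentially the same route as the paper's own proof: you repeat the Section \ref{esti} energy argument with the $Q^2$ prefactor tracked through the Rayleigh--Taylor extraction so that $Q^2\tilde\sigma$ emerges as the damping coefficient, control the evolution of $m(Q^2\tilde\sigma)$ as in Section \ref{min}, and absorb the singularities of $P^{-1}$ via the $\sum_l 1/m(q^l)$ terms bounded through $\|\tilde z_t\|_{L^\infty(S)}$. The only cosmetic difference is that you factor $Q^2$ out of $\Lambda$ with an explicit commutator, whereas the paper keeps $Q^2$ inside and moves $\Lambda$ onto the full product $\overline{\detil}\cdot\partial^\perp_\alpha\tilde z\,Q^2/|\partial_\alpha\tilde z|^2$; these are equivalent.
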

	\begin{nota}
	We will show the proof for $k=4$, being the rest of the cases analogous.
	\end{nota}
\begin{proof}
 We have to estimate 
 \begin{displaymath}
 \frac{d}{dt}\norm{\detil}^{2}_{\s}.
 \end{displaymath}
 We quote \cite{finite} for dealing with the $Q^{2}$ term. This factor do not introduce a high order term 
 \begin{displaymath}
 \norm{Q^{2}}_{H^{k}(S)}\le\exp C(\norm{\tilde{z}}_{RT}).
 \end{displaymath}
 
Then we have to repeat all estimates in section \ref{esti}, in which $Q^{2}$ is involved. We will show below how to deal with them.

We find 
\begin{align*}
&\frac{d}{dt}\norm{\detil}^{2}_{\s}\le\esttil{4}+2\lambda\norm{\Lambda^{\frac{1}{2}}\detil{}}^{2}_{\s}\\
&+J_{1}+J_{2}
\end{align*}
where
\begin{align*}
&J_{1}=\re{\int_{\T}\overline{\detil(\gamma)}}\cdot\partial_{\alpha}^{4}(Q^{2}(\gamma)BR(\tilde{z},\tilde{\varpi})(\gamma))d\alpha,\\
&J_{2}=\re{\int_{\T}\overline{\detil(\gamma)}}\cdot\partial^{4}_{\alpha}(\tilde{c}(\gamma)\partial_{\alpha}\tilde{z}(\gamma))d\alpha.
\end{align*}

We get $J_{1}\le\esttil{4}+I_{7}$ where
\begin{displaymath}
I_{7}=\re\int_{\T}\overline{\detil(\gamma)}\cdot Q^{2}(\gamma)\partial^{4}_{\alpha}BR(\tilde{z},\tilde{\varpi})(\gamma)d\alpha.
\end{displaymath}
As in \ref{estbr} we split $I_{7}=\tilde{I_{3}}+\tilde{I_{4}}+\tilde{I_{5}}+\tilde{I_{6}}+\tilde{I_{7}}$
in the same way we have
\begin{displaymath}
\tilde{I_{3}}+\tilde{I_{4}}+\tilde{I_{5}}+\tilde{I_{6}}\le\esttil{4}+C\norm{\im(\frac{\tilde{\varpi}}{A(t)}Q^{2})}_{H^{2}(S)}\norm{\Lambda^{\frac{1}{2}}\detil{}}^{2}_{\s}
\end{displaymath}
and $\tilde{I_{7}}\le\esttil{4}+\tilde{K}_{9}$ being
\begin{displaymath}
\tilde{K}_{9}=\frac{1}{2}\re\int_{\T}\overline{\detil(\gamma)}\cdot\frac{\partial^{\bot}_{\alpha}\tilde{z}(\gamma)}{\abs{\partial_{\alpha}\tilde{z}}^{2}}H(\partial^{4}_{\alpha}\tilde{\varpi})(\gamma)Q^{2}(\gamma)d\alpha.
\end{displaymath}
Identity $H(\partial_{\alpha})=\Lambda$ allows us to rewrite $\tilde{K}_{9}$ as follows
\begin{displaymath}
\tilde{K}_{8}=\frac{1}{2}\re\int_{\T}\Lambda(\overline{\detil{}}\cdot\frac{\partial^{\bot}_{\alpha}\tilde{z}}{\abs{\partial_{\alpha}\tilde{z}}^{2}}Q^{2})(\gamma)\partial^{3}_{\alpha}\tilde{\varpi}(\gamma)d\alpha.
\end{displaymath}
Using the formula (\ref{wtil}), we decompose $\tilde{K}_{9}=\tilde{P}_{7}+\tilde{P}_{8}$
\begin{align*}
&\tilde{P}_{7}=-\kappa g\frac{\rho^{2}}{\mu^{2}}\re\int_{\T}\frac{\Lambda(\overline{\detil{}}\cdot Q^{2}\partial^{\bot}_{\alpha}\tilde{z})(\gamma)}{A(t)}\partial^{4}_{\alpha}(P_{2}^{-1}(\tilde{z}(\gamma)))d\alpha\\
&\tilde{P}_{8}=-\frac{1}{2}\re\int_{\T}\frac{\Lambda(\overline{\detil{}}\cdot Q^{2}\partial^{\bot}_{\alpha}\tilde{z})(\gamma)}{A(t)}\partial^{3}_{\alpha}\tilde{T}(\tilde{\varpi})(\gamma)d\alpha
\end{align*}
where $\tilde{T}(\tilde{\varpi})=-2BR(\tilde{z},\tilde{\varpi})\cdot\partial_{\alpha}\tilde{z}.$

The term $\tilde{P}_{8}$ can be estimate as the term $P_{8}$ in subsection \ref{look}. An analogous approach provides
\begin{align}
\label{p8}
&\tilde{P}_{8}\le\esttil{4}\nonumber\\
&-\re\int_{\T}\frac{Q^{2}(\gamma)BR(\tilde{z},\tilde{\varpi})(\gamma)\cdot\partial^{\bot}_{\alpha}\tilde{z}(\gamma)}{A(t)}\detil(\gamma)\cdot\Lambda^{\frac{1}{2}}(\overline{\detil{}})(\gamma)d\alpha.
\end{align}

For $\tilde{P}_{7}$ we consider the most singular terms: $\tilde{P}_{7}\le\esttil{4}+\tilde{P}_{7}^{1}$ where
\begin{displaymath}
\tilde{P}_{7}^{1}=-\kappa g\frac{\rho^{2}}{\mu^{2}}\re\int_{\T}\frac{\Lambda(\overline{\detil{}}\cdot Q^{2}\partial^{\bot}_{\alpha}\tilde{z})(\gamma)}{A(t)}\nabla P_{2}^{-1}(\tilde{z}(\gamma))\cdot\detil{(\gamma)}d\alpha.
\end{displaymath}

Then we split $\tilde{P}_{7}^{1}=\tilde{P}_{7}^{11}+\tilde{P}_{7}^{12}+\tilde{P}_{7}^{13}+\tilde{P}_{7}^{14}$ by writing the component of the curve:
\begin{align*}
&\tilde{P}_{7}^{11}=\kappa g\frac{\rho^{2}}{\mu^{2}}\re\int_{\T}\frac{\Lambda(\overline{\detil_{1}} Q^{2}\partial_{\alpha}\tilde{\zz})(\gamma)}{A(t)}\partial_{\tilde{x_{1}}} P_{2}^{-1}(\tilde{z}(\gamma))\detil_{1}{(\gamma)}d\alpha,\\
&\tilde{P}_{7}^{12}=\kappa g\frac{\rho^{2}}{\mu^{2}}\re\int_{\T}\frac{\Lambda(\overline{\detil_{1}} Q^{2}\partial_{\alpha}\tilde{\zz})(\gamma)}{A(t)}\partial_{\tilde{x_{2}}} P_{2}^{-1}(\tilde{z}(\gamma))\detil_{2}{(\gamma)}d\alpha,\\
&\tilde{P}_{7}^{13}=-\kappa g\frac{\rho^{2}}{\mu^{2}}\re\int_{\T}\frac{\Lambda(\overline{\detil_{2}} Q^{2}\partial_{\alpha}\tilde{\z})(\gamma)}{A(t)}\partial_{\tilde{x_{1}}} P_{2}^{-1}(\tilde{z}(\gamma))\detil_{1}{(\gamma)}d\alpha,\\
&\tilde{P}_{7}^{14}=-\kappa g\frac{\rho^{2}}{\mu^{2}}\re\int_{\T}\frac{\Lambda(\overline{\detil_{2}} Q^{2}\partial_{\alpha}\tilde{\z})(\gamma)}{A(t)}\partial_{\tilde{x_{2}}} P_{2}^{-1}(\tilde{z}(\gamma))\detil_{2}{(\gamma)}d\alpha.\\
\end{align*}
The commutator estimate yields
\begin{align}
\label{p1}
&\tilde{P}_{7}^{11}\le\esttil{4}\nonumber\\
&+\kappa g\frac{\rho^{2}}{\mu^{2}}\re\int_{\T}\frac{Q^{2}(\gamma)\partial_{\tilde{x_{1}}} P_{2}^{-1}(\tilde{z}(\gamma))}{A(t)}\partial_{\alpha}\tilde{\zz}(\gamma)\detil_{1}{(\gamma)}\Lambda(\overline{\detil_{1}})(\gamma)d\alpha,\\
&\tilde{P}_{7}^{12}\le\esttil{4}\nonumber\\
&+\kappa g\frac{\rho^{2}}{\mu^{2}}\re\int_{\T}\frac{Q^{2}(\gamma)\partial_{\tilde{x_{2}}} P_{2}^{-1}(\tilde{z}(\gamma))}{A(t)}\partial_{\alpha}\tilde{\zz}(\gamma)\detil_{2}{(\gamma)}\Lambda(\overline{\detil_{1}})(\gamma)d\alpha,\nonumber\\
&\tilde{P}_{7}^{13}\le\esttil{4}\nonumber\\
&-\kappa g\frac{\rho^{2}}{\mu^{2}}\re\int_{\T}\frac{Q^{2}(\gamma)\partial_{\tilde{x_{1}}} P_{2}^{-1}(\tilde{z}(\gamma))}{A(t)}\partial_{\alpha}\tilde{\z}(\gamma)\detil_{1}{(\gamma)}\Lambda(\overline{\detil_{2}})(\gamma)d\alpha,\nonumber\\
&\tilde{P}_{7}^{14}\le\esttil{4}\nonumber\\
\label{p4}
&-\kappa g\frac{\rho^{2}}{\mu^{2}}\re\int_{\T}\frac{Q^{2}(\gamma)\partial_{\tilde{x_{2}}} P_{2}^{-1}(\tilde{z}(\gamma))}{A(t)}\partial_{\alpha}\tilde{\z}(\gamma)\detil_{2}{(\gamma)}\Lambda(\overline{\detil_{2}})(\gamma)d\alpha.
\end{align}

Using that
\begin{displaymath}
\partial_{\alpha}\tilde{\zz}\detil_{2}=-3\partial^{2}_{\alpha}\tilde{z}\cdot\partial^{3}_{\alpha}\tilde{z}-\partial_{\alpha}\tilde{\z}\detil_{1},
\end{displaymath}

we get
\begin{align}
\label{p2}
&\tilde{P}_{7}^{12}\le\esttil{4}\nonumber\\
&-\kappa g\frac{\rho^{2}}{\mu^{2}}\re\int_{\T}\frac{Q^{2}(\gamma)\partial_{\tilde{x_{2}}} P_{2}^{-1}(\tilde{z}(\gamma))}{A(t)}\partial_{\alpha}\tilde{\z}(\gamma)\detil_{1}{(\gamma)}\Lambda(\overline{\detil_{1}})(\gamma)d\alpha,\\
&\tilde{P}_{7}^{13}\le\esttil{4}\nonumber\\
\label{p3}
&+\kappa g\frac{\rho^{2}}{\mu^{2}}\re\int_{\T}\frac{Q^{2}(\gamma)\partial_{\tilde{x_{1}}} P_{2}^{-1}(\tilde{z}(\gamma))}{A(t)}\partial_{\alpha}\tilde{\zz}(\gamma)\detil_{2}{(\gamma)}\Lambda(\overline{\detil_{2}})(\gamma)d\alpha.
\end{align}

Adding the inequalities (\ref{p1}),(\ref{p2}),(\ref{p3}) and (\ref{p4}) it is easy to check 
\begin{align*}
&\tilde{P}_{7}\le\esttil{4}\\
&-\kappa g\frac{\rho^{2}}{\mu^{2}}\re\int_{\T}\frac{Q^{2}(\gamma)\nabla P_{2}^{-1}(\tilde{z}(\gamma))}{A(t)}\cdot\partial^{\bot}_{\alpha}\tilde{z}(\gamma)\detil{(\gamma)}\cdot\Lambda(\detil)(\gamma)d\alpha.
\end{align*}
Above inequality together with (\ref{p8}) let us obtain
\begin{align*}
&\tilde{K}_{9}\le\esttil{4}\\
&-\re\int_{\T}\frac{Q^{2}(\gamma)\tilde{\sigma}(\gamma)}{A(t)}\detil{(\gamma)}\cdot\Lambda(\detil)(\gamma)d\alpha
\end{align*}
with $\tilde{\sigma}$ given in (\ref{rttild}).

Considering $m(Q^{2}\tilde{\sigma})(t)$ and the pointwise inequality $2f\Lambda(f)\ge\Lambda(f^{2})$ we check
\begin{align*}
&\tilde{I}_{7}\le\esttil{4}-m(Q^{2}\tilde{\sigma})(t)\norm{\detil}^{2}_{\s}.
\end{align*}

For $J_{2}$ it is easy to deal with $\partial^{4}_{\alpha}\tilde{c}$ in the same way as in section \ref{cest}. The analogous approach provides
\begin{align*}
&J_{2}\le\esttil{4}\\
&+C(\norm{\im(\partial_{\alpha}\tilde{z})\re(\partial_{\alpha}\tilde{z}\frac{\tilde{\varpi}Q^{2}}{A^{2}(t)})}_{H^{2}(S)}+\norm{\im(\partial_{\alpha}\tilde{z})\im(\partial_{\alpha}\tilde{z}\frac{\tilde{\varpi}Q^{2}}{A^{2}(t)})}_{H^{2}(S)}\\
&+\norm{\im(\tilde{c})}_{H^{2}(S)})\norm{\Lambda^{\frac{1}{2}}\detil}^{2}_{\s}.
\end{align*}

Finally we obtain,
\begin{align*}
&\frac{d}{dt}\norm{\detil}^{2}_{\s}\le\esttil{4}\\
&+(2\lambda+\norm{g}-m(Q^{2}\tilde{\sigma}))\norm{\Lambda^{\frac{1}{2}}\detil}^{2}_{\s}.
\end{align*}

Bearing in mind the singular points of the $P^{-1}$ together with the estimation for $m(Q^{2}\tilde{\sigma})(t)$, which we can obtain in analogous way as in section \ref{min}, we have the desired estimate.
\end{proof}

%\subsection{The evolution of the minimum of $Q^{2}(\alpha,t)\tilde{\sigma}(\alpha,t)$}

%In this esction we get an a priori estimate for the evolution of the minimum of $Q^{2}\tilde{\sigma}$. Recall that

%\begin{displaymath}
%\tilde{\sigma}(\alpha,t)=\frac{\mu^{2}}{\kappa}BR(\tilde{z},\tilde{\varpi})(\alpha,t)\cdot\partial^{\bot}_{\alpha}\tilde{z}(\alpha,t)+\rho^{2}g\nabla P_{2}^{-1}(\tilde{z}(\alpha,t))\cdot\partial^{\bot}_{\alpha}\tilde{z}(\alpha,t)
%\end{displaymath}

%\begin{lem}
%Let $\tilde{z}(\gamma,t)$ be a solution of the system (\ref{ztil}-\ref{ctil}) in the band $S$ with $\tilde{z}\in\mathcal{C}^{1}([0,T],H^{4})$ and
%\begin{displaymath}
%m(Q^{2}\tilde{\sigma})(t)=\min_{\alpha}Q^{2}(\alpha,t)\tilde{\sigma}(\alpha,t)
%\end{displaymath}
%Then 
%\begin{displaymath}
%m(Q^{2}\tilde{\sigma})(t)\ge m(Q^{2}\tilde{\sigma})(0)-\int_{0}^{t}\exp C(\norm{\mathcal{F}(\tilde{z})}^{2}_{L^{\infty}(S)(r)}+\norm{\tilde{z}}^{2}_{H^{4}(S)}(r))dr
%\end{displaymath}
%\end{lem}
	
%\begin{proof}
%we may consider $\gamma_{t}\in\C$ such that
%\begin{displaymath}
%m(Q^{2}\tilde{\sigma})(t)=Q^{2}(\gamma_{t},t)\tilde{\sigma}(\gamma_{t},t)
%\end{displaymath}
%\end{proof}	

\subsection{Decay of the strip of analyticity in the $\tilde{\Omega}$ domain}
\begin{thm}
\label{decaitil}
Let $\tilde{z}(\alpha,0)=\tilde{z}^{0}(\alpha)$ be an analytic curve in the strip
$$S=\{\alpha+i\varsigma\in\C:\abs{\varsigma}<h(0)\},$$
with $h(0)>0$ and satisfying:
\begin{enumerate}
\item[*]The arc-chord condition, $\mathcal{F}(\tilde{z}^{0})(\alpha+i\varsigma,\beta)\in L^{\infty}(S\times\R)$
\item[*]The curve $\tilde{z}^{0}(\alpha)$ is real for real $\alpha$
\item[*]The functions $\tilde{z}_{1}^{0}(\alpha)-\alpha$ and $\tilde{\zz}^{0}(\alpha)$ are periodic with period $2\pi$
\item[*]The functions $\tilde{\z}^{0}(\alpha)-\alpha$ and $\tilde{\zz}^{0}(\alpha)$ belong to $H^{4}(S)$
\end{enumerate}
Then there exist a time T and a solution of the Muskat problem in $\tilde{\Omega}$, $\tilde{z}(\alpha,t)$ defined for $0<t\le T$
that continues analytically into some complex strip for each fixed $t\in\lbrack 0,T\rbrack$. Here $T$ is either
a small constant depending only on $\exp C(\norm{\mathcal{F}(\tilde{z}^{0})}_{L^{\infty}(S)}^{2}+\norm{\tilde{z}^{0}}^{2}_{\s})$.
\end{thm}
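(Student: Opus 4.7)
The plan is to imitate the proof of Theorem \ref{decaimiento}, now working with the transformed system (\ref{ztil})-(\ref{ctil}) in place of (\ref{zt}). I would set up the Galerkin approximation
\begin{displaymath}
\tilde{z}^{[N]}_t(\gamma,t) = \Pi_N\bigl[Q^2 BR(\tilde{z}^{[N]},\tilde{\varpi}^{[N]}) + \tilde{c}\,\partial_\alpha \tilde{z}^{[N]}\bigr](\gamma,t), \qquad \tilde{z}^{[N]}(\alpha,0) = \Pi_N \tilde{z}^0(\alpha),
\end{displaymath}
on the decreasing strip $S_N(t)=\{\alpha+i\varsigma:|\varsigma|<h_N(t)\}$ with $h_N(0)=h(0)$, and control the most singular quantity $\sum_\pm \int_{\T}|\partial^4_\alpha \tilde{z}^{[N]}(\alpha\pm ih_N(t))|^2 d\alpha$. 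Applying Lemma \ref{corol} with $\psi=\partial^4_\alpha \tilde{z}^{[N]}_j$ produces four pieces $M_1,M_2,M_3,M_4$ exactly as in the original proof, with $M_3+M_4$ taking the role of $J_1+J_2$ in Theorem \ref{prop}.

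Next I would invoke the estimates carried out in Theorem \ref{prop} to bound $M_3+M_4$. The only nontrivial point is the Rayleigh-Taylor-type term
\begin{displaymath}
-\re\int_{\T}\frac{Q^2(\gamma)\tilde\sigma(\gamma)}{A(t)}\,\detil(\gamma)\cdot\Lambda(\overline{\detil})(\gamma)\,d\alpha,
\end{displaymath}
which must be handled on the complex strip. Mimicking the decomposition in the proof of Theorem \ref{decaimiento}, I would write $Q^2(\gamma)\tilde\sigma(\gamma)=Q^2(\alpha)\tilde\sigma(\alpha)+h(t)\tilde g_\pm(\alpha)$, where
\begin{displaymath}
\tilde g_\pm(\alpha)= \pm i\int_0^1 \partial_\alpha\bigl(Q^2\tilde\sigma\bigr)(t\gamma + (t-1)\alpha)\,dt
\end{displaymath}
satisfies $\|\tilde g_\pm\|_{H^2(S)}\le\esttil{4}$ by the analyticity of $Q^2$ and $\tilde\sigma$ in the strip. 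The real-line contribution is controlled using $\re(Q^2\tilde\sigma/A(t))\ge m(Q^2\tilde\sigma)(t)>0$ and the pointwise inequality $2f\Lambda f \ge \Lambda f^2$, while the remainder carries a prefactor $h(t)$ that can be absorbed into the choice of $h$. Combining with the estimates for $\tilde I_3,\ldots,\tilde I_6$ and for $J_2$ from Theorem \ref{prop} gives
\begin{displaymath}
\frac{d}{dt}\sum_\pm\int_{\T}|\detil(\alpha\pm ih(t))|^2 d\alpha \le \esttil{4} - 10h'(t)\int_{\T}\Lambda(\detil)\overline{\detil}\,d\alpha + \Bigl(\tfrac{h'(t)}{10}+\esttil{4}h(t)+\norm{g}(t)-m(Q^2\tilde\sigma)(t)\Bigr)\norm{\Lambda^{1/2}\detil}^2_{\s}.
\end{displaymath}

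I would then eliminate the last bracket by choosing $h(t)$ as the solution of the ODE $h'(t)=-10\,G(t)\,h(t)$ where $G(t)=\esttil{4}$ plus the terms coming from $\norm{g}(t)-m(Q^2\tilde\sigma)(t)$, exactly as in Theorem \ref{decaimiento}. The remaining term $-10h'(t)\int_{\T}\Lambda(\detil)\overline{\detil}d\alpha$ is controlled by $C|h'(t)|/h(t)\cdot\norm{\detil}^2_{L^2}$ via Fourier expansion, which folds back into $\esttil{4}$. Together with the $L^2$ estimate for $\tilde{z}$ and the $L^\infty$ estimate for $\mathcal{F}(\tilde{z})$, and using the ODE from Theorem \ref{prop} to control $m(Q^2\tilde\sigma)(t)$ and the distances $m(q^l)(t)$ to the branch singularities of $P^{-1}$, one closes a Grönwall estimate yielding uniform $H^4(S_N)$ bounds on $[0,T]$ for $T$ depending only on $\esttil{4}(0)$.

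The main obstacle will be the control of $Q^2$ on the complex strip: one needs $\tilde{z}(\alpha,t)$ to stay a definite distance away from the five singular points $q^l$ of $P^{-1}$ not just for real $\alpha$ but throughout $S(t)$, so that $Q^2$ remains analytic and the bound $\norm{Q^2}_{H^k(S)}\le \exp C(\norm{\tilde{z}}_{RT})$ quoted from \cite{finite} is legitimate. This is handled by shrinking $h(t)$ whenever $\min_l m(q^l)(t)$ threatens to drop, absorbing the evolution of these distances into the same differential inequality for $h$. Once this uniform control is in place, passing $N\to\infty$ via Arzel\`a-Ascoli in $H^4(S)$ (exactly as in the proof of Theorem \ref{thinst}) yields the desired analytic solution $\tilde{z}(\alpha,t)$ on $[0,T]$.
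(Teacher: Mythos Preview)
Your proposal is correct and follows essentially the same route as the paper: Galerkin approximation, Lemma \ref{corol}, the $H^4(S)$ energy estimates from Theorem \ref{prop}, the decomposition $Q^2(\gamma)\tilde\sigma(\gamma)=Q^2(\alpha)\tilde\sigma(\alpha)+h(t)\tilde g_\pm(\alpha)$, and then the same ODE choice for $h(t)$ as in Theorem \ref{decaimiento}. The only cosmetic difference is that the paper simply bounds the $\norm{g}(t)$--type contributions by $\esttil{4}$ and drops the favorable $-m(Q^2\tilde\sigma)(t)$ term, so its bracket reads $\bigl(\esttil{4}h(t)+\tfrac{h'(t)}{10}+\esttil{4}\bigr)$ and the ODE for $h$ is the inhomogeneous one you cite from Theorem \ref{decaimiento}, not the homogeneous $h'=-10Gh$ you wrote; since you explicitly defer to that formula, this is not a real gap.
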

\begin{proof}
Here we proceed in the same way that in the proof of the Theorem \ref{decaimiento}.

After we use the Galerkin approximation, by Lemma \ref{corol} we get
\begin{align*}
&\frac{d}{dt}\sum_{\pm}\int_{\T}\abs{\detil_{j}(\alpha\pm ih(t))}^{2}d\alpha\le\frac{h'(t)}{10}\sum_{\pm}\int_{\T}\Lambda(\detil_{j})(\alpha\pm ih(t))\overline{\detil_{j}(\alpha\pm ih(t))}\\
&-10h'(t)\int_{\T}\Lambda(\detil_{j})(\alpha)\overline{\detil_{j}(\alpha)}d\alpha+2\sum_{\pm}\re\int_{\T}\partial^{4}_{\alpha}(Q^{2}BR(\tilde{z}_{j},\tilde{\varpi})(\alpha\pm ih(t))\overline{\de_{j}(\alpha\pm ih(t))}\\
&+2\sum_{\pm}\re\int_{\T}\partial^{4}_{\alpha}(\tilde{c}(\alpha\pm ih(t))\partial_{\alpha}\tilde{z}_{j}(\alpha\pm ih(t)))\overline{\detil_{j}(\alpha\pm ih(t))}.
\end{align*} 

We write,
\begin{displaymath}
Q^{2}(\gamma)\tilde{\sigma}(\gamma)=Q^{2}(\alpha)\tilde{\sigma}(\alpha)+h(t)\tilde{g_{\pm}}(\alpha)
\end{displaymath}
and we have
\begin{align*}
&\frac{d}{dt}\sum_{\pm}\int_{\T}\abs{\detil_{j}(\alpha\pm ih(t))}^{2}d\alpha\le\esttil{4}\\
&-10h'(t)\int_{\T}\Lambda(\detil_{j})(\alpha)\overline{\detil_{j}(\alpha)}d\alpha\\
&+(\esttil{4}h(t)+\frac{h'(t)}{10}+\esttil{4})\norm{\Lambda^{\frac{1}{2}}\detil}^{2}_{\s}.
\end{align*}
Choosing,
\begin{align}
\label{ht}
h(t)=&\exp(-10\int_{0}^{t}G(r)dr)[\int_{0}^{t}-10G(r)\exp(10\int^{r}_{0}G(s)ds)dr+h(0)]
\end{align}
where $G(t)=\est{4}(t)$ we get the desired estimation.
\end{proof}
\subsection{Proof of Theorem \ref{main}}

%Suppose that there exists a time $T$ where we have a splat singularity, i.e., the smooth interface collapses along an arc at time $T$.

%Initially the curve is real, by theorem \ref{thinst}, it instantly becomes analytic. From theorem \ref{decaimiento}, our strip of analyticity is nonzero as long as the regularity of the curve and the arc-chord condition not fail.
% In $\Omega$ domain, at splat time $T$, the arc-chord condition blow-up so we can not guarantee analyticity at that time.

%At this point, we transform the system to the tilde domain $\tilde{\Omega}$.

%From theorem \ref{prop}, we can conclude that there exists a solution of the problem (\ref{ztil}-\ref{ctil}) $\tilde{z}(\alpha,t)$ defined for $0<t\le\tilde{T}$ that continue analytically into the strip $S(t)=\{\alpha\pm i\varsigma t:\abs{\varsigma}<\lambda t\}$ for each $t$. And, for theorem \ref{decaitil}, this complex strip decays exponentially to a time that depends on the regularity of the curve and the arc-chord condition.

%If we assume that there exists a time $T\in[0,\tilde{T}]$ such that there has a splat singularity in $\tilde{\Omega}$, since in this domain the arc-chord condition and the regularity of the curve are bounded, the strip of analyticity is nonzero and therefore we have analyticity guarantee at the time $T$.

Let $z_{0}(\alpha)\in H^{4}$, from Theorem \ref{inst} there exists a local solution $z$ that becomes real-analytic in the complex strip S(t).

Suppose that there exists a time $T$ where we have a splat singularity, i.e., the smooth interface collapses along an arc at time $T$.

From Theorem \ref{decaimiento}, our strip of analyticity is nonzero as long as the regularity of the curve and the arc-chord condition do not fail. But at splat time $T$, the arc-chord condition blows-up, and we cannot guarantee analyticity at that time.

At this point, we transform the system to the tilde domain $\tilde{\Omega}$.

As long as the regularity of the curve and the arc-chord condition do not fail, from Theorem \ref{prop} we have
\begin{displaymath}
	\frac{d}{dt}\norm{\tilde{z}}_{RT}\le\exp C(\norm{\tilde{z}}_{RT})
	\end{displaymath}
where the constant C only depends on the initial data and 
\begin{displaymath}
	\norm{\tilde{z}}_{RT}\equiv\norm{\tilde{z}}^{2}_{H^{k}(S)}+\norm{\mathcal{F}(\tilde{z})}^{2}_{L^{\infty}(S)}+\frac{1}{m(Q^{2}\tilde{\sigma})(t)-2\lambda-\norm{g}(t)}+\sum_{l=0}^{4}\frac{1}{m(q^{l})(t)}.
\end{displaymath}
	
	Hence, we can conclude that our transformed curve $\tilde{z}$ is real-analytic into the strip $S(t)$. From the proof of Theorem \ref{decaitil}, this complex strip decays exponentially until a time that depends on the regularity of the curve and the arc-chord condition too [see equation (\ref{ht})]. 

Since in $\tilde{\Omega}$ the arc-chord condition and the regularity of the curve are bounded, the strip of analyticity is nonzero and therefore we can guarantee the analyticity at time $T$.

Thus, applying $P^{-1}$, we have that the analytic curve self-intersects along an arc, therefore we get a contradiction and hence Theorem \ref{main} is proved.
	
\bibliography{mibiblioteca}
\bibliographystyle{plain}

\end{document}